\documentclass[12pt]{article}

\usepackage{amsthm}
\usepackage{amsmath}
\usepackage{amssymb}
\input xypic
\usepackage[all,tips]{xy}
\usepackage{enumerate}
\usepackage[hidelinks]{hyperref}
\usepackage[margin=1in]{geometry}

\theoremstyle{plain}
\newtheorem{theorem}[subsubsection]{Theorem}
\newtheorem{lemma}[subsubsection]{Lemma}
\newtheorem{proposition}[subsubsection]{Proposition}
\newtheorem{corollary}[subsubsection]{Corollary}

\theoremstyle{definition}
\newtheorem{example}[subsubsection]{Example}
\newtheorem{remark}[subsubsection]{Remark}
\newtheorem{defin}[subsubsection]{Definition}

\newcommand{\bth}{\begin{thm}}
\renewcommand{\eth}{\end{thm}}
\newcommand{\bpr}{\begin{proposition}}
\newcommand{\epr}{\end{proposition}}
\newcommand{\ble}{\begin{lemma}}
\newcommand{\ele}{\end{lemma}}
\newcommand{\bco}{\begin{corollary}}
\newcommand{\eco}{\end{cororllary}}
\newcommand{\bdf}{\begin{defin}}
\newcommand{\edf}{\end{defin}}
\newcommand{\bex}{\begin{example}}
\newcommand{\eex}{\end{example}}
\newcommand{\bre}{\begin{remark}}
\newcommand{\ere}{\end{remark}}
\newcommand{\bcj}{\begin{conj}}
\newcommand{\ecj}{\end{conj}}

\newcommand{\beq}{\begin{equation}}
\newcommand{\eeq}{\end{equation}}

\newcommand{\ot}{{\otimes}}
\newcommand{\op}{{\oplus}}
\newcommand{\lb}{\label}

\newcommand{\bpf}{\begin{proof}}
\newcommand{\epf}{\end{proof}}

\newcommand{\bQ}{\mathbb{Q}}

\newcommand{\C}{\mathcal{C}}
\newcommand{\Z}{\mathcal{Z}}
\newcommand{\D}{\mathcal{D}}

\newcommand{\bZ}{{\mathbb Z}}

\newcommand{\G}{\mathcal{G}}

\newcommand{\Rep}{\mathcal{R}{\it ep}}

\newcommand{\Vect}{\mathcal{V}{\it ect}}
\newcommand{\Aut}{{Aut}}
\renewcommand{\d}{\partial}
\newcommand{\Hom}{{Hom}}
\newcommand{\V}{\mathcal{V}}
\newcommand{\Ex}{\mathcal{E}xt}
\newcommand{\Ext}{{Ext}}
\newcommand{\Map}{{Map}}
\newcommand{\Lex}{\mathcal{L}ex}
\newcommand{\Mex}{\mathcal{M}ex}
\renewcommand{\S}{\mathcal{S}}
\newcommand{\Id}{{Id}}

\newcommand{\Inv}{{Inv}}
\renewcommand{\dim}{{dim}}
\newcommand{\onehalf}{{\tiny \frac{1}{2}}}
\newcommand{\void}[1]{}

\everymath{\displaystyle}

\begin{document}

\title{Third Cohomology and Fusion Categories}
\author{Alexei Davydov and Darren A. Simmons}
\maketitle

\begin{abstract}
It was observed recently that for a fixed finite group $G$, the set of all Drinfeld centres of $G$ twisted by 3-cocycles form a group, the so-called group of {\em modular extensions} (of the representation category of $G$), which is isomorphic to the third cohomology group of $G$. We show that for an abelian $G$,  pointed twisted Drinfeld centres of $G$ form a subgroup of the group of modular extensions. We identify this subgroup with a group of quadratic extensions containing $G$ as a Lagrangian subgroup, the so-called group of {\em Lagrangian extensions} of $G$.  We compute the group of Lagrangian extensions, thereby providing an interpretation of the internal structure of the third cohomology group of an abelian $G$ in terms of fusion categories.  Our computations also allow us to describe associators of Lagrangian algebra in pointed braided fusion categories.
\end{abstract}

\tableofcontents

\section{Introduction}

In this paper, we are interested in a special class of monoidal categories---the class of fusion categories, i.e., $k$-linear, semi-simple, rigid monoidal categories with finitely many simple objects and finite-dimensional spaces of morphisms and such that the endomorphism algebra of the unit object coincides with the ground field $k$.  Due to the applications in representation theory, theoretical physics, and quantum computing the theory of fusion categories is experiencing a period of rapid development (see \cite{egno} and references therein).

Relations between low-dimensional group cohomology and monoidal categories in general (and fusion categories in particular) have a long history, going back to the beginning of at least the latter subject.  A model example is the fact that associativity constraints on the monoidal category of vector spaces graded by a group $G$ are nothing but 3-cocycles of $G$ with coefficients in the multiplicative group of the ground field. This observation goes back to S. Mac Lane, one of the founders of both subjects.

A more sophisticated example is the relation between constraints of a braided monoidal category on vector spaces graded by an abelian group $A$ (a so-called {\em braided pointed fusion category}) and the third abelian cohomology (see \cite{js}).  The structure of the braided monoidal category of $A$-graded vector spaces (the so-called braided pointed category) corresponds to a pair $(\alpha,c)$, where $\alpha$ is a 3-cocycle of $A$ (associativity constraint), and $c$ is a 2-cochain of $A$ (braiding). The hexagon axioms for the braiding are equivalent to certain equations on $(\alpha,c)$ known as the {\em abelian 3-cocycle condition}.  The Eilenberg--Mac Lane interpretation of the third abelian cohomology as the group of quadratic functions has a natural manifestation on the level of categories: equivalence classes of braided pointed fusion categories are labeled by pairs $(A,q)$, where $q:A\to k^*$ is the quadratic function computing the self-braiding $q(x) = c(x,x)$.

From the perspective of applications---especially in theoretical physics---the most interesting are braided fusion categories with a braiding as far from being symmetric as possible, i.e., the so-called {\em non-degenerate braided fusion categories} or {\em modular categories}.  For example, a braided pointed category is non-degenerate iff the kernel of the corresponding quadratic function is trivial. Another source of non-degenerate braided fusion categories is provided by the monoidal centre construction.  Applied to a fusion category $\S$ of a certain type, it produces a non-degenerate braided fusion category $\Z(\S)$.  The most studied examples of this type are {\em twisted Drinfeld centres}. The twisted Drinfeld centre $\Z(G,\alpha)$ of a finite group $G$ is the monoidal centre $\Z(\V(G,\alpha))$ of the category of $G$-graded vector spaces $\V(G,\alpha)$ with the associativity constraint defined by the 3-cocycle.  Recently, twisted Drinfeld centres of $G$ were interpreted as non-degenerate braided categories containing the representation category $\Rep(G)$ as the maximal symmetric subcategory. Moreover, the classes of such non-degenerate or modular extensions are shown to form a group (in \cite{lkw}), a fact which we use in this paper.

The main objects of study for us are Lagrangian algebras in non-degenerate braided categories.  The relative tensor product $\ot_R$ turns the category $\C_R$  of modules of a commutative algebra $R$ in a braided category $\C$ in to a tensor category.  The original braiding of $\C$ does not make the whole category $\C_R$ braided, it only works on a subcategory $\C_R^{loc}$ of so-called {\em local modules}.
A a commutative algebra $R$ is Lagrangian if the only local modules over $R$ are direct sums of $R$.  Lagrangian algebras play a special role in the theory of modular categories. Namely, a Lagrangian algebra $R\in\C$ allows us to identify $\C$ with the monoidal centre $\Z(\C_R)$ of the category $\C_R$ of $R$-modules in $\C$.  For example, when all simple modules over a Lagrangian algebra are invertible, the category $\C$ can be identified with a twisted Drinfeld centre 
\[
\C\ \simeq \Z(G,\alpha)\ ,
\]
where $G$ is the group of (isomorphism classes of) invertible $R$-modules and $\alpha$ is the associator of the pointed category $\C_R$.

In this paper, we examine the case of non-degenerate braided pointed categories.  In this case, all Lagrangian algebras correspond to Lagrangian subgroups of the grading group.  Moreover, all simple modules over Lagrangian algebras are invertible and the group of invertible modules is the quotient by the corresponding Lagrangian subgroup.  Thus a Lagrangian subgroup $L\subset A$ gives rise to a braided equivalence
\[
\C(A,q)\ \simeq\ \Z(A/L,\beta)\ .
\]
The primary purpose of the paper is to describe the associator $\beta\in H^3(A/L,k^*)$ as a function of $A, q$ and $L$.  We derive a formula for (a cocycle representing) it (see Appendix \ref{la}): 
\[
\beta(x,y,z) = 
\]
\[
\frac{\alpha{\left(s(x),s(y),s(z)\right)}\alpha{\left(\gamma(x,y),s(x+y),s(z)\right)}c{\left(s(x+y)+s(z),\gamma(x,y)\right)}\eta{\left(\gamma(x,y+z),\gamma(y,z)\right)}}{\alpha{\left(s(x),s(y+z),\gamma(y,z)\right)}c{\left(s(x+y),\gamma(x,y)\right)}\eta{\left(\gamma(x+y,z),\gamma(x,y)\right)}},
\]
where $s:A/L\to A$ is a section of the canonical projection, $\gamma$ is a $2$-cocycle representing the class of the abelian group extension $L\to A\to A/L$, $\eta\in C^2(L,k^*)$ is such that $\d\eta=\alpha|_L$, and $c\in C^2(A,k^*)$ is the coefficient for the braiding in the ambient category.

The above explicit expression is not very easy to use.  For example, the formula does not immediately tell when the cohomology class of the associator is nontrivial.  In order to find a more useful description of the associator, we look at the correspondence between triples $(A,q,L)$ (a non-degenerate quadratic group and a Lagrangian subgroup) and the cohomology classes in $H^3(A/L,k^*)$ from a different perspective. By fixing $L$ and varying $(A,q)$ we turn this correspondence into a homomorphism of groups. Namely we define a natural group structure on the set of isomorphism classes of Lagrangian extensions of $L$ (non-degenerate quadratic group $(A,q)$ containing $L$ as a Lagrangian subgroup). Assigning the associator to the triple $(A,q,L)$ now becomes a homomorphism from the group $Lex(L)$ of Lagrangian extensions to the third cohomology
\begin{equation}\lb{main}\tag{$*$}Lex(L)\ \to \ H^3(\widehat{L},k^*)\ .\end{equation}
Here, we use the natural identification of $A/L$ with the character group $\widehat{L}=Hom(L,k^*)$ induced by the quadratic function on $A$.  The homomorphism \eqref{main} is a convenient way of expressing the associator $\beta\in H^3(A/L,k^*)$ as a function of $A, q$ and $L$.

More precisely, according to \cite{lkw}, we can identify $H^3(\widehat{L},k^*)$ with the group of modular extensions of the representation category $\Rep(\widehat{L}) = \V(L)$. Under this identification the subgroup of pointed modular extensions corresponds to $Lex(L)$.
The homomorphism \eqref{main} fits into a short exact sequence
\[
\xymatrix{1\ar[r] & Lex(L) \ar[r] & H^3(\widehat{L},k^*)\ar[r]^(0.425){{Alt}_3}&\Hom(\Lambda^3\widehat{L},k^*)\ar[r]&1}\ ,
\]
i.e., $Lex(L)$ is the kernel of the alternation map ${Alt}_3: H^3(\widehat{L},k^*)\to\Hom(\Lambda^3\widehat{L},k^*)$. We denote by $\Hom(\Lambda^nB,M)$ the group of alternating $n$-linear forms on $B$ with values in an abelian group $M$.
Assigning to a Lagrangian extension $L\subset A$ the class of a short exact sequence $0\to L\to A\to \widehat{L}\to 1$ gives a homomorphism of groups 
\[
\phi:Lex(L)\ \to\ \Ext(\widehat{L},L)^\tau\ .
\]
Here, $\Ext(\widehat{L},L)^\tau$ is the subgroup of invariants of the involution $\tau:\Ext(\widehat{L},L)\to \Ext(\widehat{L},L)$, given by taking the dual.

%When $L$ is 2-torsion-free, the map $\phi$ is an isomorphism.  
For a general finite abelian $L$, the situation is a bit more complicated. We have an exact sequence:
$$\xymatrix{0 \ar[r] & J(L) \ar[r] & Lex(L)\ar[r]^(.45)\phi & \Ext(\widehat{L},L)^\tau\ar[r] & L_2 \ar[r] & 0}$$
The functor $J$ is an additive endofunctor on the category of finite abelian groups such that for a prime $p$
$$J(\bZ/p^l\bZ) = \left\{\begin{array}{cc}\bZ/2\bZ ,& p=2\\ 0 ,& p\ \text{odd}\end{array}\right. $$
The functor $J$ sends isomorphisms between cyclic 2-groups to the identity and non-isomorphisms to zero. \\

Altogether these give a decomposition of degree-three cohomology into natural polynomial functors of degrees three, two, and one: 
$$
\xymatrix
{
&0\ar[d]&&&\\
&J(\widehat{B})\ar[d] &&&\\
0\ar[r]&Lex(\widehat{B})\ar[d]^{\phi}\ar[r]&H^3(B,\bQ/\bZ)\ar[rr]^(0.425){{Alt}_3}&&\Hom(\Lambda^3B,\bQ/\bZ)\ar[r]&0\\
&\Ext(B,\widehat{B})^{\tau} \ar[d]\\
& (\widehat{B})_2 \ar[d]&&&\\
& 0
}
$$
%Here, we write $\widehat{L}=B$.  
Since our ground field $k$ is algebraically closed of characteristic zero, we can trade $k^*$ for the universal torsion group $\bQ/\bZ$ as the coefficient group for cohomology of a finite $B$.
Note that the answers for cohomology in degree one and two are much easier:
\[
H^1(B,\bQ/\bZ) = \widehat B,\qquad H^2(B,\bQ/\bZ) = \Hom(\Lambda^2B,\bQ/\bZ)\ .
\]
The rise in complexity in degree three reflects the more involved decomposition of the homology $H_3(B)$ in terms of natural polynomial functors (see \cite{bre}).
 
We will deal with fusion categories.  We will use the language of categorical groups  (see, e.g. \cite[Section 3.1]{js}). This language will be very helpful in comparing the groups $Lex(L)$ and $Ext(\widehat{L},L)$.  A {\em categorical group} is a monoidal category $\G$ in which all morphisms and all objects are invertible. The {\em standard invariants} of $\G$ are the zeroth and first homotopy groups $\pi_0(\G)$ and $\pi_1(\G)$;  $\pi_0(\G)$ is the group of isomorphism classes of objects of $\G$, and $\pi_1(\G)$ is the group of automorphisms of the unit object of $\G$.

\begin{remark}\lb{acg}
For any $X\in\G$, there is an isomorphism $\Aut_\G(I)\to\Aut_\G(X)$ defined as follows.  For $a\in\Aut_\G(I)$, define $\alpha\in\Aut_\G(X)$ by the diagram
\[
\xymatrix
{
I\ot X\ar[r]^{\lambda_X}\ar[d]_{a\ot 1}&X\ar[d]^{\alpha}\\
I\ot X\ar[r]_{\lambda_X}&X\\
}
\]
Here, $\lambda$ denotes the left unit isomorphism in $\G$. Let $g_X:\Aut_\G(X)\to\Aut_\G(I)$ denote the inverse of this map. Note that, for any $Y\in \G$ and any morphism $x:X\to Y$, we have $g_X(a)=g_{Y}(xax^{-1})$.
\end{remark}

Throughout this paper, we will adhere to the following notation conventions:  lower-case Latin and Greek characters are used for elements and morphisms, upper-case Latin and Greek characters for objects and functors, calligraphic Latin characters for categories and 2-functors, boldface Latin characters for 2-categories, and boldface Fraktur characters for 3-categories.

\section*{Acknowledgment}

The authors would like to thank Larry Breen, Ronnie Brown, and the anonymous referee for valuable comments. 

%%%%%%%%%%%%%%%%%%%%%

\section{Drinfeld centres of finite groups}

In this section, we follow \cite{lkw} to interpret $H^3(G,k^*)$ as the group of modular extensions of the representation category $\Rep(G)$, and then we consider pointed modular extensions of $\Rep(G)$.

\subsection{The categorical group $\Mex(\mathcal{S})$}

Recall (e.g. from \cite{dmno}) that a braided fusion category $\mathcal{D}$ is {\em non-degenerate} if its {\em symmetric centre}
\[
\Z_{sym}(\D)=\{X\in\mathcal{D}\ |\ c_{X,Y}\circ c_{Y,X}=1_{Y\ot X}\ \forall Y\in\D\}
\]
coincides with $\Vect$.  Let $\mathcal{S}$ be a symmetric fusion category.  Following \cite{lkw}, we say that a non-degenerate braided fusion category $\mathcal{D}$ containing $\S$ as a full subcategory is a {\em modular extension}\footnote{We are abusing language here; what we call a ``modular extension" would more properly be termed a ``non-degenerate braided extension".} of $\S$ if the {\em symmetric centraliser}
\begin{equation}\lb{syce}\C_\mathcal{D}(\S)=\{X\in\mathcal{D}\ |\ c_{X,Y}\circ c_{Y,X}=1_{Y\ot X}\ \forall Y\in\S\}
\end{equation} 
coincides with $\S$.

\begin{remark}\lb{cme}
A non-degenerate braided category $\mathcal{D}$ containing $\S$ as a full subcategory is a modular extension if and only if $\dim(\D) = (\dim(\S))^2$.
Indeed, $\S$ is always a subcategory of its symmetric centraliser $\C_\mathcal{D}(\S)$, and the relation between dimensions $\dim(\mathcal{D})=\dim(\S)\cdot\dim(\C_{\mathcal{D}}(\S))$ makes $\dim(\C_{\mathcal{D}}(\S))=\dim(\S)$ equivalent to the condition $\S = \C_\mathcal{D}(\S)$.
\end{remark}

Define the category $\Mex(\S)$ as follows.  Objects of $\Mex(\S)$ are modular extensions of $\S$.  Morphisms are isomorphism classes of braided equivalences preserving the subcategory $\S$, i.e., making the diagram 
\[
\xymatrix{\C \ar[rr] && \D\\ & \S \ar[ul] \ar[ur] }
\]
commute on the nose.

\begin{remark}\lb{2cg}
If we consider isomorphisms between braided equivalences of modular extensions as 2-cells, we naturally end up with a 2-category ${\mathbb M}{\bf ex}(\S)$.
\end{remark}

To describe the monoidal structure on $\Mex(\S)$, we recall some basic facts about commutative algebras in braided fusion categories.

Following \cite{dmno}, we call a commutative algebra $R$ in a braided fusion category $\D$ {\em etale} if it is indecomposable and separable.  The category $\D_R$ of (right) $R$-modules over an etale algebra $R$ is fusion with respect to the relative tensor product $\ot_R$. The categorical dimension of $\D_R$ satisfies $dim(\D) = dim(\D_r)dim(R)$.
An $R$-module $M$ is said to be {\em local} if the following diagram commutes:
\[
\xymatrix{M\otimes R  \ar[r]^\nu \ar[d]_{c_{M,R}} & M\\ R\otimes M \ar[r]_{c_{R,M}} & M\otimes R \ar[u]_\nu}
\]
The full subcategory $\D^{loc}_R$ of local right $R$-modules is braided \cite{pa}. In terms of categorical dimension $dim(\D) = dim(\D^{loc}_R)dim(R)^2$.
An algebra $R$ in a braided fusion category $\D$ is {\em Lagrangian} if any local $R$-module is a direct sum of copies of the regular module $R$, i.e. $\C^{loc}_R$ is equivalent to the category $\Vect$ of vector spaces.  For a Lagrangian $R$, the natural braided tensor functor into the monoidal centre $\D\to \Z(\D_R)$ is an equivalence. Dimension-wise, we have $dim(\D) = dim(R)^2$.

For $\mathcal{D},\mathcal{D}'\in\Mex(\S)$, the Deligne product $\S\boxtimes\S$ is a full subcategory of $\mathcal{D}\boxtimes\mathcal{D}'$.  The tensor product functor $\ot:\S\boxtimes\S\to\S$ has a two-sided adjoint $F:\S\to\S\boxtimes\S$.  This gives rise to an etale algebra $R=F(I)\in\S\boxtimes\S$, where $I\in\S$ is the monoidal unit object.  The category of local modules $(\mathcal{D}\boxtimes\mathcal{D}')_{R}^{loc}$ is a modular extension of $\S$.  Indeed, $(\S\boxtimes\S)_{R}^{loc} = (\S\boxtimes\S)_{R} = \S$, so that $(\mathcal{D}\boxtimes\mathcal{D}')_{R}^{loc}$ is an extension of $\S$. Since 
$\dim(R)=\dim(\S)$, we have $\ \dim((\mathcal{D}\boxtimes\mathcal{D}')_{R}^{loc})=\dim(\mathcal{D}\boxtimes\mathcal{D}')\dim(R)^{-2}=\dim(\S)^2$.
Define an operation 
\[
\odot_\S:\Mex(\S)\times\Mex(\S)\to\Mex(\S),\qquad \mathcal{D}\odot_{\S}\mathcal{D}'=(\mathcal{D}\boxtimes\mathcal{D}')_{R}^{loc}.
\]
The groupoid $\Mex(\S)$ is monoidal with respect to $\odot_{\S}$.  The monoidal unit is the monoidal (or Drinfeld) centre $\Z(\S)$ of $\S$, considered as a modular extension of $\S$ with respect to the natural embedding $\S\to\Z(\S)$.
Moreover, $\Mex(\S)$ is a categorical group.  Indeed, the quasi-inverse of a modular extension $\C$ with respect to $\odot_{\S}$ is $\overline\C$, i.e., the category $\C$ with the inverse braiding.

The zeroth homotopy group $\pi_0(\Mex(\S))=\text{Mex}(\S)$ was called in \cite{lkw} the group of {\em modular extensions} of $\S$.
By the definition, the first homotopy group $\pi_1(\Mex(\S))$ coincides with the group $\Aut_{br}(\Z(\S)/\S)$ of isomorphism classes of braided tensor autoequivalences of $\Z(\S)$ fixing objects of $\S$ on the nose.

\begin{remark}\lb{2hg}
The 2-category ${\mathbb M}{\bf ex}(\S)$ from remark \ref{2cg} is clearly a 2-categorical group.  The standard invariants $\pi_0$ and $\pi_1$ of ${\mathbb M}{\bf ex}(\S)$ are the same as for $\Mex(\S)$.  It follows from the definition that  $\pi_2(\mathbb{M}{\bf ex}(\S)) ={\ker}{\left(\Aut_\ot(Id_{\Z(\S)}) \to \Aut_\ot(Id_\S)\right)}$.
\end{remark}

We will be interested in the case when the base symmetric category $\S$ is the category $\Rep(G)$ of finite-dimensional representations of a finite group $G$.  Modular extensions of $\Rep(G)$ turn out to be twisted Drinfeld centres of $G$, which we are going to describe now.

Denote by $\V(G,\alpha)$ the category of $G$-graded vector spaces with the usual graded tensor product and associativity twisted by the 3-cocycle $\alpha\in Z^3(G,k^*)$: 
\begin{equation}\lb{ass}
\alpha_{V,U,W}(v\ot(u\ot w)) = \alpha(x,y,z)(v\ot u)\ot w,\qquad v\in V_x,\ u\in U_y,\ w\in W_w\ .
\end{equation}

An {\em $\alpha$-projective $G$-action} ({\em cf.} \cite[\textsection\ 2.1]{das1}) on a $G$-graded vector space $V$ is a collection of automorphisms 
$x:V\to V,\ v\mapsto x.v$ such that $x(V_y) = V_{xyx^{-1}}$, and
\begin{equation}\label{pa}
(xy).v = \alpha(x,y|z)x.(y.v),\qquad  v\in V_z\ .
\end{equation}
Here,
\[
\alpha(x,y|z) = \frac{\alpha(x,yzy^{-1},y)}{\alpha(x,y,z)\alpha(xyz(xy)^{-1},x,y)}
\]
Similarly, define
\[
\alpha(x|y,z) = \frac{\alpha(x,y,z)\alpha(xyx^{-1},xzx^{-1},x)}{\alpha(xyx^{-1},x,z)}\ .
\]
The following identities follow directly from the 3-cocycle condition for normalized $\alpha$:
\begin{align}\lb{pca}\begin{split}
\alpha(x,yz|w)\alpha(y,z|w) & =  \alpha(xy,z|w)\alpha(x,y|zwz^{-1})\ , \\
\alpha(xy|z,w)\alpha(x,y|z)\alpha(x,y|w) & =  \alpha(x,y|zw)\alpha(y|z,w)\alpha(x|yzy^{-1},ywy^{-1})\ ,\\
\alpha(y,z,w)\alpha(x|yz,w)\alpha(x|y,z) & =  \alpha(x|y,zw)\alpha(x|z,w)\alpha(xyx^{-1},xzx^{-1},xwx^{-1})\ .
\end{split}
\end{align}
Define the twisted Drinfeld centre $\Z(G,\alpha)$ as follows. Objects of $\Z(G,\alpha)$ are $G$-graded vector spaces  together with $\alpha$-projective $G$-action. Morphisms are grading and action-preserving homomorphisms of vector spaces.
The tensor product in $\Z(G,\alpha)$ is the tensor product of $G$-graded vector spaces, with $\alpha$-projective $G$-action defined by
\begin{equation}\label{tp}
x.(u\otimes v) = \alpha(x|y,z)(x.u\otimes x.v),\quad u\in U_x,\ v\in V_y\ .
\end{equation}
The associativity is given by \eqref{ass}.  The monoidal unit is $I=I_e=k$ with trivial $G$-action.  The braiding is given by
\begin{equation}\label{br}
c_{U,V}(u\otimes v) = x.v\otimes u,\quad u\in U_x, v\in V\ .
\end{equation}

Note that $\Z(G,\alpha)$ is equivalent to the monoidal centre $\Z(\V(G,\alpha))$ (see \cite{das1} for details).  Note also that $\Rep(G)$ is a full symmetric subcategory of $\Z(G,\alpha)$ (consisting of trivially graded objects).  Moreover, $\dim(\Z(G,\alpha)) = |G|^2 = (\dim(\Rep(G))^2$, so $\Z(G,\alpha)$ is a modular extension of $\Rep(G)$ by remark \ref{cme}.

The following was proved in \cite[Theorem 4.22]{lkw}. We add (a sketch of) a proof here.  
\begin{proposition}\lb{p0m}
The assignment $\alpha \mapsto \Z(G,\alpha)$ gives an isomorphism 
\[
H^3(G,k^*)\ \to\ {Mex}(\Rep(G))\ .
\] 
\end{proposition}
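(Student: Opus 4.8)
The plan is to show that $\alpha\mapsto\Z(G,\alpha)$ is a well-defined map $H^3(G,k^*)\to\mathrm{Mex}(\Rep(G))$, that it is a group homomorphism with respect to the Baer-type operation $\odot_{\Rep(G)}$ on modular extensions, and that it is bijective. The well-definedness and the fact that each $\Z(G,\alpha)$ lands in $\mathrm{Mex}(\Rep(G))$ have essentially been established already: by the remarks preceding the statement, $\Z(G,\alpha)$ is a non-degenerate braided category containing $\Rep(G)$ as a full symmetric subcategory with $\dim\Z(G,\alpha)=|G|^2=(\dim\Rep(G))^2$, so by Remark \ref{cme} it is a modular extension. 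Thus the first task is only to check that cohomologous cocycles give braided equivalences preserving $\Rep(G)$: if $\alpha'=\alpha\cdot\d\mu$ for a 2-cochain $\mu\in C^2(G,k^*)$, then one builds an explicit braided equivalence $\Z(G,\alpha)\to\Z(G,\alpha')$ that is the identity on underlying graded vector spaces but twists the monoidal and action structures by $\mu$; one verifies it is the identity on the trivially graded subcategory $\Rep(G)$.

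Next I would establish injectivity. Suppose $\Z(G,\alpha)\simeq\Z(G,\alpha')$ as modular extensions of $\Rep(G)$, i.e. via a braided equivalence $F$ fixing $\Rep(G)$ on the nose. The simple objects of $\Z(G,\alpha)$ with one-dimensional underlying space and grading concentrated at the identity are exactly $\Rep(G)$'s simples; the grading and the $\alpha$-projective action are recoverable from the braiding and the tensor structure. I would argue that such an $F$ must respect the $G$-grading and therefore, after tracking what it does to the associativity and action constraints, forces $\alpha$ and $\alpha'$ to differ by a coboundary. Concretely, the obstruction to extending a grading-preserving linear equivalence to a monoidal one is measured by $H^3(G,k^*)$, and compatibility with the existing constraints pins the class of $\alpha'/\alpha$ to be trivial.

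For surjectivity I would invoke the structural fact, recalled in the discussion of Lagrangian algebras preceding the statement, that any modular extension $\C$ of $\Rep(G)$ carries a canonical Lagrangian algebra, namely the image $R=F(I)$ of the regular-representation/function algebra coming from the adjoint to the forgetful-type functor; the category of $R$-modules $\C_R$ is then pointed (all simple modules invertible) precisely because $\C$ is a modular extension of $\Rep(G)=\V(G)$, and the group of invertible $R$-modules is identified with $G$. The associator of the resulting pointed category $\C_R$ is a 3-cocycle $\alpha\in Z^3(G,k^*)$, and reconstructing $\C$ from its Lagrangian algebra via $\C\simeq\Z(\C_R)=\Z(G,\alpha)$ exhibits the preimage. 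The surjectivity argument is the step I expect to be the main obstacle, since it relies on the nontrivial Lagrangian-algebra reconstruction and on verifying that for $\S=\Rep(G)$ the module category is genuinely pointed; this is where the cited machinery of \cite{lkw,dmno} does the heavy lifting.

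Finally I would check that the bijection is a homomorphism: under $\odot_{\Rep(G)}$ the product $\Z(G,\alpha)\odot_{\Rep(G)}\Z(G,\beta)$ is computed as a category of local modules over the canonical etale algebra in $\Rep(G)\boxtimes\Rep(G)$, and unwinding this on the pointed side corresponds to multiplying the associator cocycles, matching the group operation $[\alpha][\beta]=[\alpha\beta]$ in $H^3(G,k^*)$. Here I would use that the unit $\Z(\Rep(G))=\Z(G,1)$ corresponds to the trivial class and that the quasi-inverse $\overline{\Z(G,\alpha)}$ corresponds to $[\alpha]^{-1}$, so that the map is compatible with inverses as well. Since the whole correspondence is natural in the constraints, these verifications reduce to cocycle bookkeeping rather than new conceptual input.
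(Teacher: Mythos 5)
Your proposal follows essentially the same route as the paper: the homomorphism property is obtained from local modules over the canonical etale algebra in $\Rep(G)\boxtimes\Rep(G)$ (the paper makes this concrete via $\Z(G,\alpha)\boxtimes\Z(G,\beta)\simeq\Z(G\times G,\alpha\times\beta)$ and the function algebra $k\left((G\times G)/\delta(G)\right)$), and surjectivity comes from the canonical Lagrangian algebra $k(G)$ in any modular extension $\D$ together with the reconstruction $\D\simeq\Z(\D_{k(G)})$; the paper, like you, treats injectivity/well-definedness as essentially cocycle bookkeeping. One caveat: your justification that $\C_R$ is pointed ``precisely because $\C$ is a modular extension of $\Rep(G)=\V(G)$'' rests on an identification that is false for nonabelian $G$; the paper's actual argument is that $\Aut_{alg}(k(G))=G$ makes $\D_{k(G)}$ into a $G$-graded category $\op_{g\in G}\D_{k(G)}^{g\text{-}loc}$ of $g$-local modules, each piece trivialized because $k(G)$ is Lagrangian, which is exactly the Kirillov-type machinery you correctly anticipate must do the heavy lifting at this step.
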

\begin{proof}
The homomorphism property of the assignment is established by a braided equivalence
\[
\Z(G,\alpha)\odot_{\Rep(G)}\Z(G,\beta)\ \to\ \Z(G,\alpha\beta)\ .
\]
Note that $\Z(G,\alpha)\boxtimes\Z(G,\beta)\simeq\Z(G\times G,\alpha\times\beta)$ as braided fusion categories.
Let $\delta:G\to G\times G$ be the diagonal embedding.
The tensor product functor $\ot:\Rep(G)\boxtimes\Rep(G)\to \Rep(G)$ corresponds to the inverse image functor $\delta^*:\Rep(G\times G)\to\Rep(G)$ 
upon the identification $Rep(G)\boxtimes\Rep(G) = \Rep(G\times G)$. By Frobenius reciprocity its adjoint $\delta_*$ is the induction with respect to $\delta$. In particular, $\delta_*(k)$ is the function algebra $k{\left((G\times G)/\delta(G)\right)}\in\Z(G\times G,\alpha\times\beta)$.  By \cite[Theorem 3.7]{das1}, its category of local modules is equivalent to $\Z(G,\alpha\beta)$.
Let now $\D$ be a modular extension of $\Rep(G)$.  The function algebra $A=k(G)$ with the regular $G$-action is etale in $\Rep(G)$ and in $\D$. Since $\dim(A)=\dim(\Rep(G))$, the algebra $A$ is Lagrangian.  Thus $\D\simeq \Z(\D_A)$. Moreover, the group of algebra automorphisms is $\Aut_{alg}(A)=G$. Hence the category of modules $\D_A$ is $G$-graded $\op_{g\in G}\D_A^{g\text{-}loc}$ into $g$-local modules and as the result is tensor equivalent to $\V(G,\alpha)$ for some $\alpha\in Z^3(G,k^*)$ (see \cite{ki} for details). Thus we have $\D=\Z(\D_A)\simeq\Z(\V(G,\alpha))\simeq\Z(G,\alpha)$ which shows that the assignment of the proposition is bijective.
\end{proof}

The first homotopy group $\pi_1(\Mex(\Rep(G)))$ also has a cohomological description.
\begin{lemma}\lb{p1m}
$\pi_1(\Mex(\Rep(G))) \simeq H^2(G,k^*)$.
\end{lemma}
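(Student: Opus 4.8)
The plan is to compute $\pi_1(\Mex(\Rep(G)))$ directly through its description recorded above, namely $\pi_1(\Mex(\Rep(G))) = \Aut_{br}(\Z(G,1)/\Rep(G))$, the group of isomorphism classes of braided tensor autoequivalences of the untwisted centre $\Z(G,1) = \Z(\Rep(G))$ that fix the objects of $\Rep(G)$ on the nose. I would build the isomorphism with $H^2(G,k^*)$ by restricting such autoequivalences to modules over the Lagrangian function algebra used in the proof of Proposition \ref{p0m}.

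First, recall from that proof that the function algebra $A = k(G)$, with its regular $G$-action, lies in $\Rep(G)$ and is Lagrangian in $\Z(G,1)$, so that $\Z(G,1)_A$ is tensor equivalent to $\V(G,1)$ and $\Z(G,1) \simeq \Z(\V(G,1))$. Any $F \in \Aut_{br}(\Z(G,1)/\Rep(G))$ fixes $A$ together with its multiplication on the nose, since both lie in $\Rep(G)$; hence $F$ carries $A$-modules to $A$-modules and descends to a tensor autoequivalence $\bar F$ of $\Z(G,1)_A \simeq \V(G,1)$. This produces a homomorphism $\rho : \Aut_{br}(\Z(G,1)/\Rep(G)) \to \Aut_\ot(\V(G,1))$.

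Next I would pin down the image. The $G$-grading on $\V(G,1) = \Z(G,1)_A$ is detected by the double braiding of $\Z(G,1)$ against $\Rep(G)$, and since $F$ fixes $\Rep(G)$ this forces $\bar F$ to induce the identity on the grading group $\pi_0(\V(G,1)) = G$ and on $\pi_1 = k^*$. A tensor autoequivalence of $\V(G,1)$ that is the identity on $\pi_0$ and $\pi_1$ is, up to monoidal natural isomorphism, a monoidal structure on the identity functor, i.e.\ a scalar $J(g,h)$ on $k_g \ot k_h$ satisfying the $2$-cocycle identity, with two such monoidally isomorphic exactly when they differ by a coboundary. Thus this subgroup of $\Aut_\ot(\V(G,1))$ is canonically $H^2(G,k^*)$, and it remains to show that $\rho$ is an isomorphism onto it.

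The bijectivity of $\rho$ is the heart of the argument, and surjectivity is the main obstacle. For injectivity, I would use the equivalence $\Z(G,1) \simeq \Z(\Z(G,1)_A)$: if $\bar F$ is monoidally isomorphic to the identity, then $F$, being recovered from $\bar F$ on centres, is isomorphic to the identity and so trivial in $\pi_1$. For surjectivity, I would invoke the functorial passage $\Aut_\ot(\V(G,1)) \to \Aut_{br}(\Z(\V(G,1)))$ of autoequivalences to their induced braided autoequivalences of the centre, and check that an autoequivalence inducing the identity on $\pi_0 = G$ gives a braided autoequivalence fixing $\Rep(G)$; equivalently, one realizes a cocycle $\mu \in Z^2(G,k^*)$ concretely by $\mu$-twisting the $G$-action, $x\cdot_\mu v = \bigl(\mu(x,g)/\mu(xgx^{-1},x)\bigr)\, x.v$ for $v$ in grade $g$, together with the induced tensor structure, and verifies that this functor is braided and fixes $\Rep(G)$. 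The delicate point throughout is to match the two notions of equivalence --- monoidal natural isomorphism of tensor autoequivalences of $\V(G,1)$ on one side and isomorphism of braided autoequivalences of $\Z(G,1)$ on the other --- so that coboundaries, and only coboundaries, are sent to the trivial element of $\pi_1$; this is exactly what the centre equivalence furnished by the Lagrangian algebra $A$ guarantees.
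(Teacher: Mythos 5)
Your proposal is correct and follows essentially the same route as the paper: the same restriction homomorphism $\Aut_{br}(\Z(\Rep(G))/\Rep(G))\to\Aut_{\ot}(\V(G))$ obtained by passing to modules over the Lagrangian function algebra $k(G)$, with image the soft autoequivalences $\Aut_{\ot}^{1}(\V(G))\simeq H^2(G,k^*)$. The only difference is one of self-containedness: where the paper simply cites \cite[Corollary 6.9]{nr} for bijectivity onto the soft autoequivalences and \cite[Proposition 2.5]{da} for $\Aut_{\ot}^{1}(\V(G))\simeq H^2(G,k^*)$, you sketch direct arguments for these steps (double braiding against $\Rep(G)$ to pin down the image, functoriality of the centre for injectivity, and the explicit $\mu$-twisted action for surjectivity), all of which are sound in outline.
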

\begin{proof}
$\pi_1(\Mex(\Rep(G)))$ is the group  $\Aut_{br}(\Z(\Rep(G))/\Rep(G))$ of braided autoequivalences of the monoidal centre $\Z(\Rep(G))$ that fix the full subcategory $\Rep(G)\subseteq\Z(\Rep(G))$ on the nose.  The monoidal centre $\Z(\Rep(G))$ is equivalent, as a braided monoidal category, to the untwisted Drinfeld centre $\Z(G)$.  The algebra of functions $k(G)\in \Rep(G)$ is an etale algebra in $\Z(G)$; its category of right modules $\Z(G)_{k(G)}$ is equivalent to the category $\V(G)$ of $G$-graded vector spaces.  This equivalence gives rise to a homomorphism  $\Aut_{br}(\Z(\Rep(G))/\Rep(G))\to \Aut_{\ot}(\V(G))$.  It follows from \cite[Corollary 6.9]{nr} that it is an isomorphism with the group $\Aut_{\ot}^{1}(\V(G))$ of isomorphism classes of tensor structures on the identity functor (the so-called {\em soft} autoequivalences). The group $\Aut_{\ot}^1(\V(G))$ is, in turn, isomorphic to $H^2(G,k^*)$ by \cite[Proposition 2.5]{da}.
\end{proof}

\begin{remark}
The group $\pi_2({\mathbb M}{\bf ex}(\Rep(G)))$ is isomorphic to the first cohomology group $H^1(G,k^*)$.  Indeed, according to remark \ref{2hg}, the group $\pi_2({\mathbb M}{\bf ex}(\Rep(G)))$ coincides with $\ker(\Aut_\ot(Id_{\Z(\Rep(G))}) \to \Aut_\ot(Id_{\Rep(G)}))$.\\
As was pointed out to us by L. Breen, the 2-categorical group ${\mathbb M}{\bf ex}(\Rep(G))$ is the truncation of a 3-categorical group ${\bf\mathfrak{Mex}}(\Rep(G))$, with the third homotopy group $\pi_3({\bf\mathfrak{Mex}}(\Rep(G)))$ coinciding with the zeroth cohomology group $H^0(G,k^*)$.
\end{remark}

We end this section with a crossed-module representing the categorical group $\Mex(\Rep(G))$. Denote by $(C^*(G,k^*),\partial)$ the standard complex computing the cohomology of $G$ with coefficients in $k^*$.
\begin{lemma}
The categorical group $\G$ associated with the crossed-module 
\[
\xymatrix{Z^3(G,k^*) & C^2(G,k^*)/B^2(G,k^*) \ar[l]_(.6)\partial}
\]
is equivalent to $\Mex(\Rep(G))$.
\end{lemma}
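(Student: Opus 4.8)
The plan is to produce the equivalence by realizing $\Mex(\Rep(G))$ through an explicit monoidal functor out of $\G$, rather than by comparing Postnikov invariants abstractly. Recall that the categorical group $\G$ determined by the crossed module $\partial:C^2(G,k^*)/B^2(G,k^*)\to Z^3(G,k^*)$ has as objects the $3$-cocycles $\alpha\in Z^3(G,k^*)$, and as morphisms $\alpha\to\alpha'$ the classes $[c]\in C^2(G,k^*)/B^2(G,k^*)$ with $\partial c=\alpha'\alpha^{-1}$; composition multiplies cochains and the tensor product multiplies cocycles pointwise. Its standard invariants are $\pi_0(\G)=\mathrm{coker}(\partial)=H^3(G,k^*)$ and $\pi_1(\G)=\ker(\partial)=H^2(G,k^*)$, with $\pi_0$ acting trivially on $\pi_1$.

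First I would define a functor $F:\G\to\Mex(\Rep(G))$ on objects by $F(\alpha)=\Z(G,\alpha)$, which lands in modular extensions of $\Rep(G)$ as noted before Proposition \ref{p0m}. On a morphism $[c]:\alpha\to\alpha'$ I would take the braided equivalence $F_c:\Z(G,\alpha)\to\Z(G,\alpha')$ that is the identity on underlying graded vector spaces and on the $G$-action, but whose tensor structure is twisted by $c$; the condition $\partial c=\alpha'\alpha^{-1}$ is exactly what makes the associativity constraints \eqref{ass} agree, and one checks from \eqref{br} that the braiding is preserved and that $\Rep(G)$ (the trivially graded part) is fixed on the nose. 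Replacing $c$ by $c\,\partial b$ for a $1$-cochain $b$ changes $F_c$ by the natural isomorphism induced by $b$, so $[F_c]$ depends only on $[c]$; this is precisely why the morphism group is $C^2/B^2$. Functoriality is then the statement that $F_{cc'}$ and $F_{c'}\circ F_c$ agree, which is immediate from pointwise multiplicativity.

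Next I would upgrade $F$ to a monoidal functor. The required coherence isomorphisms $F(\alpha)\odot_{\Rep(G)}F(\beta)\xrightarrow{\ \sim\ }F(\alpha\beta)$ are exactly the braided equivalences $\Z(G,\alpha)\odot_{\Rep(G)}\Z(G,\beta)\to\Z(G,\alpha\beta)$ constructed in the proof of Proposition \ref{p0m}, and the unit constraint identifies $F(1)$ with $\Z(\Rep(G))$. Verifying that these satisfy the pentagon coherence for $F$, and that they are natural with respect to the twisting morphisms $F_c$, is the technical heart of the argument; I expect it to reduce to a cocycle identity relating the $2$-cochain $c$ to the induction (function-algebra) construction used in Proposition \ref{p0m}, and I expect this compatibility to be the main obstacle.

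Finally, a monoidal functor between categorical groups is an equivalence as soon as it induces isomorphisms on $\pi_0$ and $\pi_1$: nonempty hom-sets are torsors over $\pi_1$ via Remark \ref{acg}, so a $\pi_1$-iso together with a $\pi_0$-iso forces full faithfulness and essential surjectivity. The map induced by $F$ on $\pi_0$ is $\alpha\mapsto\Z(G,\alpha)$, an isomorphism $H^3(G,k^*)\to\mathrm{Mex}(\Rep(G))$ by Proposition \ref{p0m}. On $\pi_1$, the cocycles $c$ with $\partial c=1$ are sent to the soft autoequivalences of $\Z(G,\alpha)$ obtained by twisting the tensor structure of the identity functor; under the identification of Lemma \ref{p1m} these are exactly the braided autoequivalences classified by $H^2(G,k^*)$, so $F$ realizes $\pi_1(\G)=H^2(G,k^*)\xrightarrow{\ \sim\ }\pi_1(\Mex(\Rep(G)))$. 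Hence $F$ is an equivalence of categorical groups, and because $F$ is monoidal the comparison of associators (equivalently, of the $k$-invariant in $H^3(H^3(G,k^*),H^2(G,k^*))$) is automatic, so no separate Postnikov computation is required.
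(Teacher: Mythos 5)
Your proposal is essentially the paper's own proof: the paper defines $F(\alpha)=\Z(G,\alpha)$, takes $F(c)$ to be the centre $\Z(Id(c))$ of the identity functor with tensor structure twisted by $c$ (which is exactly your $F_c$ under the identification $\Z(G,\alpha)\simeq\Z(\V(G,\alpha))$), and concludes the equivalence from the bijectivity on $\pi_0$ and $\pi_1$ supplied by Proposition \ref{p0m} and Lemma \ref{p1m}. The monoidal coherence compatibilities that you flag as the technical heart are not spelled out in the paper either (they are implicit in the braided equivalence $\Z(G,\alpha)\odot_{\Rep(G)}\Z(G,\beta)\to\Z(G,\alpha\beta)$ from Proposition \ref{p0m}), so your account is, if anything, more explicit about what remains to be checked.
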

\begin{proof}
Objects of the categorical group $\G$ are elements of $Z^3(G,k^*)$.  A morphism between $\alpha,\beta\in Z^3(G,k^*)$ is a $2$-cochain $c\in C^2(G,k^*)/B^2(G,k^*)$ such that $\partial(c)=\alpha\cdot\beta^{-1}$.  
Define a functor $F:\G\to\Mex(\Rep(G))$ by $F(\alpha)=\Z(G,\alpha)$.
For a morphism $c:\alpha\to\beta$ define a braided tensor functor $Id(c):\V(G,\alpha)\to \V(G,\beta)$, which is the identity functor with the tensor structure given by the 2-cochain $c$.
Define $F(c):\Z(G,\alpha)\to\Z(G,\beta)$ to be the functor $\Z(Id(c)):\Z(\V(G,\alpha))\to \Z(\V(G,\beta))$ induced by $Id(c)$.  By proposition \ref{p0m} and lemma \ref{p1m}, the effects of $F$ on both $\pi_0$ and $\pi_1$ are bijective. Hence $F$ is an equivalence of categorical groups.
\end{proof}

\begin{remark}
Similarly ${\mathbb M}{\bf ex}(\Rep(G))$ corresponds to the crossed-complex
\[
\xymatrix{Z^3(G,k^*) & C^2(G,k^*) \ar[l]_(.5)\partial & C^1(G,k^*) \ar[l]_(.5)\partial}\ .
\]
\end{remark}

%%%%%%%%%%%%%%%%%%%%%

\subsection{Invertible objects of $\Z(G,\alpha)$}

Here we determine when the category $\Z(G,\alpha)$ is pointed, i.e., when all simple objects of $\Z(G,\alpha)$ are invertible.

Denote 
\[
A(G,\alpha)=\left\{\left.(z,c)\in Z(G)\times C^{1}(G,k^*)\ \right|c(x)c(y) = \alpha(x,y|z)c(xy)\quad \forall x,y\in G\right\}\ .
\]
It follows from identities \eqref{pca}  that the operation 
\begin{equation}\lb{mp}(z,c)(w,d)=\left(zw,cd\alpha(-|z,w)\right)\end{equation}
makes $A(G,\alpha)$ a group. 

\begin{proposition}\label{Picard}
$\Inv(\Z(G,\alpha))\simeq A(G,\alpha)$.
\end{proposition}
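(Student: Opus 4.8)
The goal is to exhibit a mutually inverse pair of maps between $\Inv(\Z(G,\alpha))$, the group of isomorphism classes of invertible objects of the twisted centre, and the group $A(G,\alpha)$ defined above. The plan is to first understand which objects of $\Z(G,\alpha)$ are invertible. An object is a $G$-graded vector space $V$ equipped with an $\alpha$-projective action. Since the tensor product multiplies dimensions and respects the grading, an invertible object must be one-dimensional and concentrated in a single degree $z\in G$; moreover invertibility of the tensor product forces $z$ to be central, so $z\in Z(G)$. Writing $V=k$ placed in degree $z$, the $\alpha$-projective action \eqref{pa} is determined by scalars $c(x)\in k^*$ via $x.v = c(x)v$, and the condition \eqref{pa} becomes exactly the identity $c(x)c(y)=\alpha(x,y|z)c(xy)$ appearing in the definition of $A(G,\alpha)$. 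This is the content of the forward map: send the invertible object $(z,c)$ to the pair $(z,c)\in A(G,\alpha)$.

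Next I would check that this assignment is well defined on isomorphism classes and is a bijection onto $A(G,\alpha)$. A morphism between two such one-dimensional objects is an action-preserving, grade-preserving scalar, so two pairs $(z,c)$ and $(z',c')$ give isomorphic objects precisely when $z=z'$ and $c=c'$; hence the map is injective, and it is surjective by construction since every element of $A(G,\alpha)$ arises this way. The remaining work is to verify that the map is a group homomorphism, i.e. that the tensor product of the object labelled $(z,c)$ with the object labelled $(w,d)$ carries the pair given by the multiplication rule \eqref{mp}. This follows directly from the formula \eqref{tp} for the $\alpha$-projective action on a tensor product: the grade of the product is $zw$, and the action scalar on $u\ot v$ is $\alpha(x|z,w)c(x)d(x)$, which is precisely $cd\,\alpha(-|z,w)$ evaluated at $x$. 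I would also confirm the monoidal unit $I=I_e$ with trivial action corresponds to the identity element of $A(G,\alpha)$, so that the homomorphism respects units.

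The main technical obstacle I expect is the bookkeeping needed to see that the multiplication \eqref{mp} is genuinely associative and that $(z,c)(w,d)\in A(G,\alpha)$ again, i.e. that the new cochain $cd\,\alpha(-|z,w)$ satisfies the defining cocycle-type relation with respect to the central element $zw$. This is exactly where the identities \eqref{pca}---specifically the relation governing $\alpha(x|y,z)$ and its interaction with $\alpha(x,y|z)$---are needed; the paper has already asserted that \eqref{pca} makes $A(G,\alpha)$ a group, so I can invoke that and reduce the homomorphism claim to matching scalars degree by degree. The only subtlety is tracking the central-element constraints ($z,w$ central, so $xzx^{-1}=z$ etc.) to simplify the mixed bracket symbols $\alpha(x,y|z)$ and $\alpha(x|z,w)$; once those simplifications are applied, the comparison of action scalars on $u\ot v$ is a routine, if slightly tedious, verification rather than a conceptual difficulty. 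I would organise the proof so that the conceptual step (invertible $\Rightarrow$ one-dimensional and centrally graded) comes first, then the identification of the extra datum with the $1$-cochain $c$, and finally the homomorphism check via \eqref{tp} and \eqref{mp}.
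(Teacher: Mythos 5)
Your proposal is correct and follows essentially the same route as the paper's (sketched) proof: invertible objects are one-dimensional and concentrated in a single central degree, the $\alpha$-projective action on such an object is a $1$-cochain $c$ satisfying $c(x)c(y)=\alpha(x,y|z)c(xy)$, and the tensor product formula \eqref{tp} matches the multiplication \eqref{mp}. One justification needs fixing: the centrality of the degree $z$ is \emph{not} forced by invertibility of the tensor product (the forgetful image in $\V(G,\alpha)$ of a one-dimensional object in any degree $g\in G$ is invertible there); rather, it follows from the $\alpha$-projective action itself, since the compatibility $x(V_y)=V_{xyx^{-1}}$ applied to an object supported on the single degree $z$ forces $xzx^{-1}=z$ for all $x\in G$, i.e. $z\in Z(G)$. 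With that one-line repair, the rest of your argument (the isomorphism-class bookkeeping, the homomorphism check via \eqref{tp}, and the appeal to the identities \eqref{pca} for the group structure on $A(G,\alpha)$) goes through as written.
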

\begin{proof}
As a $G$-graded vector space, an invertible object of $\Inv(\Z(G,\alpha))$ has to be one dimensional and concentrated in a single degree $z\in Z(G)$.  The $\alpha$-projective $G$-action \eqref{pa} on such object is given by multiplication by $c\in C^1(G,k^*)$, satisfying $c(x)c(y) = \alpha(x,y|z)c(xy)$. 
The tensor product \eqref{tp} in $\Z(G,\alpha)$ corresponds to \eqref{mp}.
\end{proof}

\begin{remark}
Recall from \cite{gjs} that the group of isomorphism classes of invertible objects $\Inv(\Z(\C))$ of the monoidal centre $\Z(\C)$ fits into an exact sequence
\[
1\longrightarrow\Hom(\Aut_\ot(Id_\C),k^*)\longrightarrow\Inv(\Z(\C))\longrightarrow\Inv(\C)
\]
For the Drinfeld centre $Z(G,\alpha)$, this takes the form
\[
1\longrightarrow\widehat{G}\overset{\iota}{\longrightarrow}A(G,\alpha)\overset{\pi}{\longrightarrow}Z_\alpha(G)\longrightarrow 1\ ,
\]
where $Z_\alpha(G)$ is the following subgroup of the centre of $G$:
\[
Z_{\alpha}(G)=\left\{z\in Z(G)\left|\ \exists c\in C^1(G,k^*)\ \text{such that}\ c(x)c(y)=\alpha(x,y|z)c(xy)\ \forall x,y\in G\right.\right\}\ .
\]
\end{remark}

We call a $3$-cocycle $\alpha\in Z^3(G,k^*)$ {\em soft} if, for any $g\in G$, there is a $c\in C^1(G,k^*)$ such that $c(x)c(y) = \alpha(x,y|g)c(xy)$ for all $x,y\in G$.

A fusion category $\C$ is {\em pointed} if every simple object of $\C$ is invertible. 

\begin{corollary}\lb{zga}
The category $\Z(G,\alpha)$ is pointed if and only if $G$ is abelian and $\alpha$ is soft.
\end{corollary}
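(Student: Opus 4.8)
The plan is to reduce pointedness of $\Z(G,\alpha)$ to a numerical equality and then evaluate the order of its group of invertible objects via Proposition \ref{Picard} and the exact sequence of the preceding remark. As noted above, $\Z(G,\alpha)$ is a fusion category with $\dim(\Z(G,\alpha)) = |G|^2$, and its global dimension equals the sum of the squares of the categorical dimensions of its simple objects. Since an invertible object has dimension $1$, this gives $|\Inv(\Z(G,\alpha))| \le \dim(\Z(G,\alpha)) = |G|^2$, with equality precisely when every simple object is invertible, i.e. precisely when $\Z(G,\alpha)$ is pointed. I would therefore first record this equivalence, which turns the corollary into the purely arithmetic assertion $|\Inv(\Z(G,\alpha))| = |G|^2$.

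By Proposition \ref{Picard} we may replace $\Inv(\Z(G,\alpha))$ by $A(G,\alpha)$, whose order I would read off from the short exact sequence $1 \to \widehat{G} \to A(G,\alpha) \to Z_\alpha(G) \to 1$. This yields $|A(G,\alpha)| = |\widehat{G}|\cdot|Z_\alpha(G)|$. The two factors are each bounded by $|G|$: since $\widehat{G} = \Hom(G,k^*)$ factors through the abelianisation, $|\widehat{G}| = |G/[G,G]| \le |G|$ with equality if and only if $G$ is abelian; and $Z_\alpha(G)$ is by definition a subgroup of the centre $Z(G)$, so $|Z_\alpha(G)| \le |G|$ with equality if and only if $Z_\alpha(G) = G$.

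Consequently $|A(G,\alpha)| = |G|^2$ holds if and only if both factors attain their maximum, i.e. if and only if $G$ is abelian \emph{and} $Z_\alpha(G) = G$. I would finish by unpacking the second condition in the abelian case: when $G$ is abelian one has $Z(G) = G$, so $Z_\alpha(G) = G$ says exactly that for every $g \in G$ there is a $c \in C^1(G,k^*)$ with $c(x)c(y) = \alpha(x,y|g)c(xy)$, which is the definition of softness. Combining the two directions gives the corollary.

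I expect the only delicate point to be the opening step: the equivalence between pointedness and the dimension equality depends on the fact that the global dimension of a fusion category equals the sum of squares of the dimensions of its simples and that invertibles have dimension $1$, which over our algebraically closed field of characteristic zero is standard but worth stating explicitly. Verifying that the kernel of $A(G,\alpha) \to Z_\alpha(G)$ really is $\widehat{G}$ (so that normalisation forces $\alpha(x,y|e) = 1$ and the cochain $c$ becomes a genuine character) is routine, and everything after the dimension step is bookkeeping with orders of finite groups.
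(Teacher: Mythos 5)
Your proof is correct and takes essentially the same approach as the paper's: reduce pointedness to the numerical condition $|\Inv(\Z(G,\alpha))|=|G|^2$, then use Proposition \ref{Picard} together with the exact sequence $1\to\widehat{G}\to A(G,\alpha)\to Z_\alpha(G)\to 1$ to see that this forces $|\widehat{G}|=|G|=|Z_\alpha(G)|$, i.e.\ abelianness of $G$ and softness of $\alpha$. Your write-up merely makes explicit the dimension count and the bounds on the two factors that the paper leaves implicit.
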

\begin{proof}
The category $\Z(G,\alpha)$ is pointed iff its group of invertible objects is of order $|G|^2$.  
From proposition \ref{Picard}, $|\widehat{G}|=|G| = |Z_\alpha(G)|$. Both equalities imply that $G$ is abelian; the second also implies that $\alpha$ is soft.
\end{proof}

Now assume that $G=B$ is abelian.  Denote by $Z^3_{soft}(B,k^*)$ the group of soft 3-cocycles. It is straightforward that 3-coboundaries are soft.  Write $H^3_{soft}(B,k^*)=Z^3_{soft}(B,M)/B^3(B,k^*)$.
\begin{lemma}
Let $B$ be an abelian group.   Then the third soft cohomology group $H^3_{soft}(B,k^*)$ is the kernel of the alternation map ${Alt}_3:H^3(B,k^*)\to\Hom{\left(\Lambda^3B,k^*\right)}$.
\end{lemma}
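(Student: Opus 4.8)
The plan is to fix the last argument $z\in B$ and study the $2$-cochain $f_z\in C^2(B,k^*)$ given by $f_z(x,y)=\alpha(x,y|z)$, since by definition softness of $\alpha$ at $z$ means precisely that $f_z$ is a $2$-coboundary. First I would verify that $f_z$ is already a $2$-cocycle. Specialising the first identity of \eqref{pca} to the abelian case, where $zwz^{-1}=w$, gives
\[
\alpha(x,yz|w)\,\alpha(y,z|w)=\alpha(xy,z|w)\,\alpha(x,y|w),
\]
which is exactly the condition $\partial f_w=1$ for the cochain $f_w(x,y)=\alpha(x,y|w)$. Hence each $f_z$ determines a class $[f_z]\in H^2(B,k^*)$, and $\alpha$ is soft if and only if $[f_z]=0$ for every $z$.

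Next I would invoke the isomorphism $H^2(B,k^*)\simeq\Hom(\Lambda^2B,k^*)$ recorded in the introduction, which is realised by the antisymmetrisation map ${Alt}_2$ sending a class $[f]$ to the alternating form $(x,y)\mapsto f(x,y)/f(y,x)$. Because this map is injective, the vanishing $[f_z]=0$ is equivalent to the vanishing of the alternating form $(x,y)\mapsto f_z(x,y)/f_z(y,x)$. The heart of the argument is then the computation of this form. Feeding in the abelian specialisation
\[
\alpha(x,y|z)=\frac{\alpha(x,z,y)}{\alpha(x,y,z)\,\alpha(z,x,y)}
\]
of the given formula for $\alpha(x,y|z)$ and cancelling the six resulting factors, I expect to obtain
\[
\frac{f_z(x,y)}{f_z(y,x)}=\frac{\alpha(x,z,y)\,\alpha(y,x,z)\,\alpha(z,y,x)}{\alpha(x,y,z)\,\alpha(y,z,x)\,\alpha(z,x,y)}={Alt}_3(\alpha)(x,y,z)^{-1}.
\]

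Finally I would reassemble these per-$z$ conditions. As $z$ varies over $B$, the forms $f_z(x,y)/f_z(y,x)$ are exactly the inverse of the full antisymmetrisation ${Alt}_3(\alpha)$ read as a function of all three arguments, so $\alpha$ is soft if and only if ${Alt}_3(\alpha)\equiv 1$, i.e. $\alpha\in\ker{Alt}_3$. Since ${Alt}_3$ is defined on $H^3(B,k^*)$ (it factors through coboundaries) and every $3$-coboundary is soft, the set $Z^3_{soft}(B,k^*)$ is exactly the preimage of $\ker{Alt}_3$ under the projection $Z^3(B,k^*)\to H^3(B,k^*)$; passing to the quotient by $B^3(B,k^*)$ then identifies $H^3_{soft}(B,k^*)=Z^3_{soft}(B,k^*)/B^3(B,k^*)$ with $\ker({Alt}_3:H^3(B,k^*)\to\Hom(\Lambda^3B,k^*))$. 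The main obstacle is the antisymmetrisation step: one must check that the algebraic cancellation of the six cocycle factors lands on ${Alt}_3$ exactly, and it is the injectivity of ${Alt}_2$ — not merely a cohomology-class identity — that licenses the passage from ``$[f_z]$ trivial'' to ``the associated bilinear form trivial'', ensuring that no coboundary ambiguity is lost when the per-$z$ conditions are recombined into the single trilinear condition.
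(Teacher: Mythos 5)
Your proof is correct and follows essentially the same route as the paper: the paper packages your per-$z$ cochains $f_z=\alpha(-,-|z)$ into a single class in $H^2(B,\Map(B,k^*))$, identifies $H^3_{soft}$ as the kernel of $\alpha\mapsto\alpha(-,-|-)$, and invokes the same divisibility/universal-coefficient isomorphism ${Alt}_2$ together with a commutative diagram whose commutativity is exactly your explicit cancellation $f_z(x,y)/f_z(y,x)={Alt}_3(\alpha)(x,y,z)^{-1}$. The only difference is presentational: you verify by hand (the 2-cocycle property of $f_z$ and the six-factor computation) what the paper encodes diagrammatically with $\Map(B,k^*)$-valued coefficients.
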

\begin{proof}
The assignment $\alpha\mapsto \alpha(-,-|-)$ gives a homomorphism
\[
H^3(B,k^*)\to H^2(B,\Map(B,k^*))
\]
into the second cohomology of $B$ with coefficients in the (trivial) $B$-module of set-theoretic maps $\Map(B,k^*)$. 
The soft cohomology $H^3_{soft}(B,k^*)$ is the kernel of this homomorphism.
Since $k^*$ (and $\Map(B,k^*)$) is divisible, the universal coefficient formula implies that the alternation map ${Alt}_2$ gives an isomorphism
\[
H^2(B,\Map(B,k^*))\simeq \Hom{\left(\Lambda^2B,\Map(B,k^*)\right)}\ .
\]
The commutative diagram
\[
\xymatrix{
0\ar[r] & H^{3}_{soft}(B,k^*)\ar[r]&H^{3}(B,k^*)\ar[r]\ar[d]_{{Alt}_3}&H^2{\left(B,\Map(B,k^*)\right)}\ar[d]^{{Alt}_2}\\
& \null&\Hom(\Lambda^3B,k^*)\ \ar@{>->}[r]&\Hom{\left(\Lambda^2B,\Map(B,k^*)\right)}  
}
\]
gives the result.
\end{proof}

For an abelian group $B$, define $\Mex_{{pt}}(\Rep(B))$ to be the full categorical subgroup of $\Mex(\Rep(B))$ consisting of those modular extensions $\D$ of $\Rep(B)$ that are pointed categories.  The computations above give the following:

\begin{theorem}\lb{fmt}
The group of connected components $\pi_0(\Mex_{pt}(B)) = {Mex}_{pt}(\Rep(B))$ coincides with $H^3_{soft}(B,k^*)$ and fits into the short exact sequence
\[
0\ \longrightarrow\ {Mex}_{pt}(\Rep(B))\ \longrightarrow H^3(B,k^*)\ \longrightarrow\ \Hom{\left(\Lambda^3B,k^*\right)}\ \longrightarrow\ 0
\]
\end{theorem}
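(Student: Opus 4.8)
This statement assembles the three preceding results, so the plan is to read the two assertions off them and then supply the one missing ingredient. By Proposition~\ref{p0m} the map $\alpha\mapsto\Z(B,\alpha)$ is an isomorphism $H^3(B,k^*)\to {Mex}(\Rep(B))$, and $\Mex_{pt}(\Rep(B))$ is by definition the full subgroupoid on the pointed extensions. Since $B$ is abelian, Corollary~\ref{zga} says that $\Z(B,\alpha)$ is pointed exactly when $\alpha$ is soft, and softness descends to cohomology classes (coboundaries being soft); hence the objects of $\Mex_{pt}(\Rep(B))$ correspond precisely to the classes in $H^3_{soft}(B,k^*)$. This subgroupoid is closed under the group law: under $\Z(B,\alpha)\odot_{\Rep(B)}\Z(B,\beta)\simeq\Z(B,\alpha\beta)$ a product of pointed extensions is again pointed, because for abelian $B$ the assignment $\alpha\mapsto\alpha(-,-|-)$ is multiplicative, so the product of a $1$-cochain softening $\alpha$ with one softening $\beta$ softens $\alpha\beta$; the same remark applied to the quasi-inverse $\overline{\Z(B,\alpha)}=\Z(B,\alpha^{-1})$ handles inverses. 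Thus $\pi_0(\Mex_{pt}(\Rep(B)))=H^3_{soft}(B,k^*)$ inside $H^3(B,k^*)$.

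For the short exact sequence, the lemma immediately above already identifies $H^3_{soft}(B,k^*)$ with $\ker(Alt_3)$, giving injectivity of the first arrow and exactness in the middle. All that remains is surjectivity of
\[
Alt_3:H^3(B,k^*)\longrightarrow\Hom(\Lambda^3B,k^*),
\]
and this is the step I expect to be the main obstacle. The naive attempt to realize an alternating form by antisymmetrizing it fails, since antisymmetrizing an already-alternating trilinear cochain multiplies it by $3!$, and $\Hom(\Lambda^3B,k^*)$ need not be divisible.

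I would instead establish surjectivity by an explicit construction. Writing $B=\bigoplus_i\bZ/n_i\bZ$, a form $g\in\Hom(\Lambda^3B,k^*)$ is determined by its values $g(e_i\wedge e_j\wedge e_k)$ for $i<j<k$, each of which lies in the $\gcd(n_i,n_j,n_k)$-torsion of $k^*$. For each such triple I would take the directional cochain $(\vec a,\vec b,\vec c)\mapsto\zeta^{\,a_ib_jc_k}$ for a suitable root of unity $\zeta$; one checks it is a well-defined normalized $3$-cocycle whose alternation is trivial on every basis triple except $e_i\wedge e_j\wedge e_k$, where it equals $\zeta$. Letting $\zeta$ range over the $\gcd(n_i,n_j,n_k)$-th roots of unity realizes every admissible value, and the product of these cocycles over all triples maps to $g$ under $Alt_3$. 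Equivalently, since $k^*$ is injective as an abelian group, the universal coefficient theorem identifies $Alt_3$ with $\Hom(\psi,k^*)$ for the natural antisymmetrization $\psi:\Lambda^3B\to H_3(B,\bZ)$, and the construction amounts to the classical injectivity of $\psi$ (cf.~\cite{bre}). The only genuine computation is verifying that these explicit cochains are cocycles and alternate as claimed.
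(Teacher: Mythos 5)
Your proposal is correct, and for the parts the paper actually argues it follows the same route: the paper's ``proof'' of Theorem \ref{fmt} is exactly the assembly you describe, namely Proposition \ref{p0m} transporting the group structure, Corollary \ref{zga} (together with softness of coboundaries) identifying the pointed classes with $H^3_{soft}(B,k^*)$, and the lemma immediately preceding the theorem identifying $H^3_{soft}(B,k^*)$ with $\ker({Alt}_3)$ via the homomorphism $\alpha\mapsto\alpha(-,-|-)$ and the isomorphism $H^2(B,\Map(B,k^*))\simeq\Hom(\Lambda^2B,\Map(B,k^*))$. Where you genuinely add something is exactness on the right: the paper never proves surjectivity of ${Alt}_3$ (it is treated as known, implicitly resting on the functorial decomposition of $H_3(B)$ cited from \cite{bre}), whereas you correctly isolate it as the remaining nontrivial step and give a self-contained argument. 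Your construction does work: for $B=\bigoplus_i\bZ/n_i\bZ$ the exponent $(\vec a,\vec b,\vec c)\mapsto a_ib_jc_k$ is trilinear, hence a normalized $3$-cocycle; it is well defined precisely because $\zeta$ has order dividing $\gcd(n_i,n_j,n_k)$; and its alternation is trivial on every basis triple except $e_i\wedge e_j\wedge e_k$, where it equals $\zeta^{\pm1}$. Since $\Lambda^3B=\bigoplus_{i<j<k}\bZ/\gcd(n_i,n_j,n_k)\bZ$ and $k^*$ contains all roots of unity, products of such cocycles realize every alternating form. (Your closure-under-$\odot_{\Rep(B)}$ argument via multiplicativity of $\alpha\mapsto\alpha(-,-|-)$ is also fine, though closure also follows for free once $H^3_{soft}(B,k^*)$ is identified with the kernel of a homomorphism.) In terms of what each approach buys: the paper's implicit appeal to functorial homology keeps the exposition short but leaves a genuine gap for the reader at the right-hand end of the sequence, while your explicit cocycles make the theorem self-contained and, as a bonus, exhibit concrete representatives of the classes that are not soft.
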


%%%%%%%%%%%

\section{Lagrangian extensions of abelian groups}

\subsection{Pointed braided fusion categories}\lb{pbfc}

Here we quickly recall a description of pointed braided fusion categories, i.e., braided fusion categories with invertible simple objects.

The isomorphism classes of simple objects of a pointed braided fusion category $\C$ form an abelian group $A = \Inv(\C)$. The category $\C$ is equivalent to the category of $A$-graded vector spaces with associativity constraint and braiding given by 
\[
\alpha_{V,U,W}(v\otimes(u\otimes w))=\alpha(x,y,z)((v\otimes u)\otimes w),\qquad v\in V_x,\ u\in U_y,\ w\in W_z,\ x,y,z\in A,
\]
\begin{equation}\label{braid}
c_{V,W}(v\otimes w)=c(x,y)(w\otimes v),\qquad v\in V_x,\ w\in W_y\ .
\end{equation}
correspondingly.  The unit object conditions mean that $\alpha\in C^3(A,k^*)$ and $c\in C^2(A,k^*)$ are normalised cochains.  The pentagon axiom is equivalent to the $3$-cocycle condition for $\alpha$.  The hexagon axioms amount to 
\begin{align*}c(x,z)c(y,z)\alpha(x,z,y) & = c(x+y,z)\alpha(x,y,z)\alpha(z,x,y)\\ 
c(x,z)c(x,y)\alpha(x,y,z) & = c(x,y+z)\alpha(y,x,z)\alpha(y,z,x)
\end{align*} 
for all $a,b,c\in A$.  
A pair $(\alpha,c)\in Z^3(A,k^*)\times C^2(A,k^*)$ satisfying to the above conditions is called an {\em abelian 3-cocycle}.  We denote the corresponding pointed braided fusion category by $\C(A,\alpha,c)$. 

Structures of braided equivalences on the identity functor correspond to the following coboundary relation on abelian 3-cocycles.  An abelian $3$-cocycle $(\alpha,c)$ is an abelian {\em $3$-coboundary} if $(\alpha,c)=(\d u(x,y,z),u(z,y)u(y,z)^{-1})$ for some $u\in C^2(A,k^*)$.  The {\em third abelian cohomology} is the quotient $H^3_{ab}(A,k^*)=Z^{3}_{ab}(A,k^*)/B^{3}_{ab}(A,k^*)$.

Eilenberg and Mac Lane gave a convenient description of $H^3_{ab}(A,k^*)$.
A function $q:A\rightarrow k^*$ is {\em quadratic} if $q(nx)=q(x)^{n^{2}}$ for all $n\in\bZ$ and $x\in A$, and the function $\sigma:A\times A\to k^*$ defined by $\sigma(x,y)=q(x+y)^{-1}q(x)q(y)$ (the {\em polarisation} of $q$) is bi-multiplicative. Denote by $Q(A,k^*)$ the group of quadratic functions.  
It was shown in \cite{em} (see also \cite{js}) that the assignment $(\alpha,c)\mapsto q$, where $q:A\to k^*$ is defined by $q(x)=c(x,x)$, gives an isomorphism
\[
H^{3}_{ab}(A,k^*)\overset{\sim}{\longrightarrow} Q(A,k^*)\ .
\]
Slightly abusing notation, we will write $\C(A,q)$ instead of $\C(A,\alpha,c)$.  Where the quadratic function $q$ is trivial, we write $\C(A,q)=\C(A)$. The category $\C(A)$ is symmetric and coincides with $\Rep(\widehat A)$.

A quadratic function $q:A\to k^*$ is {\em non-degenerate} if its polarisation $\sigma$ is non-degenerate, i.e.,
\[
\ker(\sigma) = \{x\in A|\ q(x+y) = q(x)q(y)\quad \forall y\in A\} = 0\ .
\]
The pointed category $\C(A,q)$ is non-degenerate as a braided fusion category if and only if $q$ is non-degenerate.

The {\em orthogonal complement} of a subgroup $B$ in a quadratic abelian group $(A,q)$ is  
\[
B^\perp=\{x\in A\ |\ q(x+y)=q(x)q(y)\ \forall y\in B\}\ .
\]  
Note that the symmetric centraliser \eqref{syce} of $\C(B,q|_B)$ in $\C(A,q)$ is $\C(B^\perp,q|_{B^\perp})$. 

A subgroup $B\subset A$ is {\em isotropic} if $q|_B = 0$. If $B$ is an isotropic subgroup of $A$, then $B\subset B^\perp$.  In this case, we call the quotient group $B^\perp/B$ the {\em isotropic contraction} along $B$.  The quadratic function $q$ descends to the isotropic contraction $B^\perp/B$.
An isotropic subgroup $L$ of a non-degenerate quadratic group $(A,q)$ is {\em Lagrangian} if $L=L^\perp$. Note that  $L$ is Lagrangian if and only if $|L|^2=|A|$.
In other words, the pointed category $\C(A,q)$ is a modular extension of the symmetric category $\C(L)$ if and only if $L$ is a Lagrangian subgroup of a non-degenerate quadratic group $(A,q)$.

\begin{remark}\lb{pzg}
The braiding \eqref{br} on $\Z(G,\alpha)$ induces a quadratic function on the group of invertible objects $q:A(G,\alpha)\to k^*$ given by $q(z,c) = c(z)$.  When $B$ is abelian and $\alpha\in Z^3(B,k^*)$ is soft, the group $A(B,\alpha)$ is a Lagrangian extension of $\widehat{B}$ with respect to the quadratic function $q$.
\end{remark}

We finish this preliminary section by recalling basic facts about etale algebras in pointed braided fusion categories (see, e.g., \cite{dmno} for details).

The graded support of an indecomposable separable algebra in a pointed fusion category is a subgroup of the grading group.  Etale algebras in the pointed category $\C(A,q)$ correspond to isotropic subgroups of $A$. The category of modules $\C(A,q)_R$ over the etale algebra $R=R(B)$ corresponding to an isotropic $B\subset A$ is pointed with the group of invertible objects $A/B$. The category of local modules $\C(A,q)_{R(B)}^{loc}$ is braided equivalent to $\C(B^\perp/B,\overline q)$. In particular, Lagrangian algebras in $\C(A,q)$ correspond to Lagrangian subgroups of $A$.

\subsection{Lagrangian extensions}\lb{lagext}

Let $B$ be an abelian group.  A {\em Lagrangian extension of $B$} is a triple $(A,q,\iota)$, where $A$ is an abelian group, $\iota:B\hookrightarrow A$ is an embedding of groups, and $q:A\to k^*$ is a non-degenerate quadratic function such that $B\subset A$ is a Lagrangian subgroup with respect to $q$. Lagrangian extensions of $B$ form a category $\Lex(B)$.  
A morphism $(A,q,\iota)\to(A',q',\iota')$ in $\Lex(B)$ is a homomorphism $\omega:A\to A'$ such that $q'\circ\omega=q$ and $\omega\circ\iota=\iota'$.
Note that all morphisms in $\Lex(B)$ are invertible, i.e. $\Lex(B)$ is a groupoid.

The results of the previous section imply the following.
\begin{proposition}
The functor 
\begin{equation}\lb{flm}
\Lex(B)\ \to\ \Mex_{pt}(\C(B)),\qquad (A,q,\iota)\mapsto\C(A,q)\ .
\end{equation} 
is an equivalence of groupoids.
\end{proposition}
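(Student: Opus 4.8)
The plan is to show that the functor \eqref{flm} is an equivalence of groupoids by verifying that it is essentially surjective and fully faithful; both reduce to facts already recorded in Subsection \ref{pbfc}, together with the divisibility of $k^*$. Since $\Lex(B)$ and $\Mex_{pt}(\C(B))$ are groupoids, for full faithfulness it will be enough to analyse the induced maps on each $\Hom$-set, which are either empty or torsors over the relevant automorphism groups.

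First I would treat essential surjectivity. Let $\D$ be a pointed modular extension of $\C(B)$, with given embedding $j\colon\C(B)\hookrightarrow\D$. Since $\D$ is pointed and braided, the Eilenberg--Mac Lane classification recalled in Subsection \ref{pbfc} gives a braided equivalence $\D\simeq\C(A,q)$, where $A=\Inv(\D)$ and $q$ is the self-braiding; non-degeneracy of $\D$ forces $q$ to be non-degenerate. The embedding $j$ induces an injection $\iota\colon B\to A$ on invertibles, and since $\C(B)$ is symmetric we have $q|_{\iota(B)}=0$, so $\iota(B)$ is isotropic. The modular-extension condition $\C_\D(\C(B))=\C(B)$ translates, via the identification of the symmetric centraliser \eqref{syce} with the orthogonal complement, into $\iota(B)^\perp=\iota(B)$; hence $\iota(B)$ is Lagrangian and $(A,q,\iota)\in\Lex(B)$ with image $\C(A,q)\simeq\D$. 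The one point needing care is that the equivalence can be chosen to intertwine the standard embedding $\C(B)=\C(\iota(B))\hookrightarrow\C(A,q)$ with $j$ strictly (``on the nose''), as required for a morphism of $\Mex_{pt}(\C(B))$; this is a routine rigidification of the identifying equivalence on the full subcategory $j(\C(B))$.

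For full faithfulness, fix $(A,q,\iota),(A',q',\iota')\in\Lex(B)$. A braided equivalence $F\colon\C(A,q)\to\C(A',q')$ fixing $\C(B)$ on the nose induces a homomorphism $\omega=\Inv(F)\in\Hom(A,A')$; being braided it satisfies $q'\circ\omega=q$, and fixing $\C(B)$ on the nose gives $\omega\circ\iota=\iota'$, so $\omega$ is a morphism in $\Lex(B)$. Conversely any such $\omega$ is realised by a braided equivalence $F_\omega$: because the Eilenberg--Mac Lane isomorphism $H^3_{ab}(A,k^*)\cong Q(A,k^*)$ is natural and $q'\circ\omega=q$, the abelian $3$-cocycle of $\C(A',q')$ pulls back along $\omega$ to one cohomologous to that of $\C(A,q)$, which is exactly a braided tensor structure on the grading-relabelling functor; and $\omega\circ\iota=\iota'$ makes $F_\omega$ fix $\C(B)$ on the nose. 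Injectivity of $\omega\mapsto[F_\omega]$ is immediate, since isomorphic functors induce the same map on $\pi_0=\Inv$.

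The heart of the argument, and the step I expect to be the main obstacle, is the surjectivity of $\omega\mapsto[F_\omega]$: one must show that the soft braided tensor autoequivalence $F_\omega^{-1}\circ F$ of $\C(A,q)$, which induces the identity on $\Inv$ and fixes $\C(B)$, is isomorphic to the identity functor. Such an autoequivalence is a tensor structure on the identity functor, i.e.\ a $u\in C^2(A,k^*)$ with $\d u=1$; compatibility with the braiding \eqref{braid} forces $u$ to be symmetric, and two such differ by an isomorphism exactly when they differ by a coboundary. Hence these autoequivalences are classified by $Z^2_{sym}(A,k^*)/B^2(A,k^*)\cong\Ext(A,k^*)$, which vanishes because $k^*$ is divisible, hence an injective $\bZ$-module. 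Therefore $F\cong F_\omega$, the map on each nonempty $\Hom$-set is a bijection, and $\pi_0$-injectivity (any braided equivalence fixing $\C(B)$ yields a morphism of Lagrangian extensions) disposes of the empty case. Together with essential surjectivity this shows \eqref{flm} is an equivalence of groupoids. Note that the divisibility of $k^*$ enters only in this final vanishing, which is the single arithmetic input on which the whole equivalence rests.
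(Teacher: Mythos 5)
Your proof is correct and takes essentially the same route as the paper's (which is only a two-sentence sketch): essential surjectivity via the Eilenberg--Mac Lane classification of pointed braided categories together with the dictionary of Section \ref{pbfc} identifying the symmetric centraliser \eqref{syce} with the orthogonal complement, and full faithfulness via the correspondence between braided equivalences fixing $\C(B)$ and isomorphisms of Lagrangian extensions. Your explicit argument that $\omega\mapsto[F_\omega]$ is surjective on $\Hom$-sets --- reducing soft braided autoequivalences to symmetric $2$-cocycles and invoking $Z^2_{sym}(A,k^*)/B^2(A,k^*)\cong\Ext(A,k^*)=0$ by divisibility of $k^*$ --- supplies precisely the detail the paper leaves implicit in its assertion that every equivalence of extensions ``corresponds to'' an isomorphism of Lagrangian extensions.
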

\begin{proof}
Indeed, any pointed modular extension of $\C(B)$ has a form $\C(A,q)$ for some Lagrangian extension $\C(A,q)$ of $B$.
Any equivalence of extensions $\C(A,q)\to \C(A',q')$ corresponds to an isomorphism of Lagrangian extensions $(A,q)\to (A',q')$.
\end{proof}

The equivalences \eqref{flm} allows us to transfer the monoidal structure $\odot_{\C({B})}$ from $\Mex_{pt}(\C(B))$ to $\Lex(B)$.
Note first that the etale algebra $R=F(I)\in \C(B)\boxtimes\C(B)$ corresponds under identification $\C(B)\boxtimes\C(B)\simeq \C(B\op B)$ to the anti-diagonal subgroup $\bar{\delta}(B)=\{(b,-b)|\ b\in B\}\subset B\op B$.
Indeed,the tensor product functor $C(B)\boxtimes\C(B)\to\C(B)$ corresponds to the direct image of the addition homomorphism $a:B\times B\to B,\ (x,y)\mapsto x+y$. Thus $R$ is the inverse image $a^*(I) = R(\bar{\delta}(B))$.  

Now the chain of braided equivalences
\[
\C(A,q)\odot_{\C({B})}\C(A',q') = (\C(A,q)\boxtimes\C(A',q'))^{loc}_R\simeq \C(A\times A',q\times q')^{{loc}}_{R(\overline{\delta}(B))}\simeq
\]
\[
\simeq\C{\left(\overline{\delta}(B)^\perp/\overline{\delta}(B),\overline{q\times q'}\right)}
\]
identifies  $\C(A,q)\odot_{\C({B})}\C(A',q')$ with the pointed category corresponding to the isotropic contraction of $\bar{\delta}(B)$ in the quadratic group $(A\times A',q\times q')$. We denote this isotropic contraction by $(A,q,\iota)\boxplus(A',q',\iota')$ and call it the {\em sum} of Lagrangian extensions $(A,q,\iota)$ and $(A',q',\iota')$. Note also that the embedding $B\subset \overline{\delta}(B)^\perp/\overline{\delta}(B)$ given by $b\mapsto (b,0)$ makes the sum a Lagrangian extension of $B$. 
\begin{remark}\lb{orco}
Here is a more explicit description of the orthogonal complement:
\[
\overline{\delta}(B)^\perp = \{(a,a')\in A\times A'|\ q(a+\iota(x))q'(a'-\iota'(x))=q(a)q'(a')\quad\forall x\in B\}\ .
\]
Note that it also coincides with the fibered product $A\times_{\widehat{B}}A'$ with respect to the natural surjections $A\to \widehat{B} \leftarrow A'$.
\end{remark}

By the {\em trivial} Lagrangian extension, we mean $(B\times\widehat{B}, q_\mathrm{std}, \iota)$, with $q_\mathrm{std}(b,\chi) = \chi(b)$ and $\iota(b) = (b,0)$. The trivial Lagrangian extension is the quadratic group corresponding to the monoidal centre $\Z(\C(B))$ and is the monoidal unit object in $\Lex(B)$. 
Define the {\em conjugate} of the Lagrangian extension $(A,q,\iota)$ by $\overline{(A,q,\iota)} = (A,-q,\iota)$.
The conjugate is the dual object in $\Lex(B)$.

We denote by $Lex(B) = \pi_0(\Lex(B))$ the {\em group of Lagrangian extensions}.
The equivalence \eqref{flm} implies that $Lex(B)$ is isomorphic to $Mex_{pt}(\C(B))$.
Using that $\C(B)\simeq \Rep(\widehat B)$ as symmetric fusion categories, we have $Lex(B) = Mex_{pt}(\Rep(\widehat B))$.
Theorem \ref{fmt} now gives the following.
\begin{corollary}\lb{lexzn}
The group of Lagrangian extensions fits into the short exact sequence
\[
0\ \longrightarrow\ Lex(\widehat B)\ \longrightarrow H^3(B,k^*)\ \longrightarrow\ \Hom(\Lambda^3B,k^*)\ \longrightarrow\ 0
\]
\end{corollary}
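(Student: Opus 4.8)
The plan is to deduce the sequence directly from Theorem \ref{fmt} by transporting it through the equivalence \eqref{flm} together with Pontryagin duality, so that the only real content is already supplied by Theorem \ref{fmt}. First I would apply the equivalence of groupoids \eqref{flm} with the group $\widehat{B}$ in place of $B$. Passing to $\pi_0$ this identifies the group of Lagrangian extensions $Lex(\widehat{B}) = \pi_0(\Lex(\widehat{B}))$ with $\pi_0(\Mex_{pt}(\C(\widehat{B}))) = Mex_{pt}(\C(\widehat{B}))$. Since the monoidal structure $\boxplus$ on $\Lex(\widehat{B})$ was defined precisely by transporting $\odot_{\C(\widehat{B})}$ along \eqref{flm}, this identification is an isomorphism of groups and not merely a bijection of underlying sets.

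Next I would invoke the identification $\C(A) = \Rep(\widehat{A})$ of symmetric fusion categories recorded in Section \ref{pbfc}. Taking $A = \widehat{B}$ and using Pontryagin duality $\widehat{\widehat{B}} \cong B$ for the finite abelian group $B$, this gives $\C(\widehat{B}) = \Rep(\widehat{\widehat{B}}) = \Rep(B)$ as symmetric fusion categories, hence $Mex_{pt}(\C(\widehat{B})) = Mex_{pt}(\Rep(B))$. Chaining the two identifications yields an isomorphism $Lex(\widehat{B}) \cong Mex_{pt}(\Rep(B))$. Now Theorem \ref{fmt}, applied with the group $B$ itself, furnishes the short exact sequence
\[
0 \longrightarrow Mex_{pt}(\Rep(B)) \longrightarrow H^3(B,k^*) \longrightarrow \Hom\left(\Lambda^3 B, k^*\right) \longrightarrow 0,
\]
and substituting the isomorphism $Lex(\widehat{B}) \cong Mex_{pt}(\Rep(B))$ into its leftmost term produces exactly the asserted sequence. (Equivalently one may derive the sequence for $Lex(B)$ in terms of $H^3(\widehat{B},k^*)$ and then relabel $B \leftrightarrow \widehat{B}$; the present route avoids the double dual.)

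I do not expect a serious obstacle here, since the hard work is carried entirely by Theorem \ref{fmt}; the argument is a matter of matching identifications. The one point that deserves care is the verification that \eqref{flm} is genuinely an equivalence of categorical groups, that is, that the comparison functor is monoidal for $\boxplus$ and $\odot_{\C(\widehat{B})}$ and therefore induces a group isomorphism on $\pi_0$. This is guaranteed by construction, as $\boxplus$ was defined by transporting the monoidal structure along \eqref{flm}, so the isomorphism at the level of groups is automatic and the exactness of the sequence transfers verbatim.
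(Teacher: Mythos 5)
Your proposal is correct and takes essentially the same route as the paper: identify $Lex(\widehat B)$ with $Mex_{pt}(\Rep(B))$ using the equivalence \eqref{flm} (which is monoidal by construction of $\boxplus$) together with the identification $\C(\widehat B)\simeq\Rep(B)$ from Pontryagin duality, and then quote Theorem \ref{fmt}. The paper phrases this as $Lex(B)=Mex_{pt}(\Rep(\widehat B))$ and performs the relabeling $B\leftrightarrow\widehat B$ implicitly, but the content is identical to yours.
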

Corollary \ref{lexzn} can be used to describe the third cohomology $H^3(B,k^*)$. 
But here we are going to use it in the opposite way to get some basic information about groups of Lagrangian extensions.

\begin{remark}
The third cohomology group $H^3(\bZ/n\bZ,k^*)$ is cyclic of order $n$.  Since $\Lambda^3(\bZ/n\bZ)=0$, corollary \ref{lexzn}
gives $Lex(\bZ/n\bZ)\simeq\bZ/n\bZ$.
\end{remark}

In what follows we replace $k^*$ by $\bQ/\bZ$ to simplify the exposition.

\begin{example}\lb{lexz2}
Here, we describe explicitly Lagrangian extensions from $Lex(\bZ/2\bZ)$.  
Since a Lagrangian extension $A\supset \bZ/2\bZ$ is an abelian group extension of $\bZ/2\bZ$ by $\bZ/2\bZ$, $A$ is of order 4, and is isomorphic to either $(\bZ/2\bZ)^2$ or $\bZ/4\bZ$.

Let $A=(\bZ/2\bZ)^2$.  The matrices of possible non-degenerate symmetric bilinear forms $\sigma:A\times A\to \bQ/\bZ$ such that the first summand $\bZ/2\bZ$ is Lagrangian are 
\[
\left(\begin{array}{cc} \ 0\ & \ \onehalf\  \\ [12pt] \frac{1}{2}&\frac{1}{2}\\\end{array}\right)\quad \text{and}\quad \left(\begin{array}{ccc}0&&\frac{1}{2}\\ [12pt] \frac{1}{2}&&0\\\end{array}\right)\ .
\]
Here entries are in $\bQ/\bZ$.  For the first one, a compatible quadratic function $q_\pm:A\to\bQ/\bZ$ has the form 
\[
q_\pm(e_1)=0,\ q_\pm(e_2)=\pm\frac{1}{4},\  q_\pm(e_1+e_2)=\mp\frac{1}{4}\ .
\]
The corresponding classes in $Lex(\bZ/2\bZ)$ are isomorphic and nontrivial.  The second bilinear form corresponds to the trivial Lagrangian extension.  Since all classes in $Lex(\bZ/2\bZ)$ are represented by Lagrangian extensions $A\supset \bZ/2\bZ$ with $A= (\bZ/2\bZ)^2$, the second possibility, $A=\bZ/4\bZ$, does not realize.
\end{example}

\begin{example}\lb{lexz}
More generally, let $B=\bZ/2^\ell\bZ$ for some positive integer $\ell$.  As in example \ref{lexz2} above, a Lagrangian extension $A\supset\bZ/2^\ell\bZ$ is an abelian group extension of $\bZ/2^\ell\bZ$ by $\bZ/2^\ell\bZ$, whence $|A|=2^{2\ell}$.  If $A=\bZ/2^{2\ell}\bZ$, then the only possible Lagrangian subgroup of $A$ is $\langle 2\rangle$.  But a straightforward computation shows that if $q:A\to\bQ/\bZ$ is a quadratic function with respect to which the subgroup $\langle 2\rangle$ is isotropic, then the associated bilinear form $\sigma$ is degenerate.  Thus $\bZ/2^{2\ell}\bZ$ is not realizable as a Lagrangian extension of $\bZ/2^\ell\bZ$.  In particular, this shows that $\bZ/4\bZ$ is not realizable as a Lagrangian extension of $\bZ/2\bZ$ (cf. example \ref{lexz2} above).
\end{example}

\begin{example}\lb{eleven}
Let now $A=\bZ/2^{2\ell-1}\bZ\times\bZ/2\bZ$. Consider a non-degenerate quadratic function $q:A\to\bQ/\bZ$  given by 
\[
q(a,b)=\frac{a^2}{2^{2\ell}}-\frac{b^2}{4}\ .
\]
The subgroup $L\subset A$ generated by the pair $\left(2^{\ell-1},1\right)$ is Lagrangian with respect to $q$ and is isomorphic to $\bZ/2^\ell\bZ$.
\end{example}

We finish this section with a quick discussion of the functoriality property of $\Lex$.  Namely, for a homomorphism of abelian groups $f:B\to D$, we will define a functor between symmetric categorical groups $\Lex(B)\to \Lex(D)$.

For a Lagrangian extension $(A,q,\iota)$ of $B$, set $A'  = \frac{(A\times_{\widehat{B}}\widehat{D})\times D}{\overline{\Delta}(B)}$, where $\overline{\Delta}(B)$ is the antidiagonal subgroup $\{(b,-b)|b\in B\}$.
Now define a Lagrangian extension of $D$
\begin{equation}\lb{lexf}
\Lex(f)(A,q,\iota)=\left(\frac{(A\times_{\widehat{B}}\widehat{D})\times D}{\overline{\Delta}(B)},q',\iota'\right)\ ,
\end{equation}
where $q':A'\to\bQ/\bZ$ is given by $q'(a,\chi,d)=q(a)+\chi(d)$. 

\begin{remark}\lb{fsh}
Suppose now that $f$ is surjective, and let $K=\ker(f)$. One can check that in this case $A\times_{\widehat{B}}\widehat{D}$ coincides with the orthogonal complement $K^\perp$ of $K$ in $A$. Moreover, the quotient $\frac{(A\times_{\widehat{B}}\widehat{D})\times D}{\overline{\Delta}(B)}$ coincides with the isotropic contraction $K^\perp/K$ along $K$.  This contains $B/K$, which is isomorphic to $D$. Thus \eqref{lexf} reduces to 
\[
\Lex(f)(A,q,\iota)=\left(K^\perp/K,q|_{K^\perp},\iota'\right)\ ,
\]
where $\iota':D\simeq B/K\to K^\perp/K$ is induced by $\iota:B\to K^\perp$. 
\end{remark}

%%%%%%%%%%%%%%%%%%%%%

\subsection{The categorical group $\mathcal{E}xt(B,D)$}\lb{cge}

Let $B$ and $D$ be abelian groups.  Denote by $\mathcal{E}xt(B,D)$ the category whose objects are abelian group extensions $D\overset{\iota}{\longrightarrow}A\overset{\pi}{\longrightarrow}B$ and whose morphisms are group homomorphisms $\xi:A\to A'$ making the diagram
\[
\xymatrix{
&&A\ar[dr]^{\pi}\ar[dd]^{\xi}&&\\
0\ar[r]&D\ar[ur]^{\iota}\ar[dr]_{\iota'}&&B\ar[r]&0\\
&&A'\ar[ur]_{\pi'}&&\\
}
\]
commute.  Note that such $\xi$ is necessarily an isomorphism, i.e., $\Ex(B,D)$ is a groupoid.

The category $\Ex(B,D)$ is monoidal with respect to the Baer sum.  Recall that the {\em Baer sum} of extensions is
\begin{equation}\lb{baer}
\left(D\overset{\iota}{\longrightarrow}A\overset{\pi}{\longrightarrow}B\right)\boxplus\left(D\overset{\iota'}{\longrightarrow}A'\overset{\pi'}{\longrightarrow}B\right)=\left(D\overset{i}{\longrightarrow}A\times_{B}A'/\bar{\delta}(D)\overset{p}{\longrightarrow}B\right)\ ,
\end{equation}
where $A\times_{B}A'$ is the fibered product, $\bar{\delta}(D)$ is the antidiagonal subgroup, $i=\iota=\iota'$ and $p=\pi=\pi'$.  

The unit object in $\Ex(B,D)$ is the trivial extension $D\to D\oplus B\to B$ with the canonical embedding and projection maps.  The {\em opposite extension} to $D\overset{\iota}{\longrightarrow}A\overset{\pi}{\longrightarrow}B$ is the extension $D\overset{\iota}{\longrightarrow}A\overset{-\pi}{\longrightarrow}B$.  Altogether, this makes the category $\mathcal{E}xt(B,D)$ into a categorical group with respect to $\boxplus$.  The standard invariants of the categorical group $\Ex(B,D)$ are $\pi_0(\Ex(B,D)) = \Ext(B,D)$ and $\pi_1(\Ex(B,D)) = \Hom(B,D)$.

The functor $\Ex$ is functorial in each of its arguments; it is contravariant in its first argument and covariant in its second argument.

In the special case when the arguments $B$ and $D$ are dual to each other, the categorical group $\mathcal{E}xt(B,D)$ has a natural symmetry.  Namely, define the contravariant functor $T:\Ex(\widehat{B},B)\to\Ex(\widehat{B},B)$ by 
\[
T{\left(\xymatrix{B\ar[r]^{\iota}&A\ar[r]^{\pi}&\widehat{B}\\}\right)}=\left(\xymatrix{B\ar[r]^{\widehat{\pi}e}&\widehat{A}\ar[r]^{\widehat{\iota}}&\widehat{B}}\right)\ ,
\]
where $e:B\to \widehat{\widehat{B}}$ is the canonical evaluation isomorphism.  The functor $T$ is an involutive autoequivalence. More precisely, we have a natural isomorphism $i:\Id\to T^2$ (given by $e:A\to {\widehat{\widehat{A}}}$) such that $i_{T(X)} = T(i_X)^{-1}$. 
Namely, let $X\in\Ex(\widehat{B},B)$ be the Lagrangian extension $\xymatrix{B\ar[r]^{\iota}&A\ar[r]^{\pi}&\widehat{B}\\}$, so that $T(X)$ is the extension
\[
\xymatrix{\widehat{\widehat{B}}\ar[r]^{\widehat{\pi}}&\widehat{A}\ar[r]^{\widehat{\iota}}&\widehat{B} }\ .
\]
Define a map $X\to T^2(X)$ by the following diagram:  
\[
\xymatrix{B\ar[r]^\iota\ar[d]_{e_L}&A\ar[r]^\pi\ar[d]^{e_A}&\widehat{B}\\
\widehat{\widehat{B}}\ar[r]_{\widehat{\widehat{\iota}}}&\widehat{\widehat{A}}\ar[r]_{\widehat{\widehat{\pi}}}&\widehat{\widehat{\widehat{B}}}\ar[u]_{\widehat{e_B}}\\}\ .
\]
The left-hand square of this diagram commutes due to naturality of the evaluation map $e:\Id\to\widehat{\widehat{(\ )}}$.  
Note that $e_{\widehat{B}}=\widehat{e_B}^{-1}$.  Indeed, $e_{\widehat{B}}:\chi\mapsto\left(e(a)\mapsto e(a)(\chi)=\chi(a)\right)$ for $a\in B$, whence $\widehat{e_B}\circ e_{\widehat{B}}:\chi\mapsto\left(a\mapsto\chi(a)\right)$ for $a\in B$.  This implies that the right-hand square of the diagram also commutes, and therefore the extensions $X$ and $T^2(X)$ are equivalent.

More generally, let $\G$ be a categorical group.  
Let $T:\G\to \G$ be a contravariant monoidal autoequivalence with a natural isomorphism $i:\Id\to T^2$ such that $i_{T(X)} = T(i_X)^{-1}$ for any $X\in\G$.  We say that an object $X\in\G$ is {\em $T$-equivariant} if there is an isomorphism $x:X\to T(X)$ making the following diagram commute:
\begin{equation}\lb{eco}
\xymatrix{
X\ar[dr]_{x}\ar[rr]^{i_X}&&T^2(X)\ar[dl]^{T(x)}\\
&T(X)\\
}
\end{equation}
Let $\tau$ be the effect of $T$ on $\pi_0(\G)$, and denote by $\pi_0(\G)_0^\tau$ the subgroup of classes of $T$-equivariant objects.

\begin{proposition}
The sequence 
\[
\xymatrix{0 \ar[r] & \pi_0(\G)_0^\tau\ar[r] & \pi_0(\G)^\tau \ar[r] & H^1(\bZ/2\bZ, \pi_1(\G))}
\]
is exact.
\end{proposition}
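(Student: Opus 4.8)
The plan is to construct the connecting homomorphism explicitly and to read off both exactness statements from the fact that the map $g_X:\Aut_\G(X)\to\Aut_\G(I)=\pi_1(\G)$ of Remark \ref{acg} is an isomorphism.

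First I would record the $\bZ/2\bZ$-module structure on $\pi_1(\G)$. Since $T$ is monoidal it sends the unit to an object isomorphic to $I$, so its effect on $\Aut_\G(I)$, transported along this isomorphism, is a group automorphism of $\pi_1(\G)$; the natural isomorphism $i:\Id\to T^2$ together with commutativity of $\pi_1(\G)$ forces its square to be the identity. I denote this involution again by $\tau$, so that $H^1(\bZ/2\bZ,\pi_1(\G))=\ker(1+\tau)/\mathrm{im}(1-\tau)$. The compatibility of $g$ with the unit constraints underlying Remark \ref{acg} gives the naturality relation $g_{T(X)}(T(u))=\tau\bigl(g_X(u)\bigr)$ for every $u\in\Aut_\G(X)$, which I would isolate first as the basic bookkeeping lemma.

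Next I would define the map $\pi_0(\G)^\tau\to H^1(\bZ/2\bZ,\pi_1(\G))$. Given a class with $[T(X)]=[X]$, choose an isomorphism $x:X\to T(X)$ and set
\[
\theta(x)=x^{-1}\circ T(x)\circ i_X\ \in\ \Aut_\G(X),
\]
the automorphism measuring the failure of the triangle \eqref{eco} to commute; by construction $\theta(x)=\mathrm{id}_X$ exactly when $x$ witnesses $T$-equivariance. The assignment $[X]\mapsto\bigl[g_X(\theta(x))\bigr]$ will be the connecting map, and it requires three verifications. For the cocycle condition I would compute $T(\theta(x))=x\circ\theta(x)^{-1}\circ x^{-1}$; after cancelling $T(x)^{-1}$ this reduces precisely to the identity $x=T(i_X)\circ T^2(x)\circ i_X$, which is exactly naturality of $i$ combined with the hypothesis $i_{T(X)}=T(i_X)^{-1}$. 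Applying the bookkeeping lemma then yields $\tau\bigl(g_X(\theta(x))\bigr)=-g_X(\theta(x))$, so $g_X(\theta(x))\in\ker(1+\tau)$. For independence of $x$, replacing $x$ by $x\circ u$ with $u\in\Aut_\G(X)$ and using contravariance of $T$ together with the bookkeeping lemma gives $g_X(\theta(x\circ u))=g_X(\theta(x))+(\tau-1)\bigl(g_X(u)\bigr)$, a change by an element of $\mathrm{im}(1-\tau)$. Independence of the representative of $[X]$ and the homomorphism property then follow from naturality of $i$ and $g$, from monoidality of $T$, and from additivity of $g$ over $\otimes$ (with commutativity of $\pi_0(\G)$).

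Finally I would deduce exactness. Injectivity of $\pi_0(\G)_0^\tau\hookrightarrow\pi_0(\G)^\tau$ is the inclusion of a subgroup. For exactness in the middle: if $[X]$ is $T$-equivariant I may choose $x$ with $\theta(x)=\mathrm{id}_X$, so the class is trivial; conversely, if $g_X(\theta(x))\in\mathrm{im}(1-\tau)$ then by the variation formula above I can choose $u$ with $g_X(\theta(x\circ u))=0$, and since $g_X$ is an isomorphism this forces $\theta(x\circ u)=\mathrm{id}_X$, so $x\circ u$ witnesses equivariance and $[X]\in\pi_0(\G)_0^\tau$. The main obstacle is the cocycle verification: one must keep scrupulous track of the directions of the many occurrences of $T$ (it is contravariant) and see that the coherence $i_{T(X)}=T(i_X)^{-1}$ is precisely what places $g_X(\theta(x))$ in the group of $1$-cocycles. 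The auxiliary relation $g_{T(X)}\circ T=\tau\circ g_X$, which rests on the compatibility of $g$ with the unit isomorphisms, is the other delicate point.
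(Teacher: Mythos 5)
Your proposal is correct and takes essentially the same route as the paper: your $\theta(x)$ is (up to inversion) the paper's obstruction $a(x)$, your cocycle identity $\tau\bigl(g_X(\theta(x))\bigr)=-g_X(\theta(x))$ and variation formula under $x\mapsto x\circ u$ reproduce algebraically the paper's two large commutative diagrams, and the exactness argument---promoting a $\tau$-invariant structure to a $T$-equivariant one when the class vanishes---is identical. The only presentational difference is that you isolate the relation $g_{T(X)}(T(u))=\tau\bigl(g_X(u)\bigr)$ as an explicit lemma, which the paper invokes implicitly when it writes $\tau(\alpha)=T(\alpha)=T(a)$.
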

\begin{proof}
The class of an object $X\in\G$ is {\em $\tau$-invariant} if there is a morphism $x:X\to T(X)$. Define $a = a(x)\in\Aut_\G(X)$ as the counterclockwise composition of the diagram \eqref{eco}.  Let $\alpha(x)\in\Aut_\G(I) =\pi_1(\G)$ be the corresponding (in the sense of remark \ref{acg}) automorphism of the monoidal unit $I$.  Then we have $\tau(\alpha)=T(\alpha)=T(a)$. 
Applying the functor $T$ to the diagram \eqref{eco} gives 
\begin{equation}\lb{eco2}
\xymatrix{
T(X)&&T^3(X)\ar[ll]_{T{\left(i_X\right)}}\\
&T^2(X)\ar[ul]^{T(x)}\ar[ur]_{T^2(x)}\\
}
\end{equation}

Note that $T(a)\in\Aut_\G(T(X))$ is the clockwise composition of the diagram \eqref{eco2}.  We claim that 
\begin{equation}\lb{taai}T(a)=xa^{-1}x^{-1}\end{equation}
In terms of elements of $\Aut_\G(I)$ we have $\tau(\alpha) = \alpha^{-1}$.

Consider the diagram
\begin{equation}\lb{taai1}
\xymatrix
{
X\ar[rrr]^{x}\ar[ddr]_{i_X}&&&T(X)\ar@/^10pt/[dl]^{i_{T(X)}}\ar[ddd]^{T(a)}\\
&&T^3(X)\ar@/^10pt/[ur]^(.3){T(i_X)}&\\
&T^2(X)\ar[ur]_{T^2(x)}\ar[drr]_{T(x)}&&\\
X\ar[uuu]^{a}\ar[rrr]_{x}&&&T(X)\\
}
\end{equation}
The rightmost cell of \eqref{taai1} commutes by definition of $T(a)$.  The bottom left cell commutes by definition of $a$.  The top centre cell commutes by naturality of $i$, and so the diagram commutes overall.  \eqref{taai} follows from commutativity of \eqref{taai1}.
Now let $X\in\G$ be $\tau$-invariant with an isomorphism $x:X\to T(X)$. Let $x':X\to T(X)$ be another isomorphism. Define $b\in\Aut(X)$  by the diagram
\[
\xymatrix{
X\ar[rr]^{x}&&T(X)\\
&X\ar[ul]^{b}\ar[ur]_{x'}\\
}
\]

It follows from the  diagram 
\[
\xymatrix
{
X\ar[rrrr]^{a(x')}\ar[dr]_{b}\ar[ddd]_{x'}&&&&X\ar[ddd]^{i_X}\\
&X\ar[ddl]^{x}&&X\ar[ll]_{xT(b)x^{-1}}\ar[ur]_{a(x)}\ar[dl]^{x}&\\
&&T(X)\ar[dll]^{T(b)}&&\\
T(X)&&&&T^2(X)\ar[ull]^{T(x)}\ar[llll]^{T(x')}\\
}
\]
that $a(x') = a(x)xT(b)^{-1}x^{-1}b$.
This translates to $\alpha(x') = \alpha(x)\tau(\beta)^{-1}\beta$, where $\beta\in\Aut_\G(I)$ is the automorphism corresponding to $b\in \Aut_\G(X)$. 

Thus we have a map 
\[
\pi_0(\G)^\tau \to H^1(\bZ/2\bZ,\pi_1(\G))\ ,
\]
sending $X\in\pi_0(\G)^\tau$ to the class of $\alpha(x)$ in
\[
H^1(\bZ/2\bZ,\pi_1(\G))= \{\alpha\in\pi_1(\G)\ |\tau(\alpha)=\alpha^{-1}\}/\{\beta\tau(\beta)^{-1}\ |\ \beta\in\pi_1(G)\}\ .
\]

Now we check that this map is a homomorphism.
Given $X,Y\in\pi_0(\G)^\tau$, choose $x:X\to T(X)$ and $y:Y\to T(Y)$.  Then 
\[
\xymatrix{X\ot Y\ar[r]^(.4){x\ot y}&T(X)\ot T(Y)\ar[r]^{T_{X,Y}}&T(X\ot Y)}
\]
is a weak equivariance structure for $X\otimes Y$.  Compute $a(x\ot y)=a(x|y)$ according to the following diagram:
\[
\xymatrix
{
X\ot Y\ar@/^15pt/[dddr]^{i_{X}\ot i_{Y}}\ar@/^25pt/[rrrr]^{i_{X\ot Y}}\ar@/_30pt/[ddddr]^{x\ot y}\ar@/_60pt/[dddddr]_{x|y}&&&&T^2{\left(X\otimes Y\right)}\ar[ddll]_{T{\left(T_{X,Y}\right)}}\ar@/^60pt/[dddddlll]^{T(x|y)}\\
&&&&\\
&&T{\left(T(X)\ot T(Y)\right)}\ar[dl]_{T_{T(X),T(Y)}}\ar@/^15pt/[dddl]^{T(x\ot y)}&\\
&T^2(X)\ot T^2(Y)\ar[d]^{T(x)\ot T(y)}&&\\
&T(X)\ot T(Y)&&\\
&T{\left(X\ot Y\right)}\ar[u]^{T_{X,Y}}&&\\
}
\]
It follows from the above diagram that $a(x|y)=a(x)\ot a(y)$.  Finally, by the construction, the class of $\alpha(X)$ is trivial iff a $\tau$-invariant structure on $X$ can be promoted to a $T$-equivariant one.
\end{proof}

The effect of $T$ on $\pi_0(\Ex(\widehat{B},B))=\Ext(\widehat{B},B)$ is an involution $\tau=\pi_0(T):\Ext(\widehat{B},B)\to\Ext(\widehat{B},B)$, which sends an extension of $\widehat{B}$ by $B$ to its dual.  By factoring through the natural identification of the double dual with $B$, the dual extension becomes another extension of $\widehat{B}$ by $B$.

If we identify $\Hom(\widehat{B},B)$ with the tensor square $B^{\ot2}$,  the effect of $T$ on $\pi_1(\Ex(\widehat{B},B))$ becomes the permutation involution.

\begin{remark}\lb{h1}
The above proposition gives an exact sequence
\[
\xymatrix{0 \ar[r] & \Ext(\widehat{B},B)_0^\tau\ar[r] & \Ext(\widehat{B},B)^\tau \ar[r] & H^1(\bZ/2\bZ, B^{\ot2})}
\]
According to the example \ref{additive},   $H^1(\bZ/2\bZ, B^{\ot2})\simeq B_2$ functorially.  
This gives an exact sequence
\[
\xymatrix{0 \ar[r] & \Ext(\widehat{B},B)_0^\tau\ar[r] & \Ext(\widehat{B},B)^\tau \ar[r] & B_2}
\]

\end{remark}

%%%%%%%%%%%%%%%%%%%%%

\subsection{Comparing $\Lex(B)$ with $\Ex(\widehat{B},B)$}\lb{cle}

Define a functor $F:\Lex(B)\to\mathcal{E}xt(\widehat{B},B)$ by 
\[
F(A,q,\iota)=\left(B\overset{\iota}{\longrightarrow}A\overset{\pi}{\longrightarrow}\widehat{B}\right),
\]
where $\left(\pi(a)\right)(x)=q(a+\iota(x))-q(a)$ for $a\in A,x\in B$.  
Note that $F$ is monoidal with the obvious monoidal structure.

The functor $F$ induces a homomorphism of abelian groups $\phi:Lex(B) \to\Ext(\widehat{B},B)$.  
First we examine the image of $\phi$. 
\begin{lemma}\lb{teq}
The extension $F(A,q,\iota)$ is $T$-equivariant.
\end{lemma}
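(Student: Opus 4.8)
The plan is to build the required equivariance isomorphism directly out of the quadratic form. The polarisation $\sigma$ of $q$, written additively as $\sigma(a,b)=q(a+b)-q(a)-q(b)$, supplies a group homomorphism $x\colon A\to\widehat A$, $x(a)=\sigma(a,-)$; since $A$ is finite and $q$ is non-degenerate (so $\ker\sigma=0$), this $x$ is an isomorphism. I claim that $x$, regarded as a map between the middle terms $A$ and $\widehat A$ of $F(A,q,\iota)$ and $T(F(A,q,\iota))$, is the sought-for $T$-equivariance structure.

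First I would check that $x$ is a morphism in $\Ex(\widehat B,B)$ from $F(A,q,\iota)=\left(B\xrightarrow{\iota}A\xrightarrow{\pi}\widehat B\right)$ to $T(F(A,q,\iota))=\left(B\xrightarrow{\widehat\pi e}\widehat A\xrightarrow{\widehat\iota}\widehat B\right)$, i.e. that both triangles commute. Because $B$ is Lagrangian we have $q|_{\iota(B)}=0$ and $\sigma|_{\iota(B)\times\iota(B)}=0$, so the defining formula for $\pi$ simplifies to $\pi(a)(x)=q(a+\iota(x))-q(a)=\sigma(a,\iota(x))$. The triangle $\widehat\iota x=\pi$ is then immediate from the definition of $\pi$, since $(\widehat\iota x)(a)=x(a)\circ\iota=\left(x\mapsto\sigma(a,\iota(x))\right)=\pi(a)$. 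The triangle $x\iota=\widehat\pi e$ follows from the same formula together with the symmetry of $\sigma$, once the evaluation $e$ is unwound: $(x\iota)(b)=\left(a\mapsto\sigma(\iota(b),a)\right)=\left(a\mapsto\sigma(a,\iota(b))\right)=\left(a\mapsto\pi(a)(b)\right)=(\widehat\pi e)(b)$. (En route this reconfirms $\ker\pi=\iota(B)=B^\perp$, so $F(A,q,\iota)$ is genuinely an extension.)

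Second I would verify the coherence \eqref{eco}, which for this $T$ reads $x=T(x)\circ i_X$, where $i_X$ is the evaluation $e_A\colon A\to\widehat{\widehat A}$ and, by contravariance of $T$ on morphisms, $T(x)=\widehat x\colon\widehat{\widehat A}\to\widehat A$ is the transpose of $x$. Unwinding gives $(\widehat x\circ e_A)(a)=\left(b\mapsto x(b)(a)\right)=\left(b\mapsto\sigma(b,a)\right)$, whereas $x(a)=\left(b\mapsto\sigma(a,b)\right)$; these agree for all $a,b$ precisely because $\sigma$ is symmetric. Hence the defining triangle of $T$-equivariance commutes and the lemma follows.

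The individual computations are routine; the only real care is the bookkeeping of the double-dual identifications and of the direction reversal imposed by the contravariant $T$. The crux is to pin down that $T(x)$ is the transpose $\widehat x$ and that $i_X$ is the evaluation $e_A$, so that the coherence condition collapses to the bare symmetry of $\sigma$ rather than to some twisted variant of it. Once the maps are correctly identified, non-degeneracy of $q$ yields invertibility of $x$, the Lagrangian condition $q|_B=0$ yields the triangle identities, and symmetry of $\sigma$ yields the coherence.
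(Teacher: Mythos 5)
Your proposal is correct and follows essentially the same route as the paper: both take the map $A\to\widehat{A}$, $a\mapsto\sigma(a,-)$, induced by the polarisation of $q$, check via the Lagrangian condition $q|_{\iota(B)}=0$ that it is a morphism of extensions from $F(A,q,\iota)$ to $T(F(A,q,\iota))$, and conclude equivariance. The only difference is that you spell out the coherence triangle \eqref{eco} (reducing it to the symmetry of $\sigma$, after identifying $T(x)$ with $\widehat{x}$ and $i_X$ with $e_A$), a verification the paper dismisses as ``straightforward to check''.
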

\begin{proof}
Define $f:A\to\widehat{A}$ by $f(a)=\sigma(a,-)$ for $a\in A$.  Here, $\sigma$ is the polarisation of $q$.  Such $f$ makes the diagram
\[
\xymatrix{
B\ar[r]^{\iota}\ar[d]_{e}&A\ar[r]^{\pi}\ar[d]^f &\widehat{B}\ar@{=}[d]\\
\widehat{\widehat{B}}\ar[r]_{\widehat{\pi}}&\widehat{A}\ar[r]_{\widehat{\iota}}&\widehat{B} 
}
\]
commute. Indeed, for $x\in B$ and $a\in A$, we have $(f(\iota(x)))(a)=\sigma{\left(\iota(x),a\right)}=(\pi(a))(x)=e(x)(\pi(a))=\widehat{\pi}(e(x))(a)$, and $(\pi(a))(x)=\sigma(a,x)=(f(a))(x)=(\widehat{\iota}(f(a)))(x)$.
It is straightforward to check that $f$ is $T$-equivariant structure on $F(A,q,\iota)$. 
\end{proof}

Lemma \ref{teq} shows that the homomorphism $\phi$ lands in $\Ext(\widehat{B},B)^\tau$, with the image of $\phi$ being precisely $\Ext(\widehat{B},B)_0^\tau$.
Denote by $K(B)$ and $C(B)$ the kernel and the cokernel, respectively, of the homomorphism $\phi:Lex(B)\to\Ext(\widehat{B},B)^\tau$. Clearly, $K$ and $C$ are functors from the category of finite abelian groups to itself that fit into the exact sequence
\[
\xymatrix{0\ar[r] & K(B)\ar[r] & Lex(B)\ar[r]^(.4)\phi & \Ext(\widehat{B},B)^\tau\ar[r] & C(B)\ar[r] & 0}
\]

As the first step in describing the functors $K$ and $C$, we compute the polarisation (see Appendix \ref{pola}) of the functor $Lex$.  Note that for an extension 
\[
\xymatrix{
0\ar[r]&B\ar[r]^{\iota}&E\ar[r]^\pi&\widehat{D}\ar[r]&0
}
\]
the embedding $B\oplus D\hookrightarrow E\oplus\widehat{E}$ (the direct sum of $\iota$ and $\widehat{\pi}\circ e^{-1}$) is a Lagrangian subgroup of the quadratic group $(E\oplus\widehat{E},q_\mathrm{std})$.  In other words, $(E\oplus\widehat{E},q_\mathrm{std})$ becomes a Lagrangian extension of $B\op D$.  This defines a map $\upsilon:\Ext(\widehat{D},B)\to Lex(B\oplus D)$.

\begin{proposition}\lb{plx}
The sequence
\[
0\longrightarrow\Ext(\widehat{D},B)\overset{\upsilon}{\longrightarrow}Lex(B\oplus D)\overset{\kappa}{\longrightarrow}Lex(B)\oplus Lex(D)\longrightarrow 0\ ,
\]
where $\kappa:Lex(B\oplus D)\to Lex(B)\oplus Lex(D)$ is the canonical mapping, is  exact.
\end{proposition}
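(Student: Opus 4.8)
The plan is to verify exactness at all three spots of the sequence, using as the central computational tool Remark \ref{fsh}, which evaluates $\Lex$ on a surjection as an isotropic contraction. Write $p_B:B\oplus D\to B$ and $p_D:B\oplus D\to D$ for the projections, so that the canonical map is $\kappa=(\Lex(p_B),\Lex(p_D))$. Since $\ker(p_B)=D$ and $\ker(p_D)=B$, Remark \ref{fsh} identifies $\Lex(p_B)(A,q,\iota)$ with the contraction $(\iota(D)^\perp/\iota(D),\overline q)$ and $\Lex(p_D)(A,q,\iota)$ with $(\iota(B)^\perp/\iota(B),\overline q)$, orthogonal complements being taken inside $A$ with respect to the polarisation $\sigma$ of $q$. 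Throughout I will abbreviate $\iota(B)$ and $\iota(D)$, both isotropic subgroups of $A$ whose (internal) sum is the Lagrangian $\iota(B\oplus D)$, so that $(\iota(B)\oplus\iota(D))^\perp=\iota(B)\oplus\iota(D)$ by self-duality of Lagrangians.

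For surjectivity of $\kappa$, given Lagrangian extensions $(A_1,q_1)$ of $B$ and $(A_2,q_2)$ of $D$, the product $(A_1\times A_2,\,q_1\times q_2)$ is a Lagrangian extension of $B\oplus D$ via $\iota_1\oplus\iota_2$. Because the product polarisation $\sigma_1\times\sigma_2$ is orthogonal and $\iota_2(D)$ is Lagrangian in $A_2$, the contraction along $\ker(p_B)=\iota_2(D)$ returns $(A_1,q_1)$, and symmetrically for $p_D$; hence $\kappa$ hits $((A_1,q_1),(A_2,q_2))$ and is onto. For $\kappa\circ\upsilon=0$, apply the same contraction formula to $\upsilon(E)=(E\oplus\widehat{E},q_\mathrm{std})$: computing $\ker(p_B)^\perp/\ker(p_B)$ inside the hyperbolic group yields, after the duality identification $\widehat{E}/\mathrm{im}(\widehat{\pi})\simeq\widehat{B}$, the standard hyperbolic Lagrangian extension of $B$, and likewise for $D$. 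This is a routine but duality-heavy bookkeeping step.

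The crux is exactness in the middle, $\ker\kappa\subseteq\mathrm{im}\,\upsilon$. The key observation is that a Lagrangian extension is trivial precisely when the embedded subgroup admits a Lagrangian complement (both Lagrangians being isotropic, the form is then forced to be $q_\mathrm{std}$). So for $(A,q,\iota)\in\ker\kappa$, triviality of $\Lex(p_D)(A)$ produces a Lagrangian complement in $\iota(B)^\perp/\iota(B)$, which lifts to a subgroup $E$ with $\iota(B)\subseteq E\subseteq\iota(B)^\perp$; the descended form being isotropic on $E/\iota(B)$ forces $q|_E=0$, so $E$ is itself Lagrangian, contains $\iota(B)$, and meets $\iota(D)$ trivially. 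Symmetrically, triviality of $\Lex(p_B)(A)$ yields a Lagrangian $P$ with $\iota(D)\subseteq P\subseteq\iota(D)^\perp$, transverse to $\iota(B)$. Then $E\cap P\subseteq\iota(B)^\perp\cap\iota(D)^\perp=(\iota(B)\oplus\iota(D))^\perp=\iota(B)\oplus\iota(D)$, and intersecting with the respective contraction conditions forces $E\cap P\subseteq\iota(B)\cap\iota(D)=0$; a dimension count $|E|\,|P|=|A|$ then gives $A=E\oplus P$ as transverse Lagrangians. Finally, the pairing $a\mapsto\sigma(a,\iota(0,-))$ exhibits $0\to B\simeq\iota(B)\to E\to\widehat{D}\to 0$ as an extension, $P$ realises its complementary Lagrangian $\simeq\widehat{E}$, and one checks $\upsilon$ of this extension is isomorphic to $(A,q,\iota)$.

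Injectivity of $\upsilon$ I would obtain from a retraction: composing with $\phi$ (equivalently with $F$) and projecting the target $\Ext(\widehat{B\oplus D},B\oplus D)=\bigoplus_{\ast,\bullet}\Ext(\widehat{\ast},\bullet)$ onto its $\Ext(\widehat{D},B)$ summand recovers the class $[E]$, so $\upsilon$ is split monic. I expect the main obstacle to be the middle-exactness step above: not the existence of the two transverse Lagrangians $E$ and $P$, which falls out cleanly once triviality is rephrased as the existence of a Lagrangian complement, but the final verification that $\upsilon([E])\simeq(A,q,\iota)$ \emph{as Lagrangian extensions of} $B\oplus D$, i.e. matching simultaneously the embedding of $B$ into $E$, the embedding of $D$ into $P\simeq\widehat{E}$, and the quadratic form under $A=E\oplus P\simeq E\oplus\widehat{E}$. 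This is where the duality identifications $e:D\to\widehat{\widehat D}$ and $\widehat{\pi}$ from the definition of $\upsilon$ must be tracked carefully.
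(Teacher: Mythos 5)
Your proof is correct and its core---the exactness in the middle---follows essentially the same route as the paper: from triviality of the two isotropic contractions you produce two transverse Lagrangians $E$ and $P$ (the paper obtains them as kernels $\ker(\widetilde{p}_1)$, $\ker(\widetilde{w}_1)$ of lifted projections, which is the same thing as taking preimages of Lagrangian complements), prove $A=E\oplus P$ by the same intersection-inside-$(B\oplus D)$ argument plus order count, establish isotropy via the descended form, and identify $P$ with $\widehat{E}$ through the pairing so that $(A,q,\iota)\simeq\upsilon([E])$. The only difference is that you also verify the two end terms---surjectivity of $\kappa$ via products of Lagrangian extensions and injectivity of $\upsilon$ via the retraction through $\phi$ onto the $\Ext(\widehat{D},B)$ summand---which the paper's proof leaves implicit; both of those added arguments are sound.
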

\begin{proof}
By remark \ref{fsh}, $\kappa$ maps the class of a Lagrangian extension $(A,q,\iota)$ (of $B\oplus D$) into the pair of Lagrangian extensions
\[
B \subset D^\perp/D,\qquad  D\subset B^\perp/B\ .
\]
For $(A,q,\iota)$  in the kernel of $\kappa$, we have 
\[
B\subset D^\perp/D\ \simeq\ B\oplus\widehat{B}, \qquad D\subset B^\perp/B\ \simeq\  D\oplus\widehat{D}\ ,
\]
where $B$ (respectively $D$) is embedded as the first summand and is Lagrangian with respect to the standard quadratic function on $B\oplus\widehat{B}$ (respectively $D\oplus\widehat{D}$).
Now we want to show that the Lagrangian extension $(E\oplus\widehat{E},q_\mathrm{std})$ of $B\op D$ (the effect of the map $\upsilon$ on an extension $B\to E\to \widehat{D}$) is always in the kernel of $\kappa$.  Indeed, the orthogonal complement of $D$ in $E\oplus\widehat{E}$ has the form
\[
D^\perp=\left\{\left.\left(e,\varepsilon\right)\in E\oplus\widehat{E}\ \right|\sigma_\mathrm{std}((e,\varepsilon),(0,\widehat{\pi}(d))=0\ \forall d\in D\right\}\ =\ B\oplus\widehat{E}\ .
\]
Furthermore,
\[
D^\perp/D\simeq (B\oplus\widehat{E})/{\left(0\oplus\widehat{\pi}(D)\right)}\simeq B\oplus\widehat{B}\ .
\]
Analogous results hold for $B^\perp/B$.  This shows that $\mathrm{im}(\upsilon)\subseteq\ker(\kappa)$.
To see that $\mathrm{im}(\upsilon)=\ker(\kappa)$, we show that, for $(A,q,\iota)\in\ker(\kappa)$, there exists an extension $B\overset{\iota}{\longrightarrow}E\overset{\pi}{\longrightarrow}\widehat{D}$ such that
\[
(A,q,\iota)\simeq\left(E\oplus\widehat{E},q_\mathrm{std},\iota\oplus\widehat{\pi}\right)\ .
\]
We start by extracting such $E$ out of $A$.  Since $(A,q,\iota)$ is in the kernel of $\kappa$, the isotropic contraction along $B$ is trivial as a Lagrangian extension of $D$:   $B^\perp/B\simeq D\oplus\widehat{D}$.  By lifting the corresponding projections $p_1:B^\perp/B\twoheadrightarrow D$, $p_2:B^\perp/B\twoheadrightarrow\widehat{D}$, we obtain surjections $\widetilde{p}_1:B^\perp\twoheadrightarrow D$ and  $\widetilde{p}_2:B^\perp\twoheadrightarrow\widehat{D}$.  Now define $E$ to be $\ker(\widetilde{p}_1)$.  Since $\ker(\widetilde{p}_1)\cap\ker(\widetilde{p}_2)=B$, the sequence
\begin{equation}\lb{ee}
B\overset{\iota}{\longrightarrow}E\overset{\widetilde{p}_2}{\longrightarrow}\widehat{D}
\end{equation}
is short exact.  Similarly, $D^\perp/D\simeq B\oplus\widehat{B}$, and so we have projections $w_1:D^\perp/D\twoheadrightarrow B$, $w_2:D^\perp/D\twoheadrightarrow\widehat{B}$ and corresponding surjections $\widetilde{w}_1:D^\perp\twoheadrightarrow B$, $\widetilde{w}_2:D^\perp\twoheadrightarrow\widehat{B}$.  
Define $E'=\ker(\widetilde{w}_1)$.  We then have the analogous short exact sequence
\begin{equation}\lb{ee'}
D\overset{\iota'}{\longrightarrow}E'\overset{\widetilde{w}_2}{\longrightarrow}\widehat{B}\ .
\end{equation}

By their definitions, $E$ and $E'$ are subgroups of $A$.  We show that $A$ is their direct sum. Since the orders of $E$ and $E'$ square to the order of $A$, all we need to show is that $E\cap E'=0$.  Let $x\in E\cap E'$.  Then $x\in B^\perp\cap D^\perp=(B\oplus D)^\perp=B\oplus D$ since $B\oplus D$ is Lagrangian in $A$.  Since $\widetilde{p}_1|_{B\oplus D}$ is the second projection and $\widetilde{w}_1|_{B\oplus D}$ is the first, $\widetilde{p}_1(x)=\widetilde{w}_1(x)=0$ implies that $x=0$.  

Now we show that $E$ and $E'$ are Lagrangian subgroups of the quadratic group $A$.  Together with the decomposition $E\op E'=A$, that will give us a non-degenerate pairing between $E$ and $E'$, identifying $E'$ with $\widehat E$.
What we are actually going to show is that $E$ is isotropic (this is sufficient, since $|E|^2=|A|$).  Recall that the kernel of the restriction of $q$ to $B^\perp$ is $B$, so for any $x\in B^\perp$ we have $q(x)=q_{B^\perp/B}(\bar{x})$, where $\bar{x}$ is the coset of $x$ modulo $B$.  Now for $x\in E=\ker(\widetilde{p}_1)$, the coset $\bar{x}$ lies in $\ker(p_1)$, which is an isotropic subgroup of $B^\perp/B$. Hence $q(x)=q_{B^\perp/B}(\bar{x})=0$.   Similarly for $E'$.
The last thing we need to check is that, upon identification between $E'$ and $\widehat E$, the extension \eqref{ee'} is the dual of \eqref{ee}.  In other words, we need $\widehat{\iota}=\widetilde{w}_2$ and $\widehat{\iota'}=\widetilde{p}_2$, which follows immediately from the definitions.
\end{proof}

\begin{corollary}
The functors $K$ and $C$ are additive.
\end{corollary}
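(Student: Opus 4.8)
The plan is to deduce additivity of $K$ and $C$ from the polarisation sequence of Proposition \ref{plx}, together with the analogous (and far easier) polarisation of the functor $B\mapsto\Ext(\widehat{B},B)^\tau$, by comparing the two via $\phi$ and applying the snake lemma. Concretely, I would assemble a commutative ladder whose rows are the short exact sequences measuring the failure of additivity of $Lex$ and of $\Ext(\widehat{-},-)^\tau$, and whose three vertical maps are induced by $\phi$; once the left-hand vertical map is seen to be an isomorphism, the snake lemma forces the kernel functor $K$ and cokernel functor $C$ to be additive.

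The top row is exactly the sequence of Proposition \ref{plx},
\[
0\longrightarrow\Ext(\widehat{D},B)\overset{\upsilon}{\longrightarrow}Lex(B\oplus D)\overset{\kappa}{\longrightarrow}Lex(B)\oplus Lex(D)\longrightarrow 0\ .
\]
For the bottom row I would use biadditivity of $\Ext$ to split $\Ext(\widehat{B\oplus D},B\oplus D)$ into the four blocks $\Ext(\widehat{B},B),\Ext(\widehat{B},D),\Ext(\widehat{D},B),\Ext(\widehat{D},D)$, and then analyse the duality involution $\tau$ on this decomposition. The two diagonal blocks are preserved and contribute $\Ext(\widehat{B},B)^\tau\oplus\Ext(\widehat{D},D)^\tau$, while $\tau$ interchanges $\Ext(\widehat{B},D)$ with $\Ext(\widehat{D},B)$, so the off-diagonal $\tau$-invariants form a single copy of $\Ext(\widehat{D},B)$ (an invariant element being determined by its $\Ext(\widehat{D},B)$-component). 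This yields a naturally split short exact sequence
\[
0\longrightarrow\Ext(\widehat{D},B)\overset{j}{\longrightarrow}\Ext(\widehat{B\oplus D},B\oplus D)^\tau\overset{p}{\longrightarrow}\Ext(\widehat{B},B)^\tau\oplus\Ext(\widehat{D},D)^\tau\longrightarrow 0\ ,
\]
where $p$ is the projection onto the diagonal blocks and $j$ is the anti-diagonal inclusion.

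Next I would verify that the ladder commutes. The right-hand square is simply naturality of $\phi\colon Lex\to\Ext(\widehat{-},-)^\tau$ along the two projections $p_B,p_D$, since $\kappa$ is induced by these projections (remark \ref{fsh}) and $p$ is assembled from the same functorial maps; exactness of the top row then forces $p\circ\phi\circ\upsilon=0$, so $\phi\circ\upsilon$ factors through $j$ and defines a left-hand vertical map $f\colon\Ext(\widehat{D},B)\to\Ext(\widehat{D},B)$. The crux is to identify $f$. Here $\upsilon(\xi)$ is the Lagrangian extension $(E\oplus\widehat{E},q_\mathrm{std})$ of $B\oplus D$ attached to $\xi=(B\overset{\iota}{\to}E\overset{\pi}{\to}\widehat{D})$, and $\phi=F$ reads off the projection $\bigl(\Pi(a)\bigr)(x)=q_\mathrm{std}(a+\iota'(x))-q_\mathrm{std}(a)$. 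Unwinding these definitions, a short computation (using $\pi\circ\iota=0$) gives $\Pi(e_0,\varphi)=\bigl(\widehat{\iota}(\varphi),\pi(e_0)\bigr)$, so the resulting extension has vanishing diagonal blocks, $\Ext(\widehat{D},B)$-component equal to $\xi$, and $\Ext(\widehat{B},D)$-component equal to $\tau(\xi)$. Hence $f$ is the identity.

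With $f$ an isomorphism, the snake lemma applied to the ladder yields $\ker f=\mathrm{coker}\,f=0$, and therefore natural isomorphisms $K(B\oplus D)\cong K(B)\oplus K(D)$ and $C(B\oplus D)\cong C(B)\oplus C(D)$; together with $K(0)=C(0)=0$, this is exactly the claimed additivity. I expect the main obstacle to be the identification of $f$: one must push the duality bookkeeping through the definitions of $q_\mathrm{std}$, the polarisation embedding, and the projection defining $F$ carefully enough to confirm both that the diagonal components vanish and that the surviving component is precisely $\xi$ rather than a twist of it.
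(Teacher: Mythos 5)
Your proposal is correct and takes essentially the same route as the paper: the paper's one-line proof likewise computes the polarisation of $B\mapsto\Ext(\widehat{B},B)^\tau$ to be $\Ext(\widehat{D},B)$, asserts that $\phi$ induces an isomorphism of polarisations (your identification $f=\mathrm{id}$ via the four-block analysis of $\tau$), and finishes with $K(0)=C(0)=0$. The only organizational difference is that you apply the snake lemma directly to the two-row ladder, whereas the paper appeals to exactness of the polarisation functor (part 2 of Proposition \ref{b.one}), which is itself proved by the same snake-lemma argument.
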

\begin{proof}
The polarisation of the functor $B\mapsto \Ext(\widehat{B},B)^\tau$ is $\Ext(\widehat{D},B)$.  Moreover, the natural transformation $\phi$ induces an isomorphism of polarisations.  Finally, note that  $K(0) = C(0) = 0$.
\end{proof}

First we look at the kernel functor $K$.  
\begin{lemma}
The functor $K$ fits into the exact sequence
\begin{equation}\lb{sequence}
Hom_\bZ(\widehat B\ot \widehat B,\bQ/\bZ)\to Q(\widehat B,\bQ/\bZ)\to K(B)\to 0
\end{equation}
\end{lemma}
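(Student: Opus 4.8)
The plan is to identify $K(B)=\ker(\phi)$ explicitly as a quotient of the group $Q(\widehat B,\bQ/\bZ)$ of quadratic functions on $\widehat B$. Since $\phi$ sends a Lagrangian extension to the class in $\Ext(\widehat B,B)$ of its underlying group extension $B\to A\to\widehat B$, an element of $K(B)$ is the class of a Lagrangian extension whose underlying group extension splits. Transporting $q$ along a splitting, I may therefore assume every class in $K(B)$ is represented by a triple $(B\op\widehat B,q,\iota_{\mathrm{std}})$ with $\iota_{\mathrm{std}}(b)=(b,0)$ and with the induced projection $\pi$ equal to the standard projection onto $\widehat B$. A quadratic function on $B\op\widehat B$ is the same datum as a triple $(q_1,q_2,\beta)$, where $q_1=q|_B$, $q_2=q|_{\widehat B}$, and $\beta\colon B\times\widehat B\to\bQ/\bZ$ is a bilinear cross term. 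The requirement that $B$ be isotropic forces $q_1=0$, and the requirement that $\pi$ be the standard projection forces $\beta$ to be the evaluation pairing $\beta(b,\chi)=\chi(b)$; conversely, for every $q_2\in Q(\widehat B,\bQ/\bZ)$ the resulting polarisation is non-degenerate (its off-diagonal block is the perfect evaluation pairing), so $B$ is automatically Lagrangian. Thus the assignment $q_2\mapsto(B\op\widehat B,q_{q_2},\iota_{\mathrm{std}})$, with $q_{q_2}(b,\chi)=q_2(\chi)+\chi(b)$, produces a surjection $Q(\widehat B,\bQ/\bZ)\to K(B)$.

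Next I would check that this surjection is a group homomorphism by computing the sum $\boxplus$ of two such extensions directly. The antidiagonal $\overline\delta(B)\subset(B\op\widehat B)^2$ is orthogonal precisely to those pairs $((b_1,\chi_1),(b_2,\chi_2))$ with $\chi_1=\chi_2$, so its orthogonal complement has order $|B|^3$, and contracting produces representatives of the form $((b,\chi),(0,\chi))$. Sending such a representative to $(b,\chi)\in B\op\widehat B$ identifies the isotropic contraction with $B\op\widehat B$, and evaluating $q_{q_2}\times q_{q_2'}$ on the representative gives $(q_2+q_2')(\chi)+\chi(b)$, i.e. $q_{q_2+q_2'}$. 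Hence $\boxplus$ corresponds to addition of quadratic functions on $\widehat B$, and the map is a homomorphism.

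Finally I would compute the kernel. A morphism in $\Lex(B)$ between $(B\op\widehat B,q_{q_2},\iota_{\mathrm{std}})$ and $(B\op\widehat B,q_{q_2'},\iota_{\mathrm{std}})$ is an isomorphism $\omega$ fixing $B$ (preserving $\iota$) and preserving $q$; since $\pi$ is a derived datum of $q$ and $\iota$, such $\omega$ automatically preserves $\pi$. Writing $\omega(b,\chi)=(b+\psi(\chi),\theta(\chi))$, compatibility with $\pi$ forces $\theta=\mathrm{id}$, leaving only a shear $\psi\in\Hom(\widehat B,B)$. Imposing $q_{q_2'}\circ\omega=q_{q_2}$ then yields $q_2(\chi)-q_2'(\chi)=\chi(\psi(\chi))$. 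Since $\Hom(\widehat B,B)\cong\Hom_\bZ(\widehat B\ot\widehat B,\bQ/\bZ)$ via $\psi\mapsto\bigl((\chi,\chi')\mapsto\chi'(\psi(\chi))\bigr)$, and $\chi\mapsto\chi(\psi(\chi))$ is exactly the diagonal of the associated bilinear form, two functions $q_2,q_2'$ give equal classes in $K(B)$ precisely when $q_2-q_2'$ lies in the image of the diagonal map $\Hom_\bZ(\widehat B\ot\widehat B,\bQ/\bZ)\to Q(\widehat B,\bQ/\bZ)$. This identifies the kernel of $Q(\widehat B,\bQ/\bZ)\to K(B)$ with that image and yields the asserted exact sequence.

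The step I expect to be the main obstacle is the kernel computation: pinning down that morphisms in $\Lex(B)$ between two such representatives must be the identity on the $\widehat B$-factor (forced by compatibility with the canonical projection, which itself relies on $\pi$ being determined by $q$ and $\iota$) and are otherwise exactly the shears $\psi\colon\widehat B\to B$. Once this automorphism group is correctly identified, matching $\chi\mapsto\chi(\psi(\chi))$ with the diagonal of a bilinear form on $\widehat B$, and hence with the image of $\Hom_\bZ(\widehat B\ot\widehat B,\bQ/\bZ)$, is routine.
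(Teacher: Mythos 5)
Your proposal is correct and follows essentially the same route as the paper: use triviality of the class in $\Ext(\widehat{B},B)$ to reduce to $A=B\oplus\widehat{B}$ with $\iota$ the standard inclusion, write $q(b,\chi)=\chi(b)+\tilde{q}(\chi)$ to obtain the surjection $Q(\widehat{B},\bQ/\bZ)\to K(B)$, and identify its kernel with the diagonal restrictions of bilinear forms on $\widehat{B}$. The only difference is one of detail: you explicitly verify the homomorphism property (via the contraction of the antidiagonal) and the kernel identification (via classifying the shear automorphisms $\psi:\widehat{B}\to B$), two steps the paper asserts without proof.
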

\begin{proof}
Let $(A,q,\iota)\in K(B)$.  Then, as an abelian group, $A$ can be identified with $B\times\widehat{B}$.  Moreover, under this identification, $\iota$ becomes the canonical inclusion. Furthermore, the quadratic function $q$ is such that 
$q(b+b',\chi)-q(b',\chi) = \pi(b',\chi)(b)=\chi(b)$.  Denote by $\tilde{q}$ the restriction of $q$ to $\widehat{B}$.  Then for $b\in B,\chi\in\widehat{B}$, we have $q(b,\chi)=\chi(b)+\tilde{q}(\chi)$  Noting that $\chi(b)=q_\mathrm{std}(b,\chi)$, one can write $(A,q,\iota)\simeq (B\times\widehat{B},q_\mathrm{std}+\tilde{q},\iota)$ as Lagrangian extensions of $B$.  In other words, the map $Q(\widehat{B},\bQ/\bZ)\to K(B)$ given by $\tilde{q}\mapsto(B\times\widehat{B},q_\mathrm{std}+\tilde{q},\iota)$ is an epimorphism of groups.
Note that the kernel of the map $Q(\widehat{B},\bQ/\bZ)\to K(B)$ is the image of the map $Hom_\bZ(\widehat B\ot \widehat B,\bQ/\bZ)\to Q(\widehat{B},\bQ/\bZ)$ sending a character $\xi$ on $\widehat B\ot \widehat B$  to the quadratic function $q(\chi) = \xi(\chi\ot\chi))$.  
\end{proof}

According to the example \ref{J} of the appendix, $K(B)=J(\widehat B)$.
\void{
For $B$ of odd order, $K(B)=0$.  For $B=\bZ/2^\ell\bZ$, we have $Q(\widehat B,\bQ/\bZ)\simeq\bZ/2^{\ell+1}\bZ$ and $Hom_\bZ(\widehat B\ot \widehat B,\bQ/\bZ)\simeq\bZ/2^\ell\bZ$,  so the sequence \eqref{sequence} forces $K(B)\simeq\bZ/2\bZ$.  The effect on morphisms is as follows:  on isomorphisms between cyclic 2-groups, it is the identity, while on proper monomorphisms and proper epimorphisms between cyclic 2-groups it is zero.  Consequently, the functor $K$ is isomorphic to the functor $H$ from Appendix \ref{pola}.}\\

In the rest of the section, we examine the cokernel functor $C$.

\begin{remark}\lb{vc}
Note that $C(\bZ/2^\ell\bZ)\simeq \bZ/2\bZ$. Indeed, a generator of $\Ext(\bZ/2^\ell\bZ,\bZ/2^\ell\bZ) \simeq \bZ/2^\ell\bZ$ has the form
\[
\bZ/2^\ell\bZ\longrightarrow\bZ/2^{2\ell}\bZ\longrightarrow\bZ/2^\ell\bZ
\]
Example \ref{lexz} shows that $\bZ/2^{2\ell}\bZ$ is not realizable as a Lagrangian extension of $\bZ/2^\ell\bZ$.  Thus the map $Lex(\bZ/2^\ell\bZ)\to\Ext(\bZ/2^\ell\bZ,\bZ/2^\ell\bZ)$ is not surjective.
Note also that any extension of the form
\begin{equation}\lb{lexz2a}
\bZ/2^\ell\bZ\longrightarrow\bZ/2^{2\ell-1}\bZ \times \bZ/2\bZ\longrightarrow\bZ/2^\ell\bZ
\end{equation}
is twice a generator of $\Ext(\bZ/2^\ell\bZ,\bZ/2^\ell\bZ) \simeq \bZ/2^\ell\bZ$.
Now, example \ref{lexz} shows that the middle term of \eqref{lexz2a} {\em is} realizable as a Lagrangian extension of $\bZ/2^\ell\bZ$, and moreover, that the image of the map $Lex(\bZ/2^\ell\bZ)\to\Ext(\bZ/2^\ell\bZ,\bZ/2^\ell\bZ)$ is $2\bZ/2^\ell\bZ$.
Thus the cokernel of the map $Lex(\bZ/2^\ell\bZ)\to\Ext(\bZ/2^\ell\bZ,\bZ/2^\ell\bZ)$ must be isomorphic to $\bZ/2\bZ$.
\end{remark}

\begin{lemma}
The functor $C$ is isomorphic to the functor $B\mapsto H^1{\left(\bZ/2\bZ,B^{\ot2}\right)}$. 
\end{lemma}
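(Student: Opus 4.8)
The plan is to identify $C(B)$ with the image of the connecting homomorphism furnished by the proposition on $T$-equivariant objects, and then to pin down the resulting monomorphism by an additivity argument. First I would recall from Lemma~\ref{teq} and the remark following it that $\mathrm{im}(\phi)=\Ext(\widehat B,B)_0^\tau$, so that by definition
\[
C(B)=\Ext(\widehat B,B)^\tau/\mathrm{im}(\phi)=\Ext(\widehat B,B)^\tau/\Ext(\widehat B,B)_0^\tau\ .
\]
Applying the proposition relating $\pi_0(\G)^\tau$ and $H^1(\bZ/2\bZ,\pi_1(\G))$ to $\G=\Ex(\widehat B,B)$, for which $\pi_0(\G)=\Ext(\widehat B,B)$ and $\pi_1(\G)\simeq B^{\ot2}$, yields an exact sequence whose last map $\Ext(\widehat B,B)^\tau\to H^1(\bZ/2\bZ,B^{\ot2})$ has kernel exactly $\Ext(\widehat B,B)_0^\tau$. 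Hence this map descends to a natural monomorphism
\[
\psi_B:C(B)\ \hookrightarrow\ H^1(\bZ/2\bZ,B^{\ot2})\ .
\]
The entire remaining task is to show that $\psi_B$ is surjective.

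Next I would exploit additivity. Both source and target are additive functors of $B$: additivity of $C$ was established above, and $H^1(\bZ/2\bZ,B^{\ot2})\simeq B_2$ functorially by Remark~\ref{h1}. Since $\psi$ is a natural transformation between additive functors and every finite abelian group decomposes as a direct sum of cyclic $p$-groups, it suffices to verify that $\psi_B$ is an isomorphism when $B=\bZ/p^\ell\bZ$.

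For such cyclic $B$ the tensor square $B^{\ot2}\simeq\bZ/p^\ell\bZ$ carries the trivial swap involution, since the transposition $a\ot b\mapsto b\ot a$ acts as the identity on $B\ot B$ for $B$ cyclic. Thus $H^1(\bZ/2\bZ,B^{\ot2})$ computes as the $2$-torsion $(\bZ/p^\ell\bZ)[2]$, which vanishes for odd $p$ and equals $\bZ/2\bZ$ for $p=2$. In the odd case both sides are zero, so $\psi_B$ is an isomorphism trivially. For $p=2$, Remark~\ref{vc} gives $C(\bZ/2^\ell\bZ)\simeq\bZ/2\bZ$, so $\psi_{\bZ/2^\ell\bZ}$ is an injection $\bZ/2\bZ\hookrightarrow\bZ/2\bZ$, hence an isomorphism. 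This makes $\psi$ an isomorphism on all cyclic $p$-groups, and additivity upgrades it to a natural isomorphism $C\simeq H^1(\bZ/2\bZ,(-)^{\ot2})$.

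The only genuine content beyond formal bookkeeping is the surjectivity at $p=2$, and this is precisely what Remark~\ref{vc} supplies through the non-realizability of $\bZ/2^{2\ell}\bZ$ as a Lagrangian extension of $\bZ/2^\ell\bZ$; everything else follows formally from the $T$-equivariance proposition and additivity. I therefore expect the main obstacle to be the $p=2$ order count, with the $H^1$ computation and the reduction to cyclic groups being routine.
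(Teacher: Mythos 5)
Your proposal is correct and follows essentially the same route as the paper's proof: both identify $C(B)$ with $\Ext(\widehat B,B)^\tau/\Ext(\widehat B,B)_0^\tau$, use the exactness of the $T$-equivariance sequence to obtain the natural embedding $C(B)\hookrightarrow H^1(\bZ/2\bZ,B^{\ot2})$, reduce by additivity to cyclic $p$-groups, and conclude the $p=2$ case by comparing orders via Remark \ref{vc}. No gaps; the argument matches the paper's step for step.
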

\begin{proof}
Since the image of $\phi$ coincides with $\Ext(\widehat B,B)^\tau_0$, we have a short exact sequence
\[
\xymatrix{0 \ar[r] & \Ext(\widehat B,B)^\tau_0 \ar[r]& \Ext(\widehat B,B)^\tau \ar[r] & C(B) \ar[r] & 0}
\]
Thus the homomorphism $\Ext(\widehat B,B)^\tau\to H^1(\bZ/2\bZ,B^{\ot 2})$ defined in section \ref{cge} factors through an embedding $C(B)\to H^1(\bZ/2\bZ,B^{\ot 2})$. 
By remark \ref{h1}, this is a natural transformation of additive functors in $B$. To show that it is an isomorphism, it suffices to check it for cyclic $B$. It is obvious for $B$ of odd order.
Remark \ref{h1} says that $H^1(\bZ/2\bZ,B^{\ot 2})$ is isomorphic (as an abelian group) to $\bZ/2\bZ$.
Finally, remark \ref{vc} shows that  $C(\bZ/2^\ell\bZ)$ is also isomorphic to $\bZ/2\bZ$, making the map $C(B)\to H^1(\bZ/2\bZ,B^{\ot 2})$ an isomorphism.
\end{proof}

Finally, we summarize the results of this section in the following theorem.

\begin{theorem}
For any finite abelian group $B$, the following sequence is exact:
\[
0\longrightarrow H(B)\longrightarrow Lex(B)\overset{\phi}{\longrightarrow} \Ext(\widehat{B},B)^\tau\longrightarrow H(B)\longrightarrow 0
\]
\end{theorem}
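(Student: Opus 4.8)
The plan is to read the asserted four-term sequence straight off the definitions of $K$ and $C$, and then to substitute the two computations performed in the immediately preceding lemmas. Recall that $K(B)$ and $C(B)$ were introduced precisely as the kernel and cokernel of $\phi:Lex(B)\to\Ext(\widehat{B},B)^\tau$, so that the sequence
\[
\xymatrix{0\ar[r] & K(B)\ar[r] & Lex(B)\ar[r]^(.4)\phi & \Ext(\widehat{B},B)^\tau\ar[r] & C(B)\ar[r] & 0}
\]
is exact by construction. Hence the whole content of the theorem is the identification of both outer terms with the single functor $H$ from Appendix \ref{pola}. The proof thus splits into matching $K(B)$ with $H(B)$ on the left and $C(B)$ with $H(B)$ on the right, after which one merely rewrites the sequence.

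For the left-hand term I would invoke the lemma producing the exact sequence \eqref{sequence} together with example \ref{J}, which gives $K(B)=J(\widehat{B})$. Since $K$ and $C$ are additive (as recorded in the corollary above) and $H$ is additive by its construction in the appendix, it suffices to compare $J(\widehat{(-)})$ with $H$ on cyclic groups and on the generating morphisms between them. On a cyclic $2$-group both functors return $\bZ/2\bZ$, and both vanish on cyclic groups of odd prime-power order; on isomorphisms each acts as the identity, and on proper monomorphisms and epimorphisms each acts as zero. Agreement on these building blocks yields a natural isomorphism $K(B)\cong H(B)$.

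For the right-hand term I would use the lemma identifying $C$ with the functor $B\mapsto H^1(\bZ/2\bZ,B^{\ot 2})$, and then remark \ref{h1} (which rests on example \ref{additive}) to rewrite $H^1(\bZ/2\bZ,B^{\ot 2})\cong B_2$ naturally in $B$. Reducing again to cyclic groups by additivity, $B_2$ returns $\bZ/2\bZ$ on a cyclic $2$-group and $0$ otherwise, with the same behaviour on isomorphisms, monomorphisms and epimorphisms as $H$; this gives a natural isomorphism $C(B)\cong H(B)$. Feeding the two isomorphisms into the defining sequence produces exactly the exact sequence claimed in the statement.

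The only genuinely non-formal point, and the step I expect to demand the most care, is the \emph{naturality} of these identifications: one must ensure that the \emph{same} functor $H$ occupies both ends, not merely that the two ends are abstractly isomorphic groups. The delicate ingredient is the dualisation $\widehat{(-)}$ occurring in $K(B)=J(\widehat{B})$, which is contravariant and would ordinarily reverse induced maps; what rescues the argument is that $H$ (equivalently $J$) is insensitive to this, since it collapses every non-isomorphism between cyclic $2$-groups to zero and sends isomorphisms to the identity, so $J\circ\widehat{(-)}\cong J\cong H$ as covariant functors. Once this compatibility is verified on cyclic $2$-groups---the only place where the functors are nonzero---the remainder is the routine additive bookkeeping already supplied by the preceding lemmas and by the appendix.
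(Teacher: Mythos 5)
Your proposal is correct and takes essentially the same route as the paper, which states this theorem as a summary of the section: one splices the defining kernel--cokernel sequence of $\phi$ together with the lemma giving $K(B)=J(\widehat{B})$ (via sequence \eqref{sequence} and example \ref{J}) and the lemma identifying $C$ with $B\mapsto H^1{\left(\bZ/2\bZ,B^{\ot2}\right)}=H(B)$. Since the statement concerns exactness for a fixed $B$, only group-level identifications of the outer terms with $H(B)$ are required (additivity plus the computation $\bZ/2\bZ$ on cyclic $2$-groups and $0$ on odd ones), so your careful naturality discussion, while sensible, goes beyond what the theorem as stated demands.
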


%%%%%%%%%%%%%%%%%%%%%

\appendix

\section{Lagrangian algebras in pointed categories}\lb{la}

\setcounter{subsection}{1}

Here, we describe the homomorphism $Lex(L)\to H^3(\widehat A,k^*)$ explicitly by giving a 3-cocycle $\beta\in Z^3(\widehat A,k^*)$ representing the Lagrangian extension $(A,q,\iota)$. We do it by computing the associator of the pointed category $\C(A,q)_{R}$ of modules over an etale (i.e. indecomposable separable commutative) algebra $R$.

Simple objects $I(a)$ of $\C(A,\alpha,c)$ are labeled by elements of $A$.  We fix fusion isomorphisms $\iota_{a,b}:I(a)\otimes I(b)\to I(a+b)$ for $a,b\in A$.  By going around the diagram  
\[
\xymatrix{I(a)\otimes I(b)\otimes I(c)  \ar[r]^{\iota_{a,b}\otimes1} \ar[d]_{1\otimes\iota_{b,c}} & I(a+b)\otimes I(c) \ar[d]^{\iota_{a+b,c}}\\ I(a)\otimes I(b+c) \ar[r]_{\iota_{a,b+c}} & I(a+b+c)}
\]
clockwise, we obtain an automorphism of the simple object $I\left(a+b+c\right)\in\C(A,\alpha,c)$; namely, $\alpha(a,b,c)\cdot1_{I(a+b+c)}$.  The braiding in $\C(A,\alpha,c)$ is given by \eqref{braid}.
As in section \ref{pbfc}, we write the category $\C(A,\alpha,c)$ as $\C(A,q)$, where $q\in Q(A,k^*)$ is the (unique) quadratic function corresponding to the pair $(\alpha,c)$.  

An indecomposable separable algebra $R$ in $\C(A,q)$ is supported by a subgroup $B\subset A$.
Let $R(B)\in\C(A,q)$ be the object given by 
\[
R(B)=\bigoplus_{b\in B}{I(b)}\ .
\]
The multiplication map $\mu$ on $R(B)$ has a form $\mu(b,b')=\eta(b,b')\cdot\iota_{b,b'}$ for $\eta\in C^2(B,k^*)$ such that $\d\eta=\alpha|_B$.
The coboundary condition on $\eta$ makes this multiplication associative.  
The algebra $R(B)$ is commutative if and only if $\eta(b,b')=c(b,b')\eta(b',b)$ for all $b,b'\in B$.  
In terms of the quadratic function $q$, commutativity is equivalent to isotropy of the subgroup $B\subset A$. In this case the multiplication on $R(B)$ is defined uniquely up to an isomorphism.

The free module functor gives a a tensor equivalence $\C(A,q)_{R(B)}\to\V(A/B,\beta)$ for some $\beta\in H^3(A/B,k^*)$. Here we compute the associator $\beta$ explicitly.

Let $J:\C(A,q)\to\C(A,q)_{R(B)}$ be the free module functor $J(X)=X\otimes R(B)$.  The functor $J$ is tensor with the tensor structure $J_{X,Y}:J(X)\otimes_{R(B)}J(Y)\to J{\left(X\otimes Y\right)}$ given by the composition
\[
\xymatrix{
X\otimes R(B)\otimes Y\otimes R(B)\ar[rrr]^{1_X\ot c_{R(B),Y}\ot1_{R(B)}}&\null&\null&X\ot Y\ot R(B)\ot R(B)\ar[rr]^{1_X\ot1_Y\ot\mu}&\null&X\ot Y\ot R(B)\\
}\ .
\]
We also fix an isomorphism $J(I)\simeq R(B)$ given by the right unit isomorphism in $\C(A,q)$.

All simple $R(B)$-modules are induced from simple objects of $\C(A,q)$.   For $a\in A$, denote $J{\left(I(a)\right)}$ by $J(a)$, and define a map of $R(B)$-modules $\phi_{a,b}:J(a)\otimes_{R(B)}J(b)\rightarrow J(a+b)$ as the composition:
\[
\xymatrix{
J{\left(I(a)\right)}\ot_{R(B)}J{\left(I(b)\right)}\ar[rr]^{J_{I(a),I(b)}} & & J{\left(I(a)\ot I(b)\right)}\ar[rr]^{J{\left(\iota_{a,b}\right)}} && J{\left(I(a+b)\right)}\\
}
\]
Note that the clockwise composition of the diagram 
\begin{equation}\lb{afj}%\tag{$*$}
\xymatrix{
J(a)\ot_{R(B)}J(b)\ot_{R(B)}J(c)\ar[rr]^{\phi_{a,b}\otimes1}\ar[d]_{1\ot \phi_{b,c}} && J(a+b)\ot_{R(B)}J(c)\ar[d]^{\phi_{a+b,c}}\\
J(a)\ot_{R(B)}J(b+c)\ar[rr]_{\phi_{a,b+c}} && J(a+b+c)\\
}\end{equation}
is  $\alpha(a,b,c)\cdot1_{J(a+b+c)}$.

For $\ell\in B$, define $\theta_\ell:I(\ell)\otimes R(B)\rightarrow R(B)$ by $\theta_\ell=\oplus_{\ell'\in B}{\left(\eta(\ell,\ell')\cdot\iota_{\ell,\ell'}\right)}$.  This gives an isomorphism $\theta_\ell:J(\ell)\to J(0)$ of right $R(B)$-modules.  The collection of $\theta_\ell$ allows us to define, for any $a\in A$ and any $\ell\in B$ an isomorphism of $R(B)$-modules $\vartheta_{a,\ell}:J(a+\ell)\to J(a)$ 
\[
\xymatrix{
J(a+\ell)\ar[r]^{\vartheta_{a,\ell}}&J(a)\\
J(a)\ot_{R(B)}J(\ell)\ar[u]^{\phi_{a,\ell}}\ar[r]_{1\ot\theta_{\ell}}&J(a)\ot_{R(B)}R(B)\ar@{=}[u]\\
}
\]
The collection of $\vartheta$s establishes that up to isomorphism, simple $R(B)$-modules are labeled by elements of the quotient group $A/B$.

Similarly, define $\vartheta'_{\ell,a}:J(\ell+a)\rightarrow J(a)$ by the diagram
\[
\xymatrix{
J(\ell+a)\ar[r]^{\vartheta'_{\ell,a}}&J(a)\\
J(\ell)\ot_{R(B)}J(a)\ar[u]^{\phi_{\ell,a}}\ar[r]_{\theta_{\ell}\ot1}&R(B)\ot_{R(B)}J(a)\ar@{=}[u]\\
}
\]
Commutativity of addition implies that the maps $\vartheta_{a,\ell}$ and $\vartheta'_{\ell,a}$ differ by a nonzero scalar.  More precisely, $\vartheta'_{\ell,a}=c(a,\ell)\cdot\vartheta_{a,\ell}$.

Choose a set-theoretic section $s:A/B\rightarrow A$ of the canonical projection.  Define $\gamma:A/B\times A/B\to L$ by $\gamma(x,y)=s(x)+s(y)-s(x+y)$, and define $J_{x,y}:J(s(x))\ot_{R(B)}J(s(y))\to J(s(x+y))$ as the composition 
\[
\xymatrix{J(s(x))\ot_{R(B)}J(s(y))\ar[rr]^{\phi_{s(x),s(y)}}&\null&J(s(x)+s(y))\ar[rr]^{\vartheta_{s(x+y),\gamma(x,y)}}&\null&J(s(x+y))}\ .
\]
The tensor category $\C(A,q)_{R(B)}$ of right $R(B)$-modules is equivalent to $\mathcal{V}{\left(A/B,\beta\right)}$ for some $\beta\in Z^3(A/B,k^*)$.  The clockwise composition of the arrows of the diagram
\begin{equation}\label{beta1}%\tag{\dag}
\xymatrix{
J(s(x))\ot_{R(B)}J(s(y))\ot_{R(B)}J(s(z))\ar[r]^(.55){J_{x,y}\ot1}\ar[d]_{1\ot J_{y,z}}&J(s(x)+s(y))\ot_{R(B)}J(z)\ar[d]^{J_{x+y,z}}\\
J(s(x))\ot_{R(B)}J(s(y+z))\ar[r]_{J_{x,y+z}}&J(s(x+y+z))\\
}
\end{equation}
is  $\beta(x,y,z)\cdot1_{J(s(x+y+z))}$, and so we need to compute this composition to see the value of the associator $\beta(x,y,z)$.

We now expand the diagram \eqref{beta1}.  In doing so, we will suppress the tensor product symbols.  We will also suppress labels on identity morphisms.   Moreover, we will write $R$ instead of $R(B)$ where applicable.

\begin{equation}\label{diagram}%\tag{$**$}
\xymatrix{
J(s(x))J(s(y))J(s(z))\ar[rr]^{\phi_{s(x),s(y)}1}\ar[d]_{1\phi_{s(y),s(z)}}&\null&J(s(x)+s(y))J(s(z))\ar[rr]^{\vartheta_{s(x+y),\gamma(x,y)}}\ar[d]_{\phi_{s(x)+s(y),s(z)}}&\null&J(s(x+y))J(s(z))\ar[d]^{\phi_{s(x+y),s(z)}}\\
J(s(x))J(s(y)+s(z))\ar[rr]^{\phi_{s(x),s(y)+s(z)}}\ar[d]_{1\vartheta_{s(y+z),\gamma(y,z)}}&\null&J(s(x)+s(y)+s(z))\ar[rr]^{\vartheta_{s(x+y)+s(z),\gamma(x,y)}}\ar[d]_{\vartheta_{s(x)+s(y+z),\gamma(y,z)}}&\null&J(s(x+y)+s(z))\ar[d]^{\vartheta_{s(x+y+z),\gamma(x+y,z)}}\\
J(s(x))J(s(y+z))\ar[rr]_{\phi_{s(x),s(y+z)}}&\null&J(s(x)+s(y+z))\ar[rr]_{\vartheta_{s(x+y+z),\gamma(x,y+z)}}&\null&J(s(x+y+z))
}
\end{equation}

It is straightforward to see that the clockwise reading of the upper left cell of the diagram \eqref{diagram} contributes a factor of $\alpha(s(x),s(y),s(z))$.

\begin{lemma}
The lower left cell of the diagram \eqref{diagram} contributes a factor of $\alpha(s(x),s(y+z),\gamma(y,z))^{-1}$.
\end{lemma}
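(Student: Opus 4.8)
The plan is to identify the cell precisely and then reduce everything to the single coherence identity \eqref{afj}, the definitions of the maps $\vartheta$ and $\theta$, and the coherence of the unit constraint. To lighten notation I would set $a=s(x)$, $w=s(y+z)$ and $\ell=\gamma(y,z)\in B$, so that $s(y)+s(z)=w+\ell$. With this shorthand the lower-left cell becomes the square with top edge $\phi_{a,w+\ell}$, right edge $\vartheta_{a+w,\ell}$, left edge $1\ot\vartheta_{w,\ell}$ and bottom edge $\phi_{a,w}$. Reading it clockwise amounts to comparing the composite $\vartheta_{a+w,\ell}\circ\phi_{a,w+\ell}$ with $\phi_{a,w}\circ(1\ot\vartheta_{w,\ell})$, and the claim is that the former equals $\alpha(a,w,\ell)^{-1}=\alpha(s(x),s(y+z),\gamma(y,z))^{-1}$ times the latter.

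Next I would unfold the two $\vartheta$'s appearing on the two verticals. By its definition $\vartheta_{c,\ell}=\rho\circ(1\ot\theta_\ell)\circ\phi_{c,\ell}^{-1}$, where $\rho$ is the right unit isomorphism identifying $J(c)\ot_R R$ with $J(c)$ and $\phi_{c,\ell}$ is invertible. Substituting this expression for $\vartheta_{a+w,\ell}$ reduces the clockwise composite to $\rho\circ(1\ot\theta_\ell)\circ\phi_{a+w,\ell}^{-1}\circ\phi_{a,w+\ell}$, so the whole problem is to understand $\phi_{a+w,\ell}^{-1}\circ\phi_{a,w+\ell}$. This is exactly where \eqref{afj}, applied to the triple $(a,w,\ell)$, enters: it reads $\phi_{a+w,\ell}\circ(\phi_{a,w}\ot 1)=\alpha(a,w,\ell)\,\phi_{a,w+\ell}\circ(1\ot\phi_{w,\ell})$, and solving for $\phi_{a+w,\ell}^{-1}\circ\phi_{a,w+\ell}$ produces precisely the factor $\alpha(a,w,\ell)^{-1}$ together with the term $(\phi_{a,w}\ot 1)\circ(1\ot\phi_{w,\ell})^{-1}$.

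The remaining work is to show that, after this substitution, what is left matches the counterclockwise composite $\phi_{a,w}\circ(1\ot\vartheta_{w,\ell})$ on the nose. Here I would push $\phi_{a,w}$ to the front using the interchange law, rewriting $(1_{J(a+w)}\ot\theta_\ell)\circ(\phi_{a,w}\ot 1)$ as $(\phi_{a,w}\ot 1_R)\circ(1\ot\theta_\ell)$, followed by the naturality of the right unit $\rho$. What then remains is to compare $\rho_{J(a)\ot J(w)}$ with $1_{J(a)}\ot\rho_{J(w)}$ as morphisms out of $(J(a)\ot_R J(w))\ot_R R$, both precomposed with the same capping map built from $\theta_\ell$ and $\phi_{w,\ell}^{-1}$.

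I expect this last comparison to be the only genuine subtlety, and hence the main obstacle: a priori these two unit isomorphisms differ by the associator with the unit object $R$ in the third slot, which is an instance of the very associator $\beta$ we are in the middle of computing. The point is that this particular instance is harmless: $R=J(0)$ is the tensor unit, the coset of $\ell$ is trivial, and the triangle axiom together with the normalization of $\beta$ in its last argument forces that associator to act as the identity scalar. Once this is established the two verticals agree after capping with $\theta_\ell$, and the clockwise composite equals $\alpha(s(x),s(y+z),\gamma(y,z))^{-1}$ times the counterclockwise one, as claimed. Alternatively, one can bypass the coherence issue altogether by substituting the explicit scalar formulas for $\phi$, $\vartheta$ and $\theta$ and verifying the identity by direct computation, at the cost of a somewhat more opaque argument.
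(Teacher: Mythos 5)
Your proposal is correct and takes essentially the same route as the paper: the paper's proof expands the cell into a single larger diagram (with $a=s(x)$, $b=s(y+z)$, $\ell=\gamma(y,z)$) in which every cell commutes except the top one, which is precisely \eqref{afj} applied to the triple $(a,b,\ell)$ and contributes $\alpha(s(x),s(y+z),\gamma(y,z))^{-1}$, exactly matching your unfolding of the $\vartheta$'s, your application of \eqref{afj}, and your interchange/unit-naturality steps. The unit-coherence subtlety you isolate (comparing $\rho_{J(a)\ot J(w)}$ with $1\ot\rho_{J(w)}$) is the one point the paper passes over silently---it is absorbed into the ``$=$'' identifications in its bottom and left cells---and your resolution of it is sound, though it is cleaner to derive the triviality of that associator instance from the normalization of $\alpha$ (via \eqref{afj} with third argument $0$) rather than from the normalization of $\beta$, which is the quantity being computed.
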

\begin{proof}
Let $a = s(x), b=s(y+z) , \ell= \gamma(y,z)$.
Consider the diagram
\[
\xymatrix{
J(a)J(b+\ell)\ar[ddd]_{1\vartheta_{b,\ell}}\ar[rrr]^{\phi_{a,b+\ell}}&&&J(a+b+\ell)\ar[ddd]^{\vartheta_{a+b,\ell}}\\
&J(a)J(b)J(\ell)\ar[r]^{\phi_{a,b}1}\ar[ul]_{1\phi_{b,\ell}}\ar[d]_{11\theta_\ell}&J(a+b)J(\ell)\ar[ur]_{\phi_{a+b,\ell}}\ar[d]^{1\theta_\ell}&\\
& J(a)J(b)R\ar[r]_{\phi_{a,b}1}\ar@{=}[dl]&J(a+b)R\ar@{=}[dr]&\\
J(a)J(b)\ar[rrr]_{\phi_{a,b}}&&&J(a+b)\\
}
\]
All cells except the top one commute. By \eqref{afj}, the clockwise composition of the top cell gives $\alpha(a,b,\ell)^{-1}$.  
\end{proof}

\begin{lemma}
The upper right cell of the diagram \eqref{diagram} contributes a factor of 
\[
\frac{\alpha(\gamma(x,y),s(x+y),s(z))c(s(x+y)+s(z),\gamma(x,y))}{c(s(x+y),\gamma(x,y))}.
\]
\end{lemma}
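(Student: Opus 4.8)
The plan is to expand the upper-right cell of \eqref{diagram} by inserting an explicit copy of $J(\gamma(x,y))$, exactly as in the proof of the previous lemma, so that all the noncommutativity is concentrated in a single fusion square governed by \eqref{afj}. Write $\ell=\gamma(x,y)$, $p=s(x+y)$ and $r=s(z)$, so that $p+\ell=s(x)+s(y)$ and the cell reads
\[
\xymatrix{
J(p+\ell)J(r)\ar[r]^{\vartheta_{p,\ell}\otimes 1}\ar[d]_{\phi_{p+\ell,r}} & J(p)J(r)\ar[d]^{\phi_{p,r}}\\
J(p+\ell+r)\ar[r]_{\vartheta_{p+r,\ell}} & J(p+r)\ .}
\]

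The first step is to notice the asymmetry in how the two $\vartheta$-maps absorb the factor $\ell\in B$: the top map $\vartheta_{p,\ell}\otimes 1$ absorbs $\ell$ immediately to the right of $p$ (hence to the left of $r$), while the bottom map $\vartheta_{p+r,\ell}$ absorbs it to the right of $p+r$. To put them on equal footing I would rewrite both using the left-absorbing maps $\vartheta'$, via the identity $\vartheta'_{\ell,a}=c(a,\ell)\cdot\vartheta_{a,\ell}$ established above. Replacing $\vartheta_{p,\ell}$ by $c(p,\ell)^{-1}\vartheta'_{\ell,p}$ in the top edge and $\vartheta_{p+r,\ell}$ by $c(p+r,\ell)^{-1}\vartheta'_{\ell,p+r}$ in the bottom edge produces exactly the scalar ratio $c(p+r,\ell)\,c(p,\ell)^{-1}$; this accounts for the two braiding factors in the claimed expression.

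It then remains to evaluate the purely associative square built from the primed maps, together with the unchanged edges $\phi_{\ell+p,r}=\phi_{p+\ell,r}$ and $\phi_{p,r}$. Here I would insert the object $J(\ell)\otimes_R J(p)\otimes_R J(r)$ and build an auxiliary diagram as in the previous lemma: the two triangles expressing $\vartheta'_{\ell,p}\circ\phi_{\ell,p}=\theta_\ell\otimes 1$ and $\vartheta'_{\ell,p+r}\circ\phi_{\ell,p+r}=\theta_\ell\otimes 1$ commute by the definition of $\vartheta'$; the bottom rectangle commutes because $\theta_\ell$ (acting on the $J(\ell)$ factor) and $\phi_{p,r}$ (acting on the remaining factors) operate on disjoint tensorands; and the only genuinely noncommutative cell is the fusion square
\[
\phi_{\ell+p,r}\circ(\phi_{\ell,p}\otimes 1)\quad\text{versus}\quad\phi_{\ell,p+r}\circ(1\otimes\phi_{p,r}),
\]
which by \eqref{afj} contributes $\alpha(\ell,p,r)=\alpha(\gamma(x,y),s(x+y),s(z))$. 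Precomposing the two edges of the converted outer square with the isomorphism $\phi_{\ell,p}\otimes 1$ and pushing through the commuting cells yields the stated factor $\alpha(\gamma(x,y),s(x+y),s(z))\,c(s(x+y)+s(z),\gamma(x,y))\,c(s(x+y),\gamma(x,y))^{-1}$.

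The step I expect to be the main obstacle is the careful bookkeeping of the orderings of $\ell$ relative to $p$ and $r$, together with the braidings they entail. Because the tensor structure $J_{X,Y}$ of the free-module functor already contains the braiding $c_{R(B),Y}$, and because the relation $\vartheta'=c\vartheta$ hides a further braiding, one must be scrupulous about which copy of $c$ is invoked at each stage, and must verify that the rectangles claimed to commute really do commute in $\C(A,q)_{R(B)}$ rather than merely up to an unwanted scalar. Once the conversion from $\vartheta$ to $\vartheta'$ is performed correctly, the remaining associative part is a routine application of \eqref{afj}, exactly as in the lemma just proved.
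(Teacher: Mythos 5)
Your proposal is correct and takes essentially the same route as the paper's own proof: there, too, the $\vartheta$'s are traded for the left-absorbing $\vartheta'$'s via $\vartheta'_{\ell,a}=c(a,\ell)\cdot\vartheta_{a,\ell}$ (yielding the factors $c(s(x+y)+s(z),\gamma(x,y))$ and $c(s(x+y),\gamma(x,y))^{-1}$), and the remaining primed square is evaluated by inserting $J(\gamma(x,y))\ot_{R}J(s(x+y))\ot_{R}J(s(z))$, with the two $\vartheta'$-definition triangles and the $\theta_\ell$-versus-$\phi$ rectangles commuting on the nose and the fusion square \eqref{afj} contributing $\alpha(\gamma(x,y),s(x+y),s(z))$. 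The only difference is the order of bookkeeping---you convert to $\vartheta'$ first and then expand, whereas the paper expands the cell with $\vartheta'$ edges and performs the comparison with $\vartheta$ at the end---which is immaterial.
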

\begin{proof}
Let  $\ell=\gamma(x,y)$, $a=s(x+y)$, and $b=s(z)$, and consider the diagram
\[
\xymatrix{
J(\ell+a)J(b)\ar[rrr]^{\vartheta'_{\ell,a}1}\ar[ddd]_{\phi_{\ell+a,b}}&&&J(a)J(b)\ar[ddd]^{\phi_{a,b}}\ar@{=}[dl]\\
\null&J(\ell)J(a)J(b)\ar[d]_{1\phi_{a,b}}\ar[r]^{\theta_\ell11}\ar[ul]^{\phi_{\ell,a}1}&RJ(a)J(b)\ar[d]^{1\phi_{a,b}}&\null\\
\null&J(\ell)J(a+b)\ar[r]_{\theta_{\ell}1}\ar[dl]_{\phi_{\ell,a+b}}&RJ(a+b)\ar@{=}[dr]&\null\\
J(\ell+a+b)\ar[rrr]_{\vartheta'_{\ell,a+b}}&&&J(a+b)\\
}
\]
The rightmost and the centre cells commute on the nose.  The leftmost cell contributes a factor of $\alpha(\ell,a,b)$.  The bottom cell is the definition of $\vartheta'_{\ell,a+b}$.  Similarly, the top cell is the definition of $\vartheta'_{\ell,a}$.  Now, in the diagram \eqref{diagram}, $\vartheta'$s do not appear.  Instead, $\vartheta$s appear.  Comparing $\vartheta'$ with $\vartheta$ gives factors of $c(a+b,\ell)$ from the bottom cell and $c(a,\ell)^{-1}$ from the top cell.  The inverse appears in the second factor due to the orientation being reversed relative to the definition of $\vartheta_{a,\ell}$.  
\end{proof}

\begin{lemma}
The lower right cell of the diagram \eqref{diagram} contributes a factor of 
\[
\frac{\eta(\gamma(x,y+z),\gamma(y,z))}{\eta(\gamma(x+y,z),\gamma(x,y))}.
\]
\end{lemma}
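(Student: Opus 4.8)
The four arrows bounding the lower right cell are all of the form $\vartheta$, so this cell compares the two composites
\[
\vartheta_{s(x+y+z),\gamma(x+y,z)}\circ\vartheta_{s(x+y)+s(z),\gamma(x,y)}\quad\text{and}\quad \vartheta_{s(x+y+z),\gamma(x,y+z)}\circ\vartheta_{s(x)+s(y+z),\gamma(y,z)}\ ,
\]
both of which are isomorphisms $J(s(x)+s(y)+s(z))\to J(s(x+y+z))$ between simple $R(B)$-modules. The plan is to reduce each composite to a scalar multiple of a single map $\vartheta_{s(x+y+z),L}$, where $L=\gamma(x+y,z)+\gamma(x,y)=\gamma(x,y+z)+\gamma(y,z)\in B$ (the two expressions for $L$ agree by the $2$-cocycle identity for $\gamma$), and then to read off the ratio of the two scalars.

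The key step is a composition law for the maps $\vartheta$: for $a\in A$ and $\ell,\ell'\in B$,
\[
\vartheta_{a,\ell}\circ\vartheta_{a+\ell,\ell'}=\eta(\ell,\ell')\,\vartheta_{a,\ell+\ell'}\ .
\]
I would prove this by unfolding the definitions of the $\vartheta$'s in terms of the $\phi$'s and the $\theta_\ell$'s into an expanded diagram, in the same spirit as the diagrams used in the previous three lemmas: the subcells assembled from the $\phi$'s contribute powers of $\alpha$ via \eqref{afj}, the subcells assembled from the $\theta_\ell$'s contribute factors of $\eta$ through $\mu(b,b')=\eta(b,b')\iota_{b,b'}$, and the relation $\d\eta=\alpha|_B$ forces the $\alpha$-contributions to cancel, leaving only $\eta(\ell,\ell')$. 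Equivalently, and more efficiently, since $\vartheta_{a,\ell}$ is a morphism of right $R(B)$-modules it is determined by its value on the free generator: evaluating $\vartheta_{a+\ell,\ell'}$ on the generator of $J(a+\ell+\ell')$ lands in the $I(a+\ell)\otimes I(\ell')$ summand of $J(a+\ell)$, which is the generator acted on by the degree-$\ell'$ part of $R(B)$, so that $R(B)$-linearity together with $\mu(\ell,\ell')=\eta(\ell,\ell')\iota_{\ell,\ell'}$ produces precisely the scalar $\eta(\ell,\ell')$ and the map $\vartheta_{a,\ell+\ell'}$. The conceptual point is that associativity of the $R(B)$-action, i.e. the very condition $\d\eta=\alpha|_B$, is what makes the law free of $\alpha$.

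Applying the composition law to each composite (matching indices via $s(x+y+z)+\gamma(x+y,z)=s(x+y)+s(z)$ and $s(x+y+z)+\gamma(x,y+z)=s(x)+s(y+z)$) reduces the clockwise path to $\eta(\gamma(x+y,z),\gamma(x,y))\,\vartheta_{s(x+y+z),L}$ and the counterclockwise path to $\eta(\gamma(x,y+z),\gamma(y,z))\,\vartheta_{s(x+y+z),L}$, and comparing the two yields the stated factor $\frac{\eta(\gamma(x,y+z),\gamma(y,z))}{\eta(\gamma(x+y,z),\gamma(x,y))}$. The main obstacle is the composition law itself, and within it the verification that the associativity constraints of $\C(A,q)$ make no net contribution; this is exactly where the hypothesis $\d\eta=\alpha|_B$ is used, and it is what distinguishes this purely $\eta$-bearing cell from the $\alpha$- and $c$-bearing cells computed in the previous lemmas.
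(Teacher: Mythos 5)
Your proof follows essentially the same route as the paper's: the paper splits the cell into two triangles along the diagonal $\vartheta_{s(x+y+z),\,\gamma(x,y)+\gamma(x+y,z)}$ (your $\vartheta_{s(x+y+z),L}$) and establishes precisely your composition law $\vartheta_{a,\ell}\circ\vartheta_{a+\ell,\ell'}=\eta(\ell,\ell')\,\vartheta_{a,\ell+\ell'}$ by expanding it into a diagram of $\phi$'s and $\theta$'s, asserting that all cells commute except the one carrying the multiplication $\mu$ (hence the factor $\eta(\ell,\ell')$), and then reads off the two $\eta$-factors and takes their ratio exactly as you do. The only divergence is expository rather than structural: where you attribute the disappearance of associator terms to the relation $\d\eta=\alpha|_{B}$ (which is indeed what makes $\theta_\ell$, and hence the $\vartheta$'s, morphisms of $R(B)$-modules), the paper simply asserts the commutativity of all the remaining cells of its expanded diagram.
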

\begin{proof}
We split the lower right cell of the diagram \eqref{diagram} into two triangles as follows:
\begin{equation}\label{diag2}%\tag{\ddag}
\xymatrix{
J(s(x)+s(y)+s(z))\ar[rrr]^{\vartheta_{s(x+y)+s(z),\gamma(x,y)}}\ar[ddrrr]_{\vartheta_{s(x+y+z),\gamma(x,y,z)}}\ar[dd]_{\vartheta_{s(x)+s(y+z),\gamma(y,z)}}&\null&\null&J(s(x+y)+s(z))\ar[dd]^{\vartheta_{s(x+y+z),\gamma(x+y,z)}}\\
\null&\null&\null&\null\\
J(s(x)+s(y+z))\ar[rrr]_{\vartheta_{s(x+y+z),\gamma(x,y+z)}}&\null&\null&J(s(x+y+z))\\
}
\end{equation}
Let us now consider the expanded general form of such triangles:

\[
\xymatrix{
J{\left(a+\ell+\ell'\right)}\ar[rrrr]^{\vartheta_{a,\ell+\ell'}}\ar[ddddd]_{\vartheta_{a+\ell,\ell'}}&&&&J(a)\\
\null&\null&J(a)J{\left(\ell+\ell'\right)}\ar[ull]^{\phi_{a,\ell+\ell'}}\ar[r]^{1\theta_{\ell+\ell'}}&J(a)R\ar@{=}[ur]&\null\\
\null&J(a+\ell)J{\left(\ell'\right)}\ar[uul]^{\phi_{a+\ell,\ell'}}\ar[d]^{1\theta_{\ell'}}&J(a)J(\ell)J{\left(\ell'\right)}\ar[u]^{1\phi_{\ell,\ell'}}\ar[l]^{\phi_{a,\ell}1}\ar[d]^{11\theta_{\ell'}}&\null&\null\\
\null&J(a+\ell)\ar@{=}[ddl]&J(a)J(\ell)R\ar[l]^{\phi_{a,\ell}1}\ar@{=}[d]&J(a)R\ar@{=}[uuur]&\null\\
\null&\null&J(a)J(\ell)\ar[dll]^{\phi_{a,\ell}}\ar[ur]_{1\theta_\ell}&\null&\null\\
J(a+\ell)\ar[rrrr]_{1_{J(a+\ell)}}&&&&J(a+\ell)\ar[uuuuu]_{\vartheta_{a,\ell}}\\
}
\]
All of the cells in this diagram commute except for the right-centre cell, which contributes a factor of $\eta{\left(\ell,\ell'\right)}$.  Therefore, the lower triangle of \eqref{diag2} contributes a factor of $\eta(\gamma(x,y+z),\gamma(y,z))$, and the upper triangle contributes a factor of $\eta(\gamma(x+y,z),\gamma(x,y))^{-1}$.
\end{proof}

Thus we have the following:

\begin{theorem}
The category $\C(A,q)_{R(B)}$ of right modules over an etale algebra $R(B)\in\C(A,q)$ is tensor equivalent to the category $\mathcal{V}{\left(A/B,\beta\right)}$ of $A/B$-graded vector spaces, where the associator $\beta\in Z^3(A/B,k^*)$ is given by the formula
\begin{equation}\label{betadef}
\beta(x,y,z)=
\end{equation}
\[
=\frac{\alpha{\left(s(x),s(y),s(z)\right)}\alpha{\left(\gamma(x,y),s(x+y),s(z)\right)}c{\left(s(x+y)+s(z),\gamma(x,y)\right)}\eta{\left(\gamma(x,y+z),\gamma(y,z)\right)}}{\alpha{\left(s(x),s(y+z),\gamma(y,z)\right)}c{\left(s(x+y),\gamma(x,y)\right)}\eta{\left(\gamma(x+y,z),\gamma(x,y)\right)}}
\]
\end{theorem}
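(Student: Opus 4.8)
The plan is to read $\beta(x,y,z)$ directly off the expanded diagram \eqref{diagram}, which is by construction the unpacking of the defining square \eqref{beta1} for the fusion maps $J_{x,y}$. First I would recall that, by the definition of the associator on $\V(A/B,\beta)$, the scalar $\beta(x,y,z)$ is precisely the obstruction to commutativity of \eqref{beta1}: since $J(s(x+y+z))$ is a simple object, the two composites $J_{x+y,z}\circ(J_{x,y}\otimes 1)$ and $J_{x,y+z}\circ(1\otimes J_{y,z})$ differ by a nonzero scalar, and that scalar is $\beta(x,y,z)\cdot 1_{J(s(x+y+z))}$. Substituting the definition of each $J_{x,y}$ as $\vartheta_{s(x+y),\gamma(x,y)}\circ\phi_{s(x),s(y)}$ turns the outer square into the nine-node grid \eqref{diagram}, whose outer boundary reproduces exactly the two composites of \eqref{beta1}.

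Next I would observe that the scalar defect of the outer boundary of \eqref{diagram} factors as the product of the scalar defects of its four interior cells. This is the usual pasting principle: the interior edges are each shared by two adjacent cells with opposite orientations, so they cancel when the cell defects are multiplied, leaving precisely the clockwise-versus-counterclockwise comparison around the boundary. Hence $\beta(x,y,z)$ equals the product of the four cell contributions, read clockwise.

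The four contributions are then supplied by the computations already in hand. The upper-left cell is an instance of \eqref{afj} applied to $s(x),s(y),s(z)$, contributing $\alpha(s(x),s(y),s(z))$. The remaining three cells are the content of the three preceding lemmas, contributing $\alpha(s(x),s(y+z),\gamma(y,z))^{-1}$, then $\alpha(\gamma(x,y),s(x+y),s(z))c(s(x+y)+s(z),\gamma(x,y))c(s(x+y),\gamma(x,y))^{-1}$, and finally $\eta(\gamma(x,y+z),\gamma(y,z))\eta(\gamma(x+y,z),\gamma(x,y))^{-1}$ respectively. Multiplying these four factors together yields the stated formula \eqref{betadef}, completing the proof.

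The one point requiring care---and the main obstacle---is the bookkeeping of orientations, so that the product of the cell defects matches the clockwise reading of the boundary rather than its inverse. Each lemma has been phrased as a contribution to the clockwise reading of its cell, and the shared interior edges of \eqref{diagram} are oriented consistently between neighbouring cells; I would verify once that these conventions agree, after which the telescoping is automatic and no further calculation is needed.
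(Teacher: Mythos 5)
Your proposal is correct and follows essentially the same route as the paper: the paper likewise defines $\beta(x,y,z)$ as the scalar defect of the square \eqref{beta1}, expands it into the nine-node grid \eqref{diagram}, and obtains the formula by multiplying the four cell contributions supplied by \eqref{afj} and the three preceding lemmas. Your explicit attention to the pasting/orientation bookkeeping is the only addition; the paper leaves that step implicit.
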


Recall that Lagrangian algebras in a non-degenerate braided pointed category $\C(A,q)$ correspond to Lagrangian subgroups of $A$.  For a Lagrangian $L\subset A$, the projection $\pi:A\to \widehat B$ (from section \ref{cle}) gives an isomorphism $A/B\simeq \widehat B$, which in turn gives an isomorphism $H^3(A/B,k^*)\simeq H^3(\widehat L,k^*)$. 

\begin{corollary}
The homomorphism $Lex(L)\to H^3(\widehat L,k^*)$ sends a Lagrangian extension $L\subset A$ into the class of $\beta$ given by \eqref{betadef} transported along the isomorphism $H^3(A/B,k^*)\simeq H^3(\widehat L,k^*)$. 
\end{corollary}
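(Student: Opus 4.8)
The plan is to read the homomorphism $Lex(L)\to H^3(\widehat L,k^*)$ straight off the preceding theorem by specialising it to $B=L$, and then to match the resulting cocycle with the inversion recipe of Proposition~\ref{p0m}. The crucial observation is that, since $L\subset A$ is Lagrangian with respect to $q$, the etale algebra $R(L)$ is a \emph{Lagrangian} algebra in $\C(A,q)$ (the final assertion of Section~\ref{pbfc}), so the canonical braided functor $\C(A,q)\to\Z\!\left(\C(A,q)_{R(L)}\right)$ is an equivalence. Feeding the preceding theorem into this equivalence already produces a braided equivalence $\C(A,q)\simeq\Z(A/L,\beta)$ with $\beta$ given by \eqref{betadef}; what remains is bookkeeping about which identifications turn this into the stated homomorphism.

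In detail, I would first invoke the theorem above to identify $\C(A,q)_{R(L)}$ with $\V(A/L,\beta)$ for $\beta\in Z^3(A/L,k^*)$ the explicit cocycle \eqref{betadef} (with $B=L$), and then compose with the centre equivalence to obtain $\C(A,q)\simeq\Z(\V(A/L,\beta))\simeq\Z(A/L,\beta)$. The substantive point is that this is exactly the construction used in the proof of Proposition~\ref{p0m} to invert the isomorphism $H^3(G,k^*)\xrightarrow{\sim}\mathrm{Mex}(\Rep(G))$: there one selects the function algebra $k(G)$ inside the symmetric subcategory $\Rep(G)$, forms its category of modules, and recognises it as $\V(G,\gamma)$. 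With $G=\widehat L$ and symmetric subcategory $\C(L)\simeq\Rep(\widehat L)$, the function algebra $k(\widehat L)\in\Rep(\widehat L)$ decomposes as $\bigoplus_{\ell\in L}I(\ell)$, i.e.\ it \emph{is} the algebra $R(L)$. Hence the class assigned by (the inverse of) Proposition~\ref{p0m} to the pointed modular extension $\C(A,q)$ is precisely $[\beta]$; and since $Lex(L)\to H^3(\widehat L,k^*)$ is by definition the composite of $Lex(L)\simeq\mathrm{Mex}_{pt}(\Rep(\widehat L))\hookrightarrow\mathrm{Mex}(\Rep(\widehat L))$ with that inverse isomorphism, the image of $(A,q,\iota)$ is $[\beta]$.

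It then remains to identify $A/L$ with $\widehat L$ in the correct way. Simple $R(L)$-modules are labelled by cosets in $A/L$, while the grading coming from Proposition~\ref{p0m} is by $\Aut_{alg}(R(L))=\widehat L$; I would check that the resulting bijection $A/L\simeq\widehat L$ is the descent of the polarisation map $\pi$ of Section~\ref{cle}, $\pi(a)(x)=q(a+\iota(x))-q(a)=\sigma(a,\iota(x))$, whose kernel is $L^{\perp}=L$, so that $\pi$ does induce an isomorphism $A/L\xrightarrow{\sim}\widehat L$. This $\pi$ is precisely the identification ``induced by the quadratic function'' used to set up \eqref{main}, so transporting $[\beta]$ along it yields the asserted class in $H^3(\widehat L,k^*)$. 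The step I expect to demand the most care is this matching: confirming on the one hand that $k(\widehat L)$ really is $R(L)$ under $\C(L)\simeq\Rep(\widehat L)$, and on the other that the grading identification produced by the centre construction agrees on the nose with $\pi$ rather than with its inverse or a twist; getting either wrong would transport $\beta$ by the wrong isomorphism and spoil the functoriality of \eqref{main}.
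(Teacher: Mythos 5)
Your proposal is correct and takes essentially the same route the paper intends: the paper offers no explicit proof of this corollary, treating it as immediate from the theorem (specialised to $B=L$), the fact that $R(L)$ is Lagrangian so that $\C(A,q)\simeq\Z{\left(\C(A,q)_{R(L)}\right)}$, and the identification $A/L\simeq\widehat L$ via the polarisation map $\pi$ stated in the preceding paragraph. Your additional bookkeeping --- checking that $k(\widehat L)\in\Rep(\widehat L)$ corresponds to $R(L)$ under $\C(L)\simeq\Rep(\widehat L)$, and that the grading by $g$-local modules from the proof of Proposition \ref{p0m} agrees with $\pi$ (via the double braiding $c(a,\ell)c(\ell,a)=\sigma(a,\iota(\ell))$) --- makes explicit exactly the identifications the paper leaves implicit.
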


%%%%%%%%%%%%%%%%%%%%%

\section{Polynomial Functors}\lb{pola}

\setcounter{subsection}{1}

Let $A,B\in\mathcal{A}b$, and let $P:\mathcal{A}b\to\mathcal{A}b$ be a functor.  The diagram of canonical projections and injections
\[
\xymatrix{
A\ar@(dl,ul)[]^{1}\ar@/^15pt/[r]&A\oplus B\ar@/^15pt/[r]\ar@/^15pt/[l]& B   \ar@(dr,ur)[]_{1}  \ar@/^15pt/[l] }
\vspace{5pt}
\] 
gives rise to a (right) splitting of its image under $P$:
\[
\xymatrix{P(A\oplus B)\ar@/^15pt/[r]&P(A)\oplus P(B)\ar@/^15pt/[l]\\}
\]
Define $\delta P(A,B)$ to be the complement of this splitting, so that
\[
P(A\op B)\simeq P(A)\op P(B)\op\delta P(A,B)\ .
\]
This defines a functor $\delta P:\mathcal{A}b\times\mathcal{A}b\to\mathcal{A}b$ which we call the {\em polarisation of $P$}.\\

We call a constant functor {\em polynomial of degree zero}.  For $n>0$, a functor $P$ is said to be {\em polynomial of degree $n$} if its polarisation $\delta P$ is polynomial of degree $n-1$ in each of its arguments.

\begin{remark}
An additive functor $P:\mathcal{A}b\to\mathcal{A}b$ is polynomial of degree 1. 
Conversely, a polynomial functor $P$ of degree 1 such that $P(0)=0$ is additive.
\end{remark}

\bpr\lb{b.one}
\begin{enumerate}[1.]
\item The polarization is a functor
$$\delta(\cdot):\mathcal{F}un(\mathcal{A}b,\mathcal{A}b)\to\mathcal{F}un(\mathcal{A}b\times\mathcal{A}b,\mathcal{A}b)\ .$$
More precisely, given functors $P,Q:\mathcal{A}b\to\mathcal{A}b$ and a natural transformation $f:P\to Q$, there is a natural transformation $\delta f:\delta P\to\delta Q$.
\item If $F_1,F_2,F_3:\mathcal{A}b\to\mathcal{A}b$ are functors such that the sequence
\[
0\to F_1\to F_2\to F_3\to 0
\] is exact, then the corresponding sequence
\[
0\to \delta F_1\to \delta F_2\to \delta F_3\to 0
\]
is again exact.
\end{enumerate}
\epr
\bpf
\begin{enumerate}[1.]
\item Let $P,Q:\mathcal{A}b\to\mathcal{A}b$ be functors, and let $f:P\to Q$ be a natural transformation.
%%%%%%%%%%%%%%%%%%%%%%%%%%%%%%%%%%%%%%%%%%%%%%%%
%Void begins HERE.
\void{
  For $A_1,A_2,B_1,B_2\in\mathcal{A}b$ and a morphism $\psi:A_1\oplus A_2\to B_1\oplus B_2$, there are maps $f_{A_1\oplus A_2}$ and $f_{B_1\oplus B_2}$ that make the diagram
\beq\lb{pd1}
\xymatrix{
P(A_1\oplus A_2)\ar[dd]_{f_{A_1\oplus A_2}}\ar[rr]^{P(\psi)}&&P(B_1\oplus B_2)\ar[dd]^{f_{B_1\oplus B_2}}\\
\null&&\null\\
Q(A_1\oplus A_2)\ar[rr]_{Q(\psi)}&&Q(B_1\oplus B_2)\\
}
\eeq
commute.  Write $P(A_1\oplus A_2)\simeq P(A_1)\oplus P(A_2)\oplus \delta P(A_1,A_2)$, $P(B_1\oplus B_2)\simeq P(B_1)\oplus P(B_2)\oplus \delta P(B_1,B_2)$, $Q(A_1\oplus A_2)\simeq Q(A_1)\oplus Q(A_2)\oplus \delta Q(A_1,A_2)$, and $Q(B_1\oplus B_2)\simeq Q(B_1)\oplus Q(B_2)\oplus \delta Q(B_1,B_2)$.  Let $p_1:P(A_1\oplus A_2)\twoheadrightarrow \delta P(A_1,A_2)$, $p_2:P(B_1\oplus B_2)\twoheadrightarrow \delta P(B_1,B_2)$, $q_1:Q(A_1\oplus A_2)\twoheadrightarrow Q(A_1,A_2)$, and $q_2:Q(B_1\oplus B_2)\twoheadrightarrow \delta Q(B_1,B_2)$ be the projections corresponding to the above direct sum decompositions.  The diagram \eqref{pd1} becomes
\beq\lb{pd2}
\xymatrix{
P(A_1)\oplus P(A_2)\oplus\delta P(A_1,A_2)\ar[ddd]_{f_{A_1\oplus A_2}}\ar[rrr]^{P(\psi)}&&&P(B_1)\oplus P(B_2)\oplus\delta P(B_1,B_2)\ar[ddd]^{f_{B_1\oplus B_2}}\\
\null&&&\null\\
\null&&&\null\\
Q(A_1)\oplus Q(A_2)\oplus\delta Q(A_1,A_2)\ar[rrr]_{Q(\psi)}&&&Q(B_1)\oplus Q(B_2)\oplus\delta Q(B_1,B_2)\\
}
\eeq
Define $\delta f_{A_1,A_2}=q_1\circ{\left.f_{A_1\oplus A_2}\right|}_{\delta P(A_1,A_2)}$ and $\delta f_{B_1,B_2}=q_2\circ{\left.f_{B_1\oplus B_2}\right|}_{\delta P(B_1,B_2)}$.
}%Void ends HERE.
%%%%%%%%%%%%%%%%%%%%%%%%%%%%%%%%%%%%%%%%%%%%%%%%
Let $A,B\in\mathcal{A}b$ be given.  Then there is a morphism $f_{A\oplus B}:P(A\oplus B)\to Q(A\oplus B)$.  Note that $P(A\oplus B)\simeq P(A)\oplus P(B)\oplus \delta P(A,B)$ and $Q(A\oplus B)\simeq Q(A)\oplus Q(B)\oplus \delta Q(A,B)$.  The map $f_{A\oplus B}$ then has the form $f_{A\oplus B}=f_A\oplus f_B\oplus {(\delta f)}_{A,B}$.  Explicitly, ${(\delta f)}_{A,B}=\left.f_{A\oplus B}\right|_{\delta P(A,B)}$.  Naturality of $\delta f$ then follows from naturality of $f$.
\item\lb{b.12} Let $A,B\in\mathcal{A}b$. %Void begins HERE.
%%%%%%%%%%%%%%%%%%%%%%%%%%%%%%%%%%%%%%%%%%%%%%%%
\void{ By hypothesis, there are morphisms $\iota:F_1(A\oplus B)\to F_2(A\oplus B)$ and $\pi:F_2(A\oplus B)\to F_3(A\oplus B)$ making the sequence
\[
0\to F_1(A\oplus B)\overset{\iota}{\longrightarrow} F_2(A\oplus B)\overset{\pi}{\longrightarrow}F_3(A\oplus B)\to 0
\]
exact.  For each $i\in\{1,2,3\}$, let $f_i$ denote the natural isomorphism
$$F_i(A\oplus B)\overset{\sim}{\longrightarrow}F_i(A)\oplus F_i(B)\oplus \delta F_i(A,B)\ ,$$
let $p_i:F_i(A\oplus B)\twoheadrightarrow \delta F_i(A,B)$ be the corresponding projection map, and let $\gamma_i=\left.{f_i}^{-1}\right|_{\delta F_i(A,B)}$.  Define morphisms $\iota^\prime:\delta F_1(A,B)\to \delta F_2(A,B)$ and $\pi^\prime:\delta F_2(A,B)\to \delta F_3(A,B)$ by $\iota^\prime=p_2\circ\iota\circ\gamma_1$, $\pi^\prime=p_3\circ\pi\circ\gamma_2$.  We claim that the sequence
\[
0\to \delta F_1(A,B)\overset{\iota^\prime}{\longrightarrow} \delta F_2(A,B)\overset{\pi^\prime}{\longrightarrow} \delta F_3(A,B)\to 0
\]
is exact.  %Void ends HERE.
%%%%%%%%%%%%%%%%%%%%%%%%%%%%%%%%%%%%%%%%%%%%%%%%
}
For each $i=1,2,3$, there are splittings
\[
\xymatrix{
{F_i}(A\oplus B)\ar@/^15pt/[rr]&&{F_{i}}(A)\oplus{F_i}(B)\ar@/^15pt/[ll]\\
}\ \text{and}\ 
\xymatrix{
{F_i}(A\oplus B)\ar@/^15pt/[rr]&&{\delta F_i}(A,B)\ar@/^15pt/[ll]\\
}
\]
These splittings give rise to the diagram
\beq\lb{jkl;}
\xymatrix{
\null&0\ar[d]&0\ar[d]&0\ar[d]&\null\\
0\ar[r]&{\delta F_1}(A,B)\ar[r]\ar@/^6pt/[d]&{\delta F_2}(A,B)\ar[r]\ar@/^6pt/[d]&{\delta F_3}(A,B)\ar[r]\ar@/^6pt/[d]&0\\
0\ar[r]&{F_1}(A\oplus B)\ar[r]\ar@/^6pt/[u]\ar@/^6pt/[d]&{F_2}(A\oplus B)\ar[r]\ar@/^6pt/[u]\ar@/^6pt/[d]&{F_3}(A\oplus B)\ar[r]\ar@/^6pt/[u]\ar@/^6pt/[d]&0\\
0\ar[r]&{F_1}(A)\oplus{F_1}(B)\ar[r]\ar@/^6pt/[u]\ar[d]&{F_2}(A)\oplus{F_2}(B)\ar[r]\ar[d]\ar@/^6pt/[u]&{F_3}(A)\oplus{F_3}(B)\ar[r]\ar[d]\ar@/^6pt/[u]&0\\
\null&0&0&0&\null\\
}
\eeq
By hypothesis, the last two rows of the diagram \eqref{jkl;} are exact.  The columns of \eqref{jkl;} are split exact.  The snake lemma implies that the first rowis exact. 
\end{enumerate}
\epf

\begin{example}\lb{additive}
For an abelian group $B$, the functor $H(B)=H^1(\bZ/2\bZ,B^{\ot2})$ is additive.  Indeed, for abelian groups $B$ and $D$, the polarisation of $H$ is $\delta H(B,D) = H^1(\bZ/2\bZ,(B\ot D)\oplus(D\ot B))$.  Observe that $(B\ot D)\oplus(D\ot B)$ is the induced module ${Ind}_{0}^{\bZ/2\bZ}(B\oplus D)$ from the trivial subgroup.  Shapiro's lemma implies that 
$$H^1(\bZ/2\bZ,(B\ot D)\oplus(D\ot B))={H^1}{\left(\bZ/2\bZ,{Ind}_{0}^{\bZ/2\bZ}(B\oplus D)\right)}\simeq H^1(0,B\oplus D)=0\ ,$$ 
whence $H(B\oplus D)\simeq H(B)\oplus H(D)$, as claimed.\\
By the definition,
\[
H^1(\bZ/2\bZ,B^{\ot2})=\{\alpha\in B^{\ot2}\ |\tau(\alpha)=-\alpha\}/\{\beta-\tau(\beta)|\ \beta\in B^{\ot2}\}\ ,
\]
with $\tau$ acting as the transposition of tensor factors.  
Define a map $B_2\to H^1(\bZ/2\bZ,B^{\ot2})$ by $b\mapsto b\otimes b$ (note that the condition $2b=0$ implies that $\tau(b\ot b)=-b\ot b$).  This map is an isomorphism.  Indeed, by additivity it suffices to see that $B_2\to H^1(\bZ/2\bZ,B^{\ot2})$ is an isomorphism for a cyclic $B$.  It is obvious for $B$ of odd order, since $H^1(\bZ/2\bZ,B^{\ot 2}) = 0$ in that case.  Note that $\tau$ is identity on $B^{\ot 2}$ for a cyclic $B$. Thus for $B = \bZ/2^\ell\bZ$ we have $Z^1(\bZ/2\bZ,(\bZ/2^\ell\bZ)^{\ot2})=(\bZ/2^\ell\bZ)_2$ and $B^1(\bZ/2\bZ,(\bZ/2^\ell\bZ)^{\ot2})=0$.
Hence, we have a functorial isomorphism
$$H(B) \simeq B_2\ .$$ 

%%%%%%%%%%%%%%%%%%%%%%%%%%%%%%%%%%%%%%%%%%%%%%%%%%%%%%%%%%
%Void begins HERE.
\void{
Thus for $B$ of odd order, $H(B)=H^1(\bZ/2\bZ,B^{\ot 2}) = 0$.
For $B = \bZ/2^\ell\bZ$, the transposition $\tau$ acts trivially on $(\bZ/2^\ell\bZ)^{\ot2}$. Hence $Z^1(\bZ/2\bZ,(\bZ/2^\ell\bZ)^{\ot2})=\bZ/2\bZ$ and $B^1(\bZ/2\bZ,(\bZ/2^\ell\bZ)^{\ot2})=0$.  Thus $H(\bZ/2^\ell\bZ) = \bZ/2\bZ$.\\
Now we compute the effect of $H$ on morphisms.  By additivity, it suffices to consider the effect of $H$ on cyclic groups.  Consider a proper epimorphism $f:\bZ/2^\ell\bZ\to\bZ/2^m\bZ$.  Up to an automorphism of the target, $f$ has the form $f([n]_{2^\ell})=[n]_{2^m}$.  
Note that $f^{\ot 2}$ becomes $f$ upon the identification $C\ot C\simeq C$ for a cyclic $C$.
It follows that $H(f){\left([1]_2\right)}={\left[2^{\ell-1}\right]}_{2^m}$.  But since $\ell-1\geq m$, it follows that $2^m\mid 2^{\ell-1}$, whence $H(f){\left([1]_2\right)}=0$.  Thus $H$ takes proper epimorphisms to zero.  Now consider a proper monomorphism $g:\bZ/2^\ell\bZ\to\bZ/2^m\bZ$ for some $m>\ell$.  Up to an automorphism of the target, $g$ has the form $g{\left([1]_{2^\ell}\right)}={\left[2^{m-\ell}\right]}_{2^m}$.  It follows that $H(g){\left([1]_2\right)}={\left[2^{m-1}\right]}_{2^m}$.  Thus $H$ takes proper monomorphisms to proper monomorphisms.
}%Void ends HERE.
%%%%%%%%%%%%%%%%%%%%%%%%%%%%%%%%%%%%%%%%%%%%%%%%%%%%%%%%%%
\end{example}

\begin{example}\lb{J}
Consider the exact sequence
\begin{equation}\lb{seqB}
Hom_\bZ(B\ot B,\bQ/\bZ)\longrightarrow Q(B,\bQ/\bZ)\longrightarrow J(B)\longrightarrow 0
\end{equation}
The functor $J$ is the cokernel of the map ${Hom_{\bZ}}{\left(B^{\ot2},\bQ/\bZ\right)}\to Q(B,\bQ/\bZ)$ sending a bilinear form $\varsigma:B\times B\to\bQ/\bZ$ to the quadratic form $q(x)=\varsigma(x,x)$ for $x\in B$.\\
The functor $J$ is additive.  Indeed, part \ref{b.12} of proposition \ref{b.one} implies that the sequence
\[
{\delta Hom_{\bZ}}{\left((-)^{\ot2},\bQ/\bZ\right)}\longrightarrow {\delta Q}(-,\bQ/\bZ)\longrightarrow{\delta J}(-)\longrightarrow 0
\]
is exact. The morphism  ${\delta Hom_{\bZ}}{\left((-)^{\ot2},\bQ/\bZ\right)}\to {\delta Q}(-,\bQ/\bZ)$ between polarisations is surjective. Thus ${\delta J}=0$.\\
  Furthermore, $J(B)=0$ for $B$ of odd order.  It can be shown that $Q(\bZ/2^\ell\bZ,\bQ/\bZ)\simeq \bZ/2^{\ell+1}\bZ$ with a generator $q_\ell(x)=\frac{x^2}{2^{\ell+1}}$. Then \eqref{seqB} becomes
$$\bZ/2^\ell\bZ\hookrightarrow\bZ/2^{\ell+1}\bZ\twoheadrightarrow J(\bZ/2^\ell\bZ)\ ,$$
whence $J(\bZ/2^\ell\bZ)\simeq\bZ/2\bZ$ as an abelian group.\\
Now we compute the effect of $J$ on morphisms. Obviously $J$ takes isomorphisms between cyclic $2$-groups to the identity.  
Consider a proper epimorphism of cyclic groups $f:\bZ/2^\ell\bZ\to\bZ/2^m\bZ$.  Up to an automorphism of the target, $f$ has the form $f{\left([1]_\ell\right)}=[1]_m$.  
The induced map $f^*:Q(\bZ/2^m\bZ,\bQ/\bZ)\to Q(\bZ/2^\ell\bZ,\bQ/\bZ)$ is given by ${f^*}{\left(q_m\right)}([1]_\ell)=q_m(f([1]_\ell))=\frac{1}{2^{m+1}}=\frac{2^{\ell-m}}{2^{\ell+1}}$, whence ${f^*}{\left(q_m\right)}=2^{\ell-m}q_\ell$.  It follows that the induced map between groups of quadratic forms is zero, so $J$ sends proper epimorphisms to zero.\\
Up to an automorphism of the target, a proper monomorphism $g:\bZ/2^\ell\bZ\to\bZ/2^n\bZ$ is $g{\left([1]_\ell\right)}=\left[2^{n-\ell}\right]_n$.  The induced map $g^*:Q(\bZ/2^\ell\bZ,\bQ/\bZ)\to Q(\bZ/2^n\bZ,\bQ/\bZ)$ has the form ${g^*}{\left(q_n\right)}([1]_\ell)=q_n(g([1]_\ell)={q_n}{\left({\left[2^{n-\ell}\right]}_{n}\right)}=\frac{2^{2n-2\ell}}{2^{n+1}}=\frac{2^{n-\ell}}{2^{\ell+1}}$, so that $g^*(q_n)=2^{n-\ell}q_\ell$.  The induced map ${g^*}$ is zero, so $J$ maps proper monomorphisms to zero.
\end{example}

%%%%%%%%%%%%%%%%%%%%%

%%%%%%%%%%%%

\end{document}